\newcommand{\kk}{\Bbbk}
\newcommand{\kv}{{\kk[\mathcal{V}]}}
\newcommand{\kvg}{{\kk[\mathcal{V}]^{G}}}
\newcommand{\spec}{\mathrm{Spec}}
\newcommand{\vv}{\mathcal{V}}
\newcommand{\M}{{\mathcal{M}}}
\newcommand{\bA}{{\mathbf{A}}}
\newcommand{\bZ}{{\mathbf{Z}}}
\newcommand{\ba}{{\mathbf{a}}}
\newcommand{\bb}{{\mathbf{b}}}
\newcommand{\bc}{{\mathbf{c}}}
\newcommand{\bbeta}{{\boldsymbol{\beta}}}
\newcommand{\bgamma}{{\boldsymbol{\gamma}}}
\newcommand{\cc}{\mathcal{C}}
\newcommand{\ww}{\mathcal{W}}
\def\SL{\operatorname{SL}}
\def\GL{\operatorname{GL}}
\def\Ga{{\mathbb G}_{a}}
\def\Gm{{\mathbb G}_{m}}
\def\Trdeg{\operatorname{trdeg}}
\def\chr{\operatorname{char}}
\def\Quot{\operatorname{Quot}}
\def\sgn{\operatorname{sgn}}
\def\Z{\mathbb{Z}}
\def\N{\mathbb{N}}
\def\C{\mathbb{C}}
\def\codim{\operatorname{codim}}
\def\Tr{\operatorname{Tr}}
\def\rk{\operatorname{rk}}
\newtheorem{Lemma}{Lemma}[section]
\newtheorem{thm}[Lemma]{Theorem}
\newtheorem{cor}[Lemma]{Corollary}
\newtheorem{prop}[Lemma]{Proposition}
\theoremstyle{definition}
  \newtheorem{Def}[Lemma]{Definition}  
\theoremstyle{remark}
  \newtheorem{rem}[Lemma]{Remark}
\newtheorem{eg}[Lemma]{Example}
\newtheoremstyle{Acknowledgments}
  {}
    {}
     {}
     {}
    {\bfseries}
    {}
     {.5em}
     {\thmname{#1}\thmnumber{ }\thmnote{ (#3)}}
\theoremstyle{Acknowledgments}
\newtheorem{ack}{Acknowledgments.}
\title{The separating variety for $2 \times 2$ matrix invariants}
\author{Jonathan Elmer}
\address{Middlesex University\\
The Burroughs, Hendon, London\\
NW4 4BT UK}
\email{j.elmer@mdx.ac.uk}
\date{\today}
\subjclass[2010]{13A50}
\keywords{Invariant theory,  matrix invariants, separating set, separating variety, orbit closure, conjugation}
\begin{document}

\maketitle

\begin{abstract}
Let $G$ be a linear algebraic group acting linearly on a vector space (or more generally, an affine variety) $\vv$, and let $\kvg$ be the corresponding algebra of invariant polynomial functions. A separating set $S \subseteq \kvg$ is a set of polynomials with the property that for all $v,w \in \vv$, if there exists $f \in \kvg$ separating $v$ and $w$, then there exists $f \in S$ separating $v$ and $w$.

In this article we consider the action of $G = \GL_2(\C)$ on the $\C$-vector space $\M_2^n$ of $n$-tuples of $2 \times 2$ matrices by simultaneous conjugation. Minimal generating sets $S_n$ of $\C[\M_2^n]^G$ are well-known, and $|S_n| = \frac16(n^3+11n)$. In recent work, Kaygorodov, Lopatin and Popov \cite{Lopatin2x2} showed that for all $n \geq 1$, $S_n$ is a minimal separating set by inclusion, i.e. that no proper subset of $S_n$ is a separating set. This does not necessarily mean that $S_n$ has minimum cardinality among all separating sets for $\C[\M_2^n]^G$. Our main result shows that any separating set for $\C[\M_2^n]^G$ has cardinality $\geq 5n-5$. In particular, there is no separating set of size $\dim(\C[\M_2^n]^G) = 4n-3$ for $n \geq 3$. Further, $S_3$ has indeed minimum cardinality as a separating set, but for $n \geq 4$ there may exist a smaller separating set than $S_n$. We show that a smaller separating set does in fact exist for all $n \geq 5$. We also prove similar results for the left-right action of $\SL_2(\C) \times \SL_2(\C)$ on $\M_2^n$.
\end{abstract}

\section{Introduction}
\subsection{Matrix Invariants}
Let $\kk$ be an infinite field and let $\M_d$ denote the space of $d \times d$ matrices with coefficients in $\kk$. The linear algebraic group $G: = \GL_d(\kk)$ acts on $\M_d$ by conjugation. More generally we can consider the action of $G$ on $\M_d^n$ by simultaneous conjugation.

The elements of $\M_d^n$ can be viewed as $n$-tuples $\bA = (A_1,A_2, \ldots, A_n)$, or as $d \times d$ matrices with entries in $\kk^n$. We call these $n$-matrices for short. For $g \in G$ we write the conjugation action as
\[g \cdot \bA:= (gA_1g^{-1}, \ldots, gA_ng^{-1}).\] The question of determining whether a pair of matrices lie in the same $G$-orbit is a staple of undergraduate linear algebra. On the other hand, the question of determining whether a pair of $n$-matrices lie in the same $G$-orbit for $n \geq 2$ is the archetypal ``wild'' problem, see for example \cite{Belitskii}. 

There is an action of $\GL_n(\kk)$ on $\M_d^n$ which commutes with the conjugation action: namely, for $h \in \GL_n(\kk)$ and an $n$-matrix $\bA = ({\bf a}_{ij}) \in \M_d(\kk^n)$ we write $h \star \bA$ for the $n$-matrix whose $i,j$ entry is
\begin{equation}\label{commuting action} (h \star \bA)_{ij} = h({\bf a}_{ij}). \end{equation}

Now for $1 \leq i,j \leq d$ and $1 \leq k \leq n$, let $x_{ij}^{(k)}$ denote the linear functional $\M_d^n \rightarrow \kk$ which picks out the $i,j$th entry of of $A_k$, and introduce generic matrices
\[X_k:= \begin{pmatrix} x_{11}^{(k)} & x_{12}^{(k)} \ldots & x_{1d}^{(k)} \\ x_{21}^{(k)} & x_{22}^{(k)} \ldots & x_{2d}^{(k)} \\
\vdots & \vdots & \vdots \\ x_{d1}^{(k)} & x_{d2}^{(k)} \ldots & x_{dd}^{(k)} 
 \end{pmatrix}.\]
Then we have \[\kk[\M_d^n] = \kk[x_{ij}^{(k)}: i,j = 1, \ldots, d, k = 1, \ldots, n].\]

The action of $G$ on $\M_d^n$ induces an action of $G$ on $\kk[\M_d^n]$ by algebra automorphisms: we define
\[(g \cdot f)(\bA) = f(g^{-1} \cdot \bA)\] for all $g \in G$, $f \in \kk[\M_d^n]$ and $\bA \in \M_d^n$. The set $\kk[\M_d^n]^G$ of fixed points of this action forms a $\kk$-subalgebra. Elements of $\kk[\M_d^n]^G$ are called matrix invariants. The algebra $\C[\M_d^n]^G$ has been intensely studied over the years. A minimal generating set is known for arbitrary $n$ only in the cases $d \leq 2$. We ignore $d=1$ as in this case $G$ acts trivially. For $d=2$ we have the following result \cite{LeBruynProcesi}:

\begin{prop}\label{lbp}[LeBruyn-Procesi] The following set $S_n$ of invariants minimally generates $\C[\M^n_2]^G$ as an algebra:
\begin{itemize}
\item $\Tr(X_i), i=1, \ldots, n$.
\item $\det(X_i), i = 1,\ldots n$.
\item $\Tr(X_iX_j), 1 \leq i<j \leq n$.
\item $\Tr(X_iX_jX_k), 1 \leq i<j<k \leq n$.
\end{itemize}
\end{prop}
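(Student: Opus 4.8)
The plan is to establish the two halves of the statement separately: that $S_n$ generates $\C[\M_2^n]^G$, which is the classical First Fundamental Theorem specialised to $2\times 2$ matrices, and that it does so minimally, which reduces to an indecomposability computation in a handful of multidegrees. For generation I would start from the theorem of Procesi and Sibirskii (valid here since $\chr \C = 0$) that $\C[\M_d^n]^{\GL_d}$ is generated by the traces $\Tr(X_{i_1}\cdots X_{i_m})$ of all words in the generic matrices, and then cut this list down in the case $d=2$ using the Cayley--Hamilton identity $X_i^2 = \Tr(X_i)X_i - \det(X_i)I$, its polarisation $X_iX_j + X_jX_i = \Tr(X_i)X_j + \Tr(X_j)X_i + \big(\Tr(X_iX_j) - \Tr(X_i)\Tr(X_j)\big)I$, and the relation $\det(X_i) = \tfrac12\big(\Tr(X_i)^2 - \Tr(X_i^2)\big)$. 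Multiplying the polarisation by a third generic matrix and taking traces shows that $\Tr(X_iX_jX_k) + \Tr(X_iX_kX_j)$ lies in the subalgebra generated by traces of shorter words; a repeated application handles all words of length $\geq 4$, and a length-$3$ word with a repeated index, e.g. $\Tr(X_i^2X_j)=\Tr(X_i)\Tr(X_iX_j)-\det(X_i)\Tr(X_j)$, is decomposable directly from Cayley--Hamilton. What survives is exactly the list $S_n$ (using $\det(X_i)$ in place of $\Tr(X_i^2)$, and keeping only $i<j<k$ among the cubic words), so $S_n$ generates.

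For minimality I would invoke the graded Nakayama lemma: with $R = \C[\M_2^n]^G$ carrying its natural $\N^n$-grading by matrix colours and $R_+$ its irrelevant ideal, a generating set is minimal exactly when the residues of its elements form a $\C$-basis of $R_+/R_+^2$; equivalently, every generator must be indecomposable (not lie in $R_+^2$) and generators of equal multidegree must have linearly independent residues. Since $S_n$ already generates, the space of invariants in a fixed multidegree $\mu$ is spanned by monomials in the elements of $S_n$ of multidegree $\mu$, so I can write down the decomposable part explicitly in each multidegree of interest and test indecomposability by a single well-chosen evaluation: in multidegree $e_i$ there are no decomposables, so $\Tr(X_i)$ is indecomposable; in multidegree $2e_i$ the only decomposable is $\Tr(X_i)^2$, and evaluating at $X_i = \diag(1,-1)$ with all other matrices zero gives $\Tr(X_i)=0$ but $\det(X_i)=-1$; in multidegree $e_i+e_j$ the only decomposable is $\Tr(X_i)\Tr(X_j)$, and evaluating at $X_i=\begin{pmatrix}0&1\\0&0\end{pmatrix}$, $X_j=\begin{pmatrix}0&0\\1&0\end{pmatrix}$ (others zero) gives $\Tr(X_i)=\Tr(X_j)=0$, $\Tr(X_iX_j)=1$.

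The cubic generators are where I expect the only real work to lie. In multidegree $e_i+e_j+e_k$ with $i<j<k$ the invariants are spanned, since $S_n$ generates, by $\Tr(X_iX_jX_k)$ together with the decomposables $\Tr(X_i)\Tr(X_j)\Tr(X_k)$, $\Tr(X_i)\Tr(X_jX_k)$, $\Tr(X_j)\Tr(X_iX_k)$ and $\Tr(X_k)\Tr(X_iX_j)$; hence $R_+/R_+^2$ in this multidegree is spanned by the residue of $\Tr(X_iX_jX_k)$, and I must only show this residue is nonzero, i.e. that $\Tr(X_iX_jX_k)$ is not a linear combination of those four products. For this I would evaluate at the real form of the Pauli matrices $X_i=\begin{pmatrix}0&1\\1&0\end{pmatrix}$, $X_j=\begin{pmatrix}0&-1\\1&0\end{pmatrix}$, $X_k=\begin{pmatrix}1&0\\0&-1\end{pmatrix}$ with all other matrices zero: then $\Tr(X_i)=\Tr(X_j)=\Tr(X_k)=0$ and $\Tr(X_iX_j)=\Tr(X_iX_k)=\Tr(X_jX_k)=0$, so every decomposable vanishes, whereas $X_iX_jX_k=I$ and $\Tr(X_iX_jX_k)=2\neq 0$. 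Distinct triples lie in distinct multidegrees, so their residues are automatically independent of one another and of the residues of the linear and quadratic generators; by the graded Nakayama lemma the residues of all elements of $S_n$ thus form a basis of $R_+/R_+^2$, and $S_n$ is a minimal generating set. The one point demanding care is the bookkeeping in the generation step — confirming that \emph{only} traces of words of length $\leq 3$ with distinct indices survive the reductions — but this is routine given the three identities above.
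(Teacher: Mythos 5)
The paper does not actually prove this result; Proposition~\ref{lbp} is stated with a citation to Le Bruyn--Procesi \cite{LeBruynProcesi} and no argument is given, so there is nothing in the text to compare against line by line. Your proposal is a correct and essentially complete proof along classical lines, and the two halves are the right ones. The generation step is the Procesi/Sibirskii first fundamental theorem plus Cayley--Hamilton reduction, and your three identities are the right tools; the point you flag as ``routine but needing care'' --- that traces of words of length $\geq 4$ and of length-$3$ words with repeats reduce --- does indeed go through, and a clean way to see the length-$4$ case is to pass to the trace-zero parts (write $X_i = X_i^0 + \tfrac12\Tr(X_i)I$), where the polarised Cayley--Hamilton identity becomes $AB + BA = \Tr(AB)I$; chaining this with cyclicity of the trace gives $2\Tr(ABCD) = \Tr(AB)\Tr(CD) - \Tr(AC)\Tr(BD) + \Tr(AD)\Tr(BC)$, and then an induction on word length finishes the reduction. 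Your minimality argument via the graded Nakayama lemma and one explicit evaluation per multidegree is correct; I checked all four evaluations and they give what you claim (in particular, in multidegree $e_i+e_j+e_k$ the four decomposables all vanish at your Pauli-type triple while $\Tr(X_iX_jX_k)=2$). Le Bruyn and Procesi themselves arrive at this generating set and its minimality via a more representation-theoretic route (graded characters and Poincar\'e series for $\SL_2$), so your approach is more elementary and self-contained, at the cost of the combinatorial bookkeeping in the reduction step.
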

 
\subsection{Separating Invariants}

Now consider a more general situation in which a linear algebraic group $G$ defined over $\kk$ acts linearly on an affine $\kk$-variety $\vv$. Let $\kv$ denote the algebra of polynomial functions on $\vv$. Then $G$ acts on $\kvg$ according to the formula
\begin{equation}\label{action} (g \cdot f) (v) = f(g^{-1} \cdot v).\end{equation} We denote by $\kvg$ the subalgebra of $\kv$ fixed by this action. For $v, w \in \vv$ and any $f \in \kvg$ we have in $f(v) = f(w)$ if $v \in Gw$, but the converse is not true in general.  

\begin{eg}\label{nonsep}
 Let $G = \GL_2(\C)$ as in Section 1.1. It's well known that
\[\C[\M^1_2]^G = \C[\Tr(X_1),\det(X_1)].\]
However, the matrices
\[A = \begin{pmatrix} 0 & 1 \\ 0 & 0 \end{pmatrix}, A' = \begin{pmatrix} 0 & 0 \\ 0 & 0 \end{pmatrix}\]
do not lie in the same orbit ($G$ fixes $A'$) and we have $\Tr(A) =\Tr(A') = 0$ and $\det(A) = \det(A') = 0$.
\end{eg}

If $f \in \kvg$ and $f(v) \neq f(w)$ we say that $f$ {\it separates} $v$ and $w$. We say that $v$ and $w$ are {\it separated by invariants} if there exists an invariant separating $v$ and $w$.  In case $G$ is reductive, we have that $f(v) = f(w)$ for all $f \in \kvg$ if and only if $\overline{Gv} \cap \overline{Gw} \neq \emptyset$ where the bar denotes closure in the Zariski  topology, see  \cite[Corollary~6.1]{Dolgachev} (note that in Example \ref{nonsep} above we have $A' \in \overline{GA}$ and $G$ is reductive). In particular, the invariants separate the orbits if $G$ is a finite group.

One can in principle separate orbits whenever one can find an explicit generating set for $\kvg$, but this is an extremely difficult problem in general. For this reason, Derksen and Kemper introduced the following in 2002 \cite[Definition~2.3.8]{DerksenKemper}:

\begin{Def}\label{sepdef}
Let $S \subseteq \kvg$. We say $S$ is a {\it separating set} for $\kvg$ if the following holds for all $v, w \in \vv$:
\begin{equation*} s(v) = s(w) \ \text{for all} \ s \in S \Leftrightarrow f(v) = f(w)  \ \text{for all} \ f \in \kvg. \end{equation*}
\end{Def}
Separating sets of invariants have been an area of much recent interest. In general they have nicer properties and are easier to construct than generating sets. For example, if $G$ is a finite group acting on a vector space $V$, then the set of invariants of degree $\leq |G|$ is a separating set \cite[Theorem~3.9.14]{DerksenKemper}. This is also true for generating invariants if $\chr(\kk) = 0$ \cite{FleischmannNoetherBound}, \cite{FogartyNoetherBound} but fails for generating invariants in the modular case. Separating sets for the rings of invariants $\kk[V]^{C_p}$, where $\kk$ is a field of characteristic $p$ and $C_p$ the cyclic group of order $p$ and $V$ is indecomposable were constructed in \cite{SezerCyclic}. Corresponding sets of generating invariants are known only when $\dim(V) \leq 10$ \cite{WehlauCyclicViaClassical}. For the (non-reductive) linear algebraic group $\Ga$ of a field of characteristic zero, separating sets for $\kk[V]^{\Ga}$ for arbitary indecomposable linear representations $V$ were constructed in \cite{ElmerKohls}. These results were extended to decomposable representations in \cite{DufresneElmerSezer}. Even for indecomposable representations, generating sets are known only where $\dim(V) \leq 8$ \cite{Bedratyuk7}.  Finally, for an arbitrary (i.e. non-linear) $\Ga$-variety $\vv$, the algebra of invariants $\kk[\vv]^{\Ga}$ may not be finitely generated, but it is known that there must exist a finite separating set \cite{KemperSeparating} and finite separating sets have been constructed for many examples where $\kk[\vv]^{\Ga}$ is infinitely generated \cite{DufresneKohlsFiniteSep, DufresneKohlsSepVar}. 

Let $S$ be a separating set for $\kvg$ consisting of homogeneous polynomials. The subalgebra $\kk[S]$ of $\kvg$ generated by $S$ is called a {\it separating algebra}. By \cite[Proposition~3.2.3]{DufresnePhD}, the quotient fields of $\kk[S]$ and $\kvg$ have the same transcendence degree over $\kk$. Then by \cite[Proposition~2.3(b)]{Giral} we get that $\dim(\kk[S]) = \dim(\kvg)$. Consequently, the size of a separating set is bounded below by the dimension of $\kvg$. A separating set whose size equals the dimension of $\kvg$ is sometimes called a polynomial separating set, because it necessarily generates a polynomial subalgebra of $\kvg$. On the other hand, there always exists a separating set of size $\leq 2\dim(\kvg)+1$, albeit such a separating set may necessarily contain non-homogeneous polynomials; see \cite[Theorem~5.3]{KamkeKemper} for a proof.

\subsection{Statement of results}

We return to the notation of Section 1.1. Recently, Kaygorodov, Lopatin and Popov \cite{Lopatin2x2} showed that $S_n$ is a minimal separating set for $\C[\M_2^n]^G$ by inclusion - i.e. no proper subset of $S_n$ is a separating set. (The authors also considered the problem over fields of finite characteristic, but we will not). Note that this does not necessarily mean that $S_n$ has minimal cardinality as a separating set. Our main results are as follows:

\begin{thm}\label{mainthm} Let $n \geq 2$ and suppose that $S \subseteq \C[\M_2^n]^G$ is a separating set. Then $|S| \geq 5n-5$.
\end{thm}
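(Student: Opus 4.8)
The plan is to study the \emph{separating variety}
\[
 \mathcal{S} \;=\; \bigl\{(v,w)\in\M_2^n\times\M_2^n \;:\; f(v)=f(w)\ \text{for all}\ f\in\C[\M_2^n]^G\bigr\}.
\]
By Definition~\ref{sepdef}, a set $S\subseteq\C[\M_2^n]^G$ of homogeneous invariants is separating precisely when $\mathcal{S}$ is the zero locus on $\M_2^n\times\M_2^n$ of the $|S|$ polynomials $s\otimes 1-1\otimes s$, $s\in S$. Hence, by Krull's principal ideal theorem, every irreducible component $Z$ of $\mathcal{S}$ satisfies $\codim(Z)\le|S|$, and it suffices to produce one irreducible component of $\mathcal{S}$ of codimension at least $5n-5$, i.e.\ of dimension at most $3n+5$.

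The first step is to describe $\mathcal{S}$ geometrically. Since $G=\GL_2(\C)$ is reductive, $(v,w)\in\mathcal{S}$ if and only if $\overline{Gv}\cap\overline{Gw}\neq\emptyset$, equivalently $v$ and $w$ have the same semisimplification. For a tuple of $2\times 2$ matrices the semisimplification falls into one of three strata: scalar tuples $(a_iI)_i$; the $2$-dimensional ``diagonal-type'' orbits of $(\diag(a_i,b_i))_i$ with $(a_i)\neq(b_i)$; and the $3$-dimensional orbits of irreducible tuples. If $v$ is irreducible then its orbit is closed, so $(v,w)\in\mathcal{S}$ forces $w\in Gv$; all such pairs lie in the closure $\mathcal{Z}$ of $\{(v,gv):v\in\M_2^n,\ g\in G\}$, which is irreducible of dimension $4n+3$ and, since nothing in $\mathcal{S}$ has larger dimension, is an irreducible component of $\mathcal{S}$ of codimension $4n-3$. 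This already proves Theorem~\ref{mainthm} for $n=2$. For $n\ge 3$ one has $4n-3<5n-5$, so the extra information must come from a second component, necessarily supported on the locus where both $v$ and $w$ are reducible with a common diagonal-type semisimplification.

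The core of the argument is to pin down this second component $\cc$ and compute its dimension. A pair in this locus is described by the common semisimplification datum $(a_i),(b_i)$, by a choice of common invariant line for each of $v$ and $w$, and by off-diagonal (extension) data producing the actual tuples inside the relevant fibres of the quotient map; the condition that isolates $\cc$ from the part of this locus that is absorbed into $\mathcal{Z}$ is a linear relation coupling the two extension classes. With that relation imposed the total parameter count is $3n+5$, so $\codim(\cc)=5n-5$. For $n=3$ the relation is vacuous and $\cc$ is the entire ``both reducible, same semisimplification'' locus; together with $|S_3|=\tfrac16(27+33)=10=5\cdot 3-5$ this gives the sharpness asserted in the introduction.

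The main obstacle — and the bulk of the work — is showing that $\cc$ is genuinely an irreducible \emph{component} of $\mathcal{S}$, not merely a closed subset of one. Irreducibility is immediate, as $\cc$ is the closure of the image of an irreducible parameter space. What requires care is proving, first, that a generic point of $\cc$ has a neighbourhood inside $\mathcal{S}$ contained in $\cc$, and second, that a generic point of $\cc$ is \emph{not} a limit of pairs $(u,gu)$ lying in a common $G$-orbit (nor of pairs from the other strata), so that $\cc\not\subseteq\mathcal{Z}$ and $\cc$ is maximal. This rests on a careful analysis of orbit closures and one-parameter degenerations of tuples of $2\times 2$ matrices: along a degeneration one tracks which $1$-dimensional submodule survives and how its complement twists, and one shows that the limiting pairs of extension classes obtainable through a common orbit are constrained — the same constraint that defines $\cc$ and keeps it off $\mathcal{Z}$. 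Once $\cc$ is known to be a component, the dimension count finishes the proof; the remaining assertions of the introduction follow by comparing $5n-5$ with $\dim\C[\M_2^n]^G=4n-3$ and with $|S_n|$.
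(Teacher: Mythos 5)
The proposal correctly identifies the general framework --- pass to the separating variety and use its geometry to bound $|S|$ from below --- and the decomposition into the ``graph'' component and a second locus of pairs with common reducible semisimplification. But the central claim, that there is an irreducible component of $\mathcal{S}$ of dimension $3n+5$ (codimension $5n-5$), is false, and the proposal's reliance on Krull's principal ideal theorem alone cannot reach the stated bound.

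Here is the difficulty. Working in $\M_2^n$, the graph component $\mathcal{Z}=\overline{\Gamma}$ has dimension $4n+3$ (codimension $4n-3$), and the second component --- the closure of the locus of pairs of simultaneously triangularizable tuples with matching diagonal data --- has dimension $4n+2$ (codimension $4n-2$). These are the \emph{only} irreducible components of $\mathcal{S}$, so the best Krull's theorem can give is $|S|\ge 4n-2$. For $n=3$ this coincides with $5n-5=10$, but for $n\ge 4$ one has $4n-2<5n-5$, so no choice of component closes the gap. Your parameter count $3n+5$ for $\cc$ appears to land on the \emph{intersection} of the two components (which the paper computes to have dimension $2n+4$ in trace-free coordinates, hence $3n+4$ in $\M_2^n$-coordinates), not on a component; a proper closed subset of a component can never itself be a component, so Krull applied to it gives nothing. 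The ``linear relation coupling the two extension classes'' you invoke is what \emph{excises} points lying in $\mathcal{Z}$, but imposing it does not produce a new component of $\mathcal{S}$, it merely describes the overlap.

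The missing ingredient is Grothendieck's connectedness theorem. The paper shows (Proposition~\ref{grothbound}, via Reimers' reduction from the completion) that if $\mathcal{S}$ fails to be connected in codimension $k$, then $|S|\ge\dim\kvg+k$. Because the two components of $\mathcal{S}_{G,\vv^n}$ have dimensions $3n+3$ and $3n+2$ but meet only in a set of dimension $2n+4$, removing that intersection disconnects $\mathcal{S}_{G,\vv^n}$; this shows $\mathcal{S}_{G,\vv^n}$ is not connected in codimension $n-2$, yielding $|S|\ge(3n-3)+(n-2)=4n-5$ for $\C[\vv^n]^G$ and hence $5n-5$ for $\C[\M_2^n]^G$. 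Without this connectedness argument --- and without the nontrivial computation of the intersection dimension (Proposition~\ref{intersect}, Lemma~\ref{closureggc}, Corollary~\ref{dimintersect}) --- the proof cannot be completed along the lines you suggest.
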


We will see in section two that $\dim(\C[\M_2^n]^G ) = 4n-3$ for $n \geq 2$, and that $\dim(\C[M_2]^G =1)$.   Thus, the previous theorem implies

\begin{cor}\label{maincor} Let $n \geq 3$. Then there does not exist a polynomial separating set for $\C[\M_2^n]^G$.
\end{cor}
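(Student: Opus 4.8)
The plan is to derive this immediately from Theorem~\ref{mainthm}, combined with the dimension computation $\dim(\C[\M_2^n]^G) = 4n-3$ promised for Section~2 and the general principle, recalled in the introduction, that a polynomial separating set has cardinality exactly $\dim(\kvg)$.

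First I would assemble the relevant facts. By \cite[Proposition~3.2.3]{DufresnePhD} together with \cite[Proposition~2.3(b)]{Giral}, any separating algebra $\C[S]$ for $\C[\M_2^n]^G$ satisfies $\dim(\C[S]) = \dim(\C[\M_2^n]^G)$; hence every homogeneous separating set $S$ has $|S| \geq \dim(\C[\M_2^n]^G)$, and a \emph{polynomial} separating set is precisely one for which equality holds (equivalently, one generating a polynomial subalgebra). So a polynomial separating set for $\C[\M_2^n]^G$ would have cardinality exactly $4n-3$ once the dimension count of Section~2 is in hand.

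Next I would invoke Theorem~\ref{mainthm}, which asserts that every separating set $S \subseteq \C[\M_2^n]^G$ has $|S| \geq 5n-5$. A polynomial separating set of size $4n-3$ would then force $4n-3 \geq 5n-5$, i.e.\ $n \leq 2$. Therefore, for $n \geq 3$, no polynomial separating set can exist, which is the assertion of the corollary.

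There is essentially no obstacle here beyond the proof of Theorem~\ref{mainthm} itself and the verification of $\dim(\C[\M_2^n]^G)=4n-3$: the corollary reduces to the elementary comparison $5n-5 > 4n-3 \iff n \geq 3$. The only point needing minor care is to state explicitly the convention that a polynomial separating set is one whose cardinality equals the dimension of the invariant ring, and to cite the standard fact (already recorded above) that this dimension coincides with that of the ambient invariant algebra.
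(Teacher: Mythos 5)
Your argument is correct and matches the paper's own reasoning exactly: with $\dim(\C[\M_2^n]^G) = 4n-3$ established in Section~2 and the convention that a polynomial separating set has cardinality equal to this dimension, Theorem~\ref{mainthm} gives $|S| \geq 5n-5 > 4n-3$ for $n \geq 3$, ruling such a set out. Nothing to add.
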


The cardinality of $S_n$ is $\frac16(n^3+11n).$  For $n \geq 7$ this exceeds the upper bound $2 \dim(\C[\M_2^n]^G)+1 = 8n-5$. So $S_n$ does not have minimal cardinality for $n \geq 7$. We also prove

\begin{thm}\label{mainthm2} Let $n \geq 3$. Then there exists a separating set, $S'_n$ for $\C[\M_2^n]^G$ with cardinality $\frac12 (n^2 + 9n- 16)$. $S'_n$ consists of homogeneous polynomials.
\end{thm}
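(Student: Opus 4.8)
The plan is to retain the first three families of LeBruyn--Procesi generators and to replace the many triple traces by a small, carefully chosen family of cubic invariants. Concretely, I would take
\[
  S'_n \;=\; \{\Tr(X_i),\ \det(X_i)\,:\,1\le i\le n\}\,\cup\,\{\Tr(X_iX_j)\,:\,1\le i<j\le n\}\,\cup\,\{h_1,\dots,h_{3n-8}\},
\]
where $h_\ell=\Tr(L_\ell M_\ell N_\ell)$ for suitable linear forms $L_\ell,M_\ell,N_\ell\in\operatorname{span}_{\C}(X_1,\dots,X_n)$; this is a set of homogeneous invariants of cardinality $2n+\binom n2+(3n-8)=\tfrac12(n^2+9n-16)$. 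By Proposition~\ref{lbp} and reductivity of $\GL_2(\C)$, a pair $\bA,\mathbf{B}\in\M_2^n$ is separated by $\C[\M_2^n]^G$ if and only if some LeBruyn--Procesi generator separates it; since the first three families already lie in $S'_n$, it remains to show that if all members of $S'_n$ agree on $\bA$ and $\mathbf{B}$, then every triple trace $\Tr(X_iX_jX_k)$ agrees on them as well.

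Then I would pass to the $\GL_2$-equivariant splitting $X_i=\tfrac12\Tr(X_i)I+Y_i$, $Y_i\in\mathfrak{sl}_2(\C)$, with $Z_i$ defined similarly for $\mathbf{B}$. Agreement on $\Tr(X_i),\det(X_i),\Tr(X_iX_j)$ is equivalent to the traces agreeing and the Gram matrices $\bigl(\Tr(Y_iY_j)\bigr)$ and $\bigl(\Tr(Z_iZ_j)\bigr)$ coinciding. Cayley--Hamilton gives $YW+WY=\Tr(YW)I$ on $\mathfrak{sl}_2$, so $\Tr(YWV)$ is alternating; the trace form is non-degenerate on $\mathfrak{sl}_2\cong\C^3$, and there is the identity
\[
  \Tr(Y_iY_jY_k)\,\Tr(Y_aY_bY_c)\;=\;-\tfrac12\det\bigl(\Tr(Y_pY_q)\bigr)_{p\in\{i,j,k\},\,q\in\{a,b,c\}}.
\]
Hence each $\Tr(X_iX_jX_k)$ is a fixed polynomial in the traces, determinants and pair traces plus $\Tr(Y_iY_jY_k)$, and it suffices to prove $\Tr(Y_iY_jY_k)=\Tr(Z_iZ_jZ_k)$ for all $i,j,k$. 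If the common Gram matrix has rank $\le 2$, then $\{Y_i\}$ and $\{Z_i\}$ each span a subspace of $\mathfrak{sl}_2$ of dimension $\le 2$, so all these triple traces vanish. If it has rank $3$, the displayed identity forces $\Tr(Y_iY_jY_k)=\varepsilon\Tr(Z_iZ_jZ_k)$ for all $i,j,k$ with one global sign $\varepsilon=\pm1$; equivalently, up to $\GL_2(\C)$-conjugacy $\mathbf{B}$ is obtained from $\bA$ by the reflection $Y_i\mapsto-Y_i$, and this changes the orbit exactly when $(Y_i)$ spans $\mathfrak{sl}_2$. So everything reduces to: the $h_\ell$ must detect this reflection on every spanning configuration.

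This last point is the heart of the argument and the step I expect to be the main obstacle. Under the splitting, the ``genuinely cubic'' part of $h_\ell$ becomes $\Tr(u_\ell v_\ell w_\ell)$ for linear combinations $u_\ell,v_\ell,w_\ell$ of the $Y_i$, and this changes sign under $Y_i\mapsto -Y_i$ while the rest of $h_\ell$ is already pinned down by traces and pair traces; thus in the case $\varepsilon=-1$ the previous agreements force $\Tr(u_\ell v_\ell w_\ell)=0$ for all $\ell$ on $\bA$. It therefore suffices to choose $L_\ell,M_\ell,N_\ell$ so that on the configuration space $\mathfrak{sl}_2^n$ the common zero locus of the forms $(Y_i)\mapsto\Tr(u_\ell v_\ell w_\ell)$ contains no spanning configuration; equivalently, for every spanning $(Y_i)$ some triple $(u_\ell,v_\ell,w_\ell)$ is a basis of $\mathfrak{sl}_2$. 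The non-spanning configurations form the determinantal subvariety $\{(Y_i):\rk\{Y_i\}\le 2\}$, of codimension $n-2$, so the count $3n-8\ge n-2$ leaves room in principle; the real work is an explicit construction of the $L_\ell,M_\ell,N_\ell$ and its verification on the most degenerate spanning configurations (many $A_i$ scalar, or proportional $\mathfrak{sl}_2$-parts), where only a restricted family of triples spans, together with the small cases fixing the exact number $3n-8$. I would also point out why the cheaper option of simply keeping $3n-8$ individual triple traces fails once $n$ is large: detecting the reflection on the configuration in which all but two of the $Y_i$ are proportional would force the kept triples to contain a triple through every pair $\{i,j\}$, which is impossible once $\binom n2/3>3n-8$. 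Finally one records that $S'_n$ is homogeneous with $|S'_n|=\tfrac12(n^2+9n-16)$.
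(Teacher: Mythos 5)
Your reduction is the same one the paper makes: via the determinantal identity expressing $t_{ijk}t_{pqr}$ in terms of the Gram matrix of pair-traces, the whole problem collapses to choosing a family $F$ of linear combinations of the triple traces whose common zero locus on $\mathfrak{sl}_2^n$ is exactly the non-spanning (rank $\le 2$) configurations. Equivalently, viewing the triple traces as the maximal minors of a generic $3\times n$ matrix, one needs a set of $3n-8$ linear combinations of those minors generating the determinantal ideal up to radical. You arrive at exactly this reformulation (your ``common zero locus of the forms $\Tr(u_\ell v_\ell w_\ell)$ contains no spanning configuration''), and you observe that the codimension count $3n-8 \ge n-2$ makes it plausible. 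But you then stop, stating that ``the real work is an explicit construction of the $L_\ell,M_\ell,N_\ell$ and its verification.'' That is indeed the whole content of the theorem, and your proposal does not provide it. The paper supplies this missing piece by citing Bruns--Vetter (Lemma 5.9, Proposition 5.20, Corollary 5.21, Proposition 5.22 of \cite{BrunsVetter}), which establish that the arithmetic rank of the ideal of $3\times3$ minors of a generic $3\times n$ matrix is exactly $3n-8$ and give an explicit generating family up to radical. Without this (or an equivalent explicit construction), the proposal is not a proof.

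There is a secondary issue: you restrict your cubic invariants to the decomposable form $h_\ell=\Tr(L_\ell M_\ell N_\ell)$. Under the identification $\bigoplus_{i<j<k}\C\cdot t_{ijk}\cong\Lambda^3\C^n$, such an $h_\ell$ corresponds to the decomposable 3-vector $L_\ell\wedge M_\ell\wedge N_\ell$, whereas the Bruns--Vetter generators up to radical are \emph{sums} of several minors and are generically not decomposable. The paper works with arbitrary elements of the span of the $t_{ijk}$, which is what Bruns--Vetter requires. By insisting on decomposable forms you have made the problem strictly harder, with no evidence that it remains solvable; it is simpler to drop this restriction. (One small note: the identity you write with constant $-\tfrac12$ is in fact the correct normalisation for trace-zero $2\times2$ matrices; the paper's $-\tfrac18$ in \eqref{tracerel} appears to be a harmless typographical slip, as only the vanishing behaviour is used.) Your closing observation that one cannot simply retain $3n-8$ of the individual $t_{ijk}$ is correct and worth keeping, as it explains why genuine linear combinations are needed.
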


The following table compares our lower bound $5n-5$ with the dimension of $\C[\M_2^n]^G$ and the sizes of $S'_n$ and $S_n$ and the known upper bound $8n-5$ for small values of $n$:

\begin{center}
\begin{table}[h]
\begin{tabular}{c|ccccc}
$n$ & $\dim(\C[\M_2^n]^G)$ &$|S'_n|$ & $|S_n| $& Lower bound & Upper bound \\ \hline
2 & 5 & 5 &5 & 5 & 11\\
3 & 9 & 10&10 & 10 & 19 \\
4 & 13 & 18& 18 & 15 & 27\\
5 & 17 & 27& 30 & 20 & 35\\
6 & 21 & 37 & 47 & 25 & 43\\
7 & 25 & 48 & 70 & 30 & 51\\
8 & 29 & 60 & 100 & 35 & 59
\end{tabular}
\end{table}
\end{center}

In particular, our results imply that $S_3$ is indeed a separating set of minimal cardinality, but that $S_4$ may not be so, and $S_n$ for $n \geq 5$ is not. Note however, than our separating set $S'_n$ may not have minimum cardinality for $n \geq 4$ either, and certainly does not have minimum cardinality for $n \geq 8$. 

\subsection{Structure of paper} This paper is organised as follows. In Section 2 we define the separating variety for the action of a linear algebraic group on an affine variety. We explain how the geometry of the separating variety places obstructions on the existence of small separating sets, using ideas found primarily in the work of Dufresne and others \cite{DufresneSeparating, DufresneJeffries,DufresneKohlsSepVar}. In Section 3 we gather the results we need on matrix invariants, and compute a decomposition of the separating variety for $2 \times 2$ matrix invariants into irreducible components. We then compute the dimension of these components and prove our main results on lower bounds. In Section 4 we show how to construct the smaller separating sets $S'_n$. In Section 5 we prove similar results for the algebra of matrix semi-invariants.

\begin{ack} This research was partially funded by the EPSRC small grant scheme, ref: EP/W001624/1. The author thanks the research council for their support. Special thanks go Emilie Dufresne and Fabian Reimers for a number of suggestions and improvements.
\end{ack}

\section{Invariants of algebraic groups and the separating variety}

In this section we return to the situation and notation of Section 1.2. We note that $G$ also acts (by the same formula \eqref{action}) on the field $\kk(\vv)$ of rational functions $\vv \rightarrow \kk$. We denote the field of rational functions invariant under this action by $\kk(\vv)^G$.

The following is a well-known consequence of a theorem of Rosenlicht: see \cite[Corollary of Lemma~2.4]{PopovVinberg} for a modern proof.
\begin{prop}\label{trdeg} In the above setting we have
\[\Trdeg_{\kk}(\kk(\vv)^G) = \dim(\vv) - \dim(G) + \min_{v \in \vv}\dim(G_v)\] where $G_v$ denotes the stabiliser of a point $v \in\vv$.
\end{prop}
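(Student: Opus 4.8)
The plan is to derive the formula directly from Rosenlicht's theorem on the existence of a rational quotient. I would first reduce to the case where $\vv$ is irreducible: in general one restricts attention to an irreducible component of maximal dimension, and since the orbit $Gv$ of a point on such a component is again contained in it, nothing is lost. Rosenlicht's theorem then provides a $G$-stable dense open subset $U \subseteq \vv$ together with a dominant morphism $\pi \colon U \to Y$ of irreducible varieties whose fibres are exactly the $G$-orbits meeting $U$, and which induces an isomorphism $\kk(Y) \to \kk(U)^G$. Because $U$ is dense and open in $\vv$, restriction of rational functions identifies $\kk(\vv)^G$ with $\kk(U)^G$, so that
\[\Trdeg_{\kk}(\kk(\vv)^G) = \Trdeg_{\kk}(\kk(Y)) = \dim(Y).\]

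Next I would apply the theorem on fibre dimensions of a dominant morphism to $\pi$: for $v$ in a dense open subset of $U$ one has $\dim(U) = \dim(Y) + \dim(\pi^{-1}(\pi(v)))$, and by construction $\pi^{-1}(\pi(v)) = Gv$. Since $\dim(U) = \dim(\vv)$, this gives $\dim(Y) = \dim(\vv) - \dim(Gv)$ for generic $v$. It then remains to identify the generic orbit dimension. The orbit morphism $G \to Gv$, $g \mapsto g\cdot v$, is surjective with all fibres left cosets of the stabiliser $G_v$, hence $\dim(Gv) = \dim(G) - \dim(G_v)$. The function $v \mapsto \dim(G_v)$ is upper semicontinuous on $\vv$ — one sees this by realising the stabilisers as the fibres of the morphism $G \times \vv \to \vv \times \vv$, $(g,v) \mapsto (g\cdot v, v)$, over the diagonal and invoking semicontinuity of fibre dimension — so it attains its minimum $m := \min_{v \in \vv}\dim(G_v)$ on a dense open subset. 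Intersecting this with the dense open set found above, a generic $v$ satisfies $\dim(G_v) = m$ and hence $\dim(Gv) = \dim(G) - m$. Combining the displays yields
\[\Trdeg_{\kk}(\kk(\vv)^G) = \dim(\vv) - \dim(G) + \min_{v \in \vv}\dim(G_v).\]

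I do not anticipate a genuine obstacle here: the substantive input is Rosenlicht's theorem, which I am free to quote, supplemented only by standard facts about fibre dimensions of morphisms and about orbit maps. The one point that needs a little care is the reduction to the irreducible (or equidimensional) case, since $\vv$ is only assumed to be an affine variety; by contrast, connectedness of $G$ is not required, as both orbit dimensions and stabiliser dimensions depend only on the identity component of $G$.
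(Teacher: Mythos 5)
Your proof is correct, and the route is the standard one. The paper does not give its own argument for this proposition but points to Popov--Vinberg, where the proof is exactly what you have written: Rosenlicht's rational quotient $\pi\colon U\to Y$ with $\kk(Y)\cong\kk(U)^G=\kk(\vv)^G$, the generic fibre-dimension theorem to get $\dim Y=\dim\vv-\dim(Gv)$ for generic $v$, the orbit--stabiliser formula $\dim(Gv)=\dim G-\dim G_v$, and upper semicontinuity of $v\mapsto\dim G_v$ to replace the generic stabiliser dimension by the minimum; so your proposal matches the cited approach.
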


The following is also well-known. For lack of a reference we provide a proof:
\begin{prop}\label{italian} Suppose there exists no non-trivial character $G \rightarrow \kk^*$. Then 
\[\Quot(\kvg) = \kk(\vv)^G.\]
\end{prop}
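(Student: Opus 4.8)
The plan is to prove the inclusion $\Quot(\kvg) \subseteq \kk(\vv)^G$ trivially (every invariant polynomial is an invariant rational function, and the field of fractions of $\kvg$ sits inside $\kk(\vv)$ with the induced $G$-action fixing it), and then to establish the reverse inclusion $\kk(\vv)^G \subseteq \Quot(\kvg)$, which is the substantive direction. So suppose $q \in \kk(\vv)^G$; write $q = f/g$ with $f, g \in \kv$ having no common factor (this uses that $\kv$ is a UFD, i.e. that $\vv$ is, or may be assumed, factorial — if the paper wants $\vv$ an arbitrary affine variety one reduces to an affine space by a closed embedding, or one just works on the smooth locus; I would state the hypothesis as $\vv$ being a vector space or factorial affine variety, which is the case of interest here).

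The key step is the standard ``denominator'' argument. For each $g_0 \in G$ we have $g_0 \cdot q = q$, i.e.
\[
\frac{g_0 \cdot f}{g_0 \cdot g} = \frac{f}{g}
\]
in $\kk(\vv)$. Since $G$ acts by algebra automorphisms, $g_0 \cdot f$ and $g_0 \cdot g$ still have no common factor, so unique factorisation forces $g_0 \cdot g = \chi(g_0)\, g$ for some scalar $\chi(g_0) \in \kk^*$, and correspondingly $g_0 \cdot f = \chi(g_0)\, f$. One checks $\chi \colon G \to \kk^*$ is a group homomorphism (from $(g_0 h_0)\cdot g = g_0 \cdot (h_0 \cdot g)$), and it is a morphism of varieties because the action map $G \times \vv \to \vv$ is a morphism and $\chi(g_0)$ can be read off as a ratio of polynomial functions at a point where $g$ does not vanish; hence $\chi$ is a character of $G$. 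By hypothesis $\chi$ is trivial, so $g_0 \cdot g = g$ and $g_0 \cdot f = f$ for all $g_0 \in G$, i.e. $f, g \in \kvg$, whence $q = f/g \in \Quot(\kvg)$.

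The main obstacle, and the only point requiring care, is the passage from ``$\chi$ is a set-theoretic homomorphism $G \to \kk^*$'' to ``$\chi$ is a genuine character (morphism of algebraic groups)'' — one must argue that $\chi$ is regular. I would do this by fixing a point $v_0 \in \vv$ with $g(v_0) \neq 0$ and noting $\chi(g_0) = (g_0 \cdot g)(v_0)/g(v_0) = g(g_0^{-1} \cdot v_0)/g(v_0)$, which exhibits $\chi$ as a composition of the morphism $g_0 \mapsto g_0^{-1}\cdot v_0$ with the regular function $g/g(v_0)$ on the open set where $g \neq 0$; this is regular on a dense open subset of $G$ and, being a homomorphism, extends to all of $G$. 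A secondary subtlety is the well-definedness of the coprime representation $q = f/g$ up to a unit; since $\kv^* = \kk^*$ for $\vv$ factorial, this causes no trouble. If one does not wish to assume $\vv$ factorial, the cleanest fix is to remark that it suffices to prove the statement for $\vv = \bA^N$ via a $G$-equivariant closed embedding, reducing to the polynomial-ring case.
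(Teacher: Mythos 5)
Your proof is essentially the paper's argument: write an invariant rational function as a coprime fraction $f/g$, observe that each $g_0\in G$ rescales $f$ and $g$ by a common scalar $\chi(g_0)$, check that $\chi$ is a character, and invoke the hypothesis to conclude $f,g\in\kvg$. The only differences are matters of care rather than strategy: you explicitly flag that coprimality needs $\kv$ to be a UFD (the paper quietly assumes this, and also invokes degree-preservation to pin down the scalar, which presupposes a graded/linear setting), and you justify that $\chi$ is a regular character rather than merely a set-theoretic homomorphism, a point the paper passes over with ``one sees easily.''
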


\begin{proof} It is clear that the right-hand side contains the left. So let $\frac{f_1}{f_2} \in \kk(\vv)^G$. We may assume $f_1$ and $f_2$ are coprime in $\kv$. Now for any $g \in G$ we have
\[\frac{f_1}{f_2} = \frac{g \cdot f_1}{g \cdot f_2}.\]
Then since $f_1$ and $f_2$ are coprime and the action of $G$ preserves degree we must have
\[g \cdot f_1 = \lambda_g f_1, g \cdot f_2 = \lambda_g f_2\] for some constant $\lambda_g \in \kk$. Moreover one sees easily that the assignment $g \rightarrow \lambda_g$ is a homomorphism $G \rightarrow \kk^*$. Since there exists no non-trivial character $G\rightarrow \kk^*$ we get that $f_1, f_2 \in \kvg$ as required.
\end{proof}

\begin{cor}\label{dimensionofinvariantring}
Suppose there exists no non-trivial character $G \rightarrow \kk^*$. Then 
\[\dim(\kvg) = \dim(\vv) - \dim(G) + \min_{v \in \vv}\dim(G_v).\] 
\end{cor}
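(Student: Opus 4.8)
The plan is to combine the two preceding results in the obvious way. By Proposition~\ref{italian}, the hypothesis that $G$ admits no non-trivial character $G \rightarrow \kk^*$ gives $\Quot(\kvg) = \kk(\vv)^G$. Since $\kvg$ is a finitely generated $\kk$-algebra (here I would invoke the standing hypothesis that $G$ is reductive, or otherwise note that the statement is only interesting in the finitely generated case), its Krull dimension equals the transcendence degree over $\kk$ of its quotient field, i.e. $\dim(\kvg) = \Trdeg_{\kk}(\Quot(\kvg))$. Combining these two facts yields $\dim(\kvg) = \Trdeg_{\kk}(\kk(\vv)^G)$.

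Then I would simply apply Proposition~\ref{trdeg}, which identifies $\Trdeg_{\kk}(\kk(\vv)^G)$ with $\dim(\vv) - \dim(G) + \min_{v \in \vv}\dim(G_v)$. Chaining the equalities gives
\[
\dim(\kvg) = \Trdeg_{\kk}(\kk(\vv)^G) = \dim(\vv) - \dim(G) + \min_{v \in \vv}\dim(G_v),
\]
which is exactly the claim.

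The only genuine subtlety — and the step I would be most careful about — is the passage from ``quotient fields have equal transcendence degree'' to ``Krull dimensions are equal.'' For a domain that is finitely generated over a field this is standard (Krull dimension equals transcendence degree of the fraction field), so the argument goes through cleanly as long as $\kvg$ is finitely generated; under the blanket reductivity assumption in force throughout the paper this holds by Hilbert's finiteness theorem. This is the same bridge used earlier in the introduction (via \cite[Proposition~3.2.3]{DufresnePhD} and \cite[Proposition~2.3(b)]{Giral}) to pass between separating algebras and $\kvg$, so there is nothing new to prove here; one just has to state the hypothesis cleanly. Everything else is a direct concatenation of Propositions~\ref{trdeg} and~\ref{italian}.
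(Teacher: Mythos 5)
Your argument is correct and is exactly the intended derivation: the paper states this as a corollary of Propositions~\ref{trdeg} and~\ref{italian} without further proof, and the bridge you describe (Krull dimension of a finitely generated domain over $\kk$ equals the transcendence degree of its fraction field) is precisely what is implicitly invoked. Your care about finite generation of $\kvg$ is well placed but harmless here, as the paper's applications are all to reductive $G$.
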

We note that this applies in particular when $G = \SL_2(\C)$.

The main tool in our proofs will be the separating variety. This was introduced by Kemper in \cite{KemperCompRed}:

\begin{Def}\[ \mathcal{S}_{G,\vv} = \{(v, w) \in \vv^2: f(v)= f(w) \ \text{for all} \ f \in \kvg \} .\]
\end{Def}

In other words, the separating variety is the subvariety of $\vv^2$ consisting of pairs of points which are not separated by any invariant. 

We define $I_{G,\vv}$ to be the ideal of $\kk[\vv^2]$ consisting of the polynomial functions which vanish on $\mathcal{S}_{G,\vv}$. Clearly this is a radical ideal. Then a separating set can be characterised as a subset $S \subseteq \kk[V]^G$ which cuts out the separating variety in $\vv^2$, in other words (see \cite[Theorem~2.1]{DufresneSeparating})

\begin{prop}\label{radversion}
$S \subseteq \kk[\vv]^G$ is a separating set if and only if 
\[V_{\vv^2}(\delta(S)) = \mathcal{S}_{G,\vv}.\]
\end{prop}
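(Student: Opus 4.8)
The plan is to unwind both sides of the claimed equality directly from the definitions, keeping in mind that $\delta(S)$ denotes the set of functions $\delta(s)(v,w) := s(v) - s(w)$ on $\vv^2$ (this is the standard notation of \cite{DufresneSeparating}, where $\delta \colon \kk[\vv] \to \kk[\vv^2]$ is the map $f \mapsto f \otimes 1 - 1 \otimes f$). With this in hand, for a point $(v,w) \in \vv^2$ we have $(v,w) \in V_{\vv^2}(\delta(S))$ if and only if $\delta(s)(v,w) = 0$ for all $s \in S$, which is precisely the condition $s(v) = s(w)$ for all $s \in S$. Similarly $(v,w) \in \mathcal{S}_{G,\vv}$ if and only if $f(v) = f(w)$ for all $f \in \kvg$, which by Definition \ref{sepdef} of $\delta(\kvg)$ means $(v,w) \in V_{\vv^2}(\delta(\kvg))$.

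First I would observe that $\mathcal{S}_{G,\vv} = V_{\vv^2}(\delta(\kvg))$ always holds (it is essentially the definition, rewritten), so that the statement reduces to: $S$ is a separating set $\iff V_{\vv^2}(\delta(S)) = V_{\vv^2}(\delta(\kvg))$. The inclusion $V_{\vv^2}(\delta(\kvg)) \subseteq V_{\vv^2}(\delta(S))$ holds for \emph{any} subset $S \subseteq \kvg$, since $\delta(S) \subseteq \delta(\kvg)$ and taking vanishing loci is inclusion-reversing. Hence the content is the reverse inclusion $V_{\vv^2}(\delta(S)) \subseteq \mathcal{S}_{G,\vv}$.

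Next I would check the two implications of the biconditional. If $S$ is a separating set, then for every $(v,w) \in V_{\vv^2}(\delta(S))$ we have $s(v) = s(w)$ for all $s \in S$, so by Definition \ref{sepdef} $f(v) = f(w)$ for all $f \in \kvg$, i.e. $(v,w) \in \mathcal{S}_{G,\vv}$; combined with the automatic inclusion this gives equality. Conversely, if $V_{\vv^2}(\delta(S)) = \mathcal{S}_{G,\vv}$, take $v,w \in \vv$ with $s(v) = s(w)$ for all $s \in S$; then $(v,w) \in V_{\vv^2}(\delta(S)) = \mathcal{S}_{G,\vv}$, so $f(v) = f(w)$ for all $f \in \kvg$. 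The other direction of the equivalence in Definition \ref{sepdef} (that $f(v)=f(w)$ for all $f\in\kvg$ forces $s(v)=s(w)$ for all $s\in S$) is trivial since $S \subseteq \kvg$. Hence $S$ is a separating set.

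There is essentially no obstacle here — the statement is a reformulation, and the only thing that requires a modicum of care is making the notation $\delta$ and the set-theoretic vanishing locus $V_{\vv^2}(-)$ precise, and noting which inclusion is automatic (the one coming from $S \subseteq \kvg$) versus which one carries the separating-set hypothesis. If one wishes, one can add the remark that since $I_{G,\vv}$ is radical, $V_{\vv^2}(\delta(S)) = \mathcal{S}_{G,\vv}$ is further equivalent to the ideal-theoretic statement $\sqrt{(\delta(S))} = I_{G,\vv}$, which is the form in which the criterion will actually be applied in Section 3.
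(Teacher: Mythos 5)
Your proof is correct, and it takes the only natural route: unwinding the definitions of $\delta$, $V_{\vv^2}(-)$, and $\mathcal{S}_{G,\vv}$, and checking both directions of the biconditional. The paper does not give a proof of this proposition (it cites \cite[Theorem~2.1]{DufresneSeparating}); your argument is the standard one behind that reference, with one harmless slip in that you wrote $\delta(f) = f\otimes 1 - 1\otimes f$ where the paper defines $\delta(f) = 1\otimes f - f\otimes 1$ — a sign discrepancy that does not affect the vanishing locus — and the phrase ``Definition \ref{sepdef} of $\delta(\kvg)$'' should really just refer to the definition of $\mathcal{S}_{G,\vv}$ together with the definition of $\delta$.
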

where  $\delta: \kk[\vv] \rightarrow \kk[\vv^2] = \kk[\vv] \otimes \kk[\vv]$ is defined by
\[\delta(f) = 1 \otimes f - f \otimes 1.\] 

Equivalently, via the Nullstellensatz, $S$ is a separating if and only if
\[\sqrt{(\delta(f): f \in S)} = {I_{G,\vv}}.\]
Consequently the size of a separating set for $\vv$ is bounded below by the minimum number of generators of $I_{G,\vv}$ up to radical, that is, the minimum number of elements generating any ideal whose radical is $I_{G,\vv}$ (this is sometimes called the {\it arithmetic rank} of $I_{G,\vv}$). We then find, using Krull's height theorem, (see e.g. \cite[Theorem~10.2]{Eisenbud}) that :
\begin{prop}\label{krullbound}
Let $S \subseteq \kk[\vv]^G$ by a separating set. Then $|S| \geq \codim_{\vv^2}(\cc)$ for all irreducible components $\cc$ of $\mathcal{S}_{G,\vv}$.
\end{prop}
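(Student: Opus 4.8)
The plan is to derive the inequality directly from Proposition~\ref{radversion} together with Krull's height theorem, so the argument is almost entirely formal. Let $S \subseteq \kk[\vv]^G$ be a separating set and set $I := (\delta(f) : f \in S) \subseteq \kk[\vv^2]$, an ideal generated by $|S|$ elements. By Proposition~\ref{radversion} we have $V_{\vv^2}(I) = V_{\vv^2}(\delta(S)) = \mathcal{S}_{G,\vv}$, equivalently $\sqrt{I} = I_{G,\vv}$. Passing to the radical does not change the underlying variety, hence it does not change the set of irreducible components: the minimal primes of $I$ are exactly the prime ideals $\mathfrak{p}_\cc$ of the irreducible components $\cc$ of $\mathcal{S}_{G,\vv}$.

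Now fix such a component $\cc$. Since $\kk[\vv^2] = \kk[\vv] \otimes_\kk \kk[\vv]$ is a Noetherian ring and $\mathfrak{p}_\cc$ is a minimal prime over the ideal $I$, which is generated by $|S|$ elements, Krull's height theorem (see e.g.\ \cite[Theorem~10.2]{Eisenbud}) gives
\[
\height(\mathfrak{p}_\cc) \leq |S|.
\]
It then remains to identify $\height(\mathfrak{p}_\cc)$ with $\codim_{\vv^2}(\cc)$. This is where one uses that $\kk[\vv^2]$ is a finitely generated algebra over the field $\kk$: replacing $\vv^2$ by the irreducible component containing $\cc$ (so that the coordinate ring becomes a finitely generated domain over $\kk$), one has $\height(\mathfrak{p}_\cc) + \dim\bigl(\kk[\vv^2]/\mathfrak{p}_\cc\bigr) = \dim(\kk[\vv^2])$, which is exactly the statement $\height(\mathfrak{p}_\cc) = \codim_{\vv^2}(\cc)$. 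In the situation of interest in this paper, $\vv$ is a vector space, so $\vv^2$ is an affine space and this identity is immediate.

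Combining the two displays yields $\codim_{\vv^2}(\cc) \leq |S|$ for every irreducible component $\cc$ of $\mathcal{S}_{G,\vv}$, as claimed. There is no real obstacle here: the whole content is Krull's height theorem applied to the $|S|$ generators $\delta(f)$, $f \in S$, and the remaining steps are the standard translation between the ideal-theoretic notion of height and the geometric notion of codimension, valid because we are working with finitely generated algebras over a field.
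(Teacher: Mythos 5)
Your proof is correct and is exactly the argument the paper has in mind: the paper does not spell it out but simply points to Krull's height theorem after observing that a separating set of size $|S|$ gives an ideal $(\delta(f):f\in S)$ with the same radical as $I_{G,\vv}$. One small simplification: you don't actually need the dimension formula $\height(\mathfrak{p}_\cc)+\dim(\kk[\vv^2]/\mathfrak{p}_\cc)=\dim\kk[\vv^2]$ (and hence the catenary/equidimensionality discussion), since $\codim_{\vv^2}(\cc)=\height(\mathfrak{p}_\cc)$ holds already by the inclusion-reversing bijection between irreducible closed subsets of $\vv^2$ and prime ideals of $\kk[\vv^2]$; so the bound follows directly from Krull's theorem applied to the minimal prime $\mathfrak{p}_\cc$ over the ideal generated by the $\delta(f)$.
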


Therefore, in order to use Proposition \ref{krullbound} above to find lower bounds for separating sets, we must decompose $\mathcal{S}_{G,\vv}$ into irreducible components. As a first step, we observe that the separating variety contains the following subvariety, which we call the {\it graph} of the action:
\begin{Def} \[\Gamma_{G,\vv} = \{(v,gv): v \in \vv, g \in G \}.\]
\end{Def}
If $G$ is connected and reductive, then $\overline{\Gamma_{G,\vv}}$ is an irreducible component of $\mathcal{S}_{G,\vv}$. Its dimension is easily seen to equal $\dim(\vv) + \dim(G) - \min\{\dim(G_v): v \in \vv\}$, and we note that in case there is no nontrivial character $G \rightarrow \C^*$ that
\begin{equation}\label{graphdim} \dim\overline{(\Gamma_{G,\vv})} = 2 \dim(\vv) - \dim(\kvg). \end{equation} However,  the separating variety may have extra components and some of these may have smaller dimension. These components are an obstruction to the existence of small separating sets.

Stronger obstructions may be obtained by taking a closer look at the geometry of $\mathcal{S}_{G,\vv}$. Recall that a Noetherian toplogical space $\vv$ is said to be {\it connected in dimension $k$} if the following holds: for each closed subvariety $Z \subseteq \vv$ with dimension $< k$, the complement $\vv \setminus Z$ is connected. If the same holds for all $Z \subseteq \vv$ with $\codim_{\vv}(Z)  > k$, we say that $\vv$ is {\it connected in codimension $k$}. Note that if $\vv$  is equidimensional, or all irreducible components of $\vv$ intersect nontrivially then we have $\dim(Z) = \dim(\vv)- \codim_{\vv}(Z)$; consequently $\vv$ is connected in dimension $k$ if and only if it is connected in codimension $\dim(\vv)-k$.

 Now we recall Grothendieck's connectedness theorem (see \cite[Expos\'e~XIII, Theorem~2.1]{SGA2}): suppose $(R, \mathfrak{m})$ is a complete local ring of dimension $n$ such that $\spec(R)$ is connected in dimension $k<n$, and let $f_1,f_2, \ldots, f_r \in \mathfrak{m}$. Then $\spec(R/(f_1,f_2, \ldots, f_r))$ is connected in dimension $k-r$.  

The idea is to apply this to $\vv^2 = \spec(\kk[\vv^2])$. Of course, $\kk[\vv^2]$ is graded-local but not a complete ring, but we can bring the connectedness back from that of the completion $\spec(\widehat{\kk[\vv^2]})$ using some ideas of Reimers \cite{reimers} to obtain the following:

\begin{prop}\label{grothbound} Suppose $\mathcal{S}_{G,\vv}$ is not connected in codimension $k$, and let $S \subseteq \kvg$ be a separating set. Suppose further that all irreducible components of $\mathcal{S}_{G,\vv}$ intersect nontrivially, and that there does not exist a non-trivial character $G \rightarrow \kk^*$. Then $|S| \geq \dim(\kvg) + k$.
\end{prop}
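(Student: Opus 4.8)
The plan is to combine Proposition \ref{radversion}/the Nullstellensatz characterisation of separating sets with Grothendieck's connectedness theorem, transferred to the non-complete graded ring $\kk[\vv^2]$ via Reimers's technique. First I would set $R = \kk[\vv^2]$, which is a graded ring with irrelevant maximal ideal $\mathfrak{m}$, and pass to the completion $\widehat R = \widehat{\kk[\vv^2]}$ at $\mathfrak{m}$; since $\vv^2$ is an affine variety, $\widehat R$ is a complete local ring of dimension $N := \dim(\vv^2) = 2\dim(\vv)$. The key point of Reimers's work is that the connectedness-in-codimension properties of $\spec(R)$ and $\spec(\widehat R)$ agree along a closed subvariety stable under the grading; in particular, since $\mathcal{S}_{G,\vv}$ is a cone (it is cut out by the invariant-difference polynomials $\delta(f)$ with $f$ homogeneous, and the diagonal scaling action on $\vv^2$ preserves it), the hypothesis that $\mathcal{S}_{G,\vv}$ is \emph{not} connected in codimension $k$ transfers to the statement that $\spec(\widehat R / \widehat{I_{G,\vv}})$ is not connected in codimension $k$ either. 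Here I would need to be a little careful: Grothendieck's theorem is phrased for ambient complete local rings, so I actually want to record that $\spec(\widehat R)$ itself is connected in codimension $0$ (equivalently in dimension $N$), which holds because $\vv^2 = \vv \times \vv$ is irreducible when $\vv$ is — or, if $\vv$ is only equidimensional, because the product of equidimensional affine varieties is connected in codimension $0$; this is where the hypothesis ``all irreducible components intersect nontrivially'' (together with the analogous statement for the factors) is used to guarantee the $\codim$/$\dim$ translation stays honest.

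Next, let $S = \{s_1, \ldots, s_r\} \subseteq \kvg$ be a separating set; by Proposition \ref{radversion} (the Nullstellensatz form) we have $\sqrt{(\delta(s_1), \ldots, \delta(s_r))} = I_{G,\vv}$, so the $r$ elements $\delta(s_i) \in \mathfrak{m}$ cut out exactly $\mathcal{S}_{G,\vv}$ set-theoretically. Applying Grothendieck's connectedness theorem inside $\widehat R$ to the sequence $\delta(s_1), \ldots, \delta(s_r)$: since $\spec(\widehat R)$ has dimension $N$ and is connected in dimension $N$, the quotient $\spec(\widehat R/(\delta(s_1), \ldots, \delta(s_r)))$ is connected in dimension $N - r$, hence $\spec(\widehat R/\widehat{I_{G,\vv}})$ — which has the same underlying topological space — is connected in dimension $N - r$. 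But by hypothesis $\mathcal{S}_{G,\vv}$, and hence its completion, is \emph{not} connected in codimension $k$; since all components of $\mathcal{S}_{G,\vv}$ meet, being connected in codimension $k$ is equivalent to being connected in dimension $\dim(\mathcal{S}_{G,\vv}) - k$, so $\spec(\widehat R/\widehat{I_{G,\vv}})$ is not connected in dimension $\dim(\mathcal{S}_{G,\vv}) - k$. Combining, we must have $N - r < \dim(\mathcal{S}_{G,\vv}) - k$, i.e.
\[
r > N - \dim(\mathcal{S}_{G,\vv}) + k.
\]
Finally I would identify the right-hand side: since $\overline{\Gamma_{G,\vv}}$ is a component of $\mathcal{S}_{G,\vv}$ of maximal dimension (it is an irreducible component because $G$ is connected reductive — this uses the remarks preceding the Proposition), we have $\dim(\mathcal{S}_{G,\vv}) = \dim\overline{\Gamma_{G,\vv}} = 2\dim(\vv) - \dim(\kvg)$ by \eqref{graphdim}, using the no-nontrivial-character hypothesis. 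Substituting $N = 2\dim(\vv)$ gives $N - \dim(\mathcal{S}_{G,\vv}) = \dim(\kvg)$, hence $r \geq \dim(\kvg) + k$, as claimed. (Here I should double-check the strict-vs-weak inequality: Grothendieck gives connectedness in dimension $k - r$ only when that number is nonnegative and the ambient is connected in the appropriate sense, and the ``not connected'' hypothesis is an open condition on the codimension, so the chain of inequalities yields $r \ge \dim(\kvg) + k$ rather than a strict inequality after the translation — I would write this out carefully.)

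I expect the main obstacle to be the transfer step between $\kk[\vv^2]$ and its completion: Grothendieck's theorem as stated in \cite{SGA2} needs a complete local base, whereas $\kk[\vv^2]$ is only graded-local, and one must check that completing at $\mathfrak{m}$ neither creates nor destroys connectedness in the relevant codimension for the cone $\mathcal{S}_{G,\vv}$ and for the ambient $\vv^2$. Reimers's results in \cite{reimers} are exactly designed for this, so the work is in citing the correct statement and verifying its hypotheses (homogeneity of $I_{G,\vv}$, which is immediate since it is generated by the bihomogeneous $\delta(f)$; equidimensionality or the intersection hypothesis on components, which is assumed). A secondary, more bookkeeping-flavoured obstacle is keeping the dimension/codimension translations consistent — this is precisely why the hypotheses ``all irreducible components of $\mathcal{S}_{G,\vv}$ intersect nontrivially'' and ``no nontrivial character'' appear, the former to make ``connected in dimension $\dim - k$'' $\Leftrightarrow$ ``connected in codimension $k$'', the latter to apply \eqref{graphdim}.
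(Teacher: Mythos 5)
Your overall strategy is exactly the paper's: pass to the completion $\widehat R$ of $\kk[\vv^2]$, apply Grothendieck's connectedness theorem to the ideal $(\delta(s_1),\dots,\delta(s_r))$ cutting out $\mathcal{S}_{G,\vv}$, transfer back via Reimers, then translate dimension to codimension using \eqref{graphdim}. Two points, however, need tightening before the argument closes.

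First, the off-by-one you flag at the end is a real gap, and its source is precisely the hypothesis $k<n$ in the cited version of Grothendieck's theorem. You apply the theorem with the ambient ``connected in dimension $N$'' (where $N=\dim\widehat R=2\dim\vv$), which is not a permissible input. The correct starting point is that $\spec(\widehat R)$ is connected in dimension $N-1$, giving $\spec(\widehat R/(\delta(s_1),\dots,\delta(s_r)))$ connected in dimension $N-1-r$. With $\dim(\mathcal{S}_{G,\vv})=N-\dim(\kvg)$, the non-connectedness in codimension $k$ forces $N-\dim(\kvg)-k>N-1-r$, i.e.\ $r>\dim(\kvg)+k-1$, hence $r\geq\dim(\kvg)+k$. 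Your version gives the strict $r>\dim(\kvg)+k$, which would over-count by one; the missing ``$-1$'' is exactly what rescues the stated bound.

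Second, justifying that $\spec(\widehat R)$ is connected in codimension $0$ by observing that $\vv^2$ is irreducible is not sufficient: irreducibility of an affine variety does not in general pass to the $\mathfrak{m}$-adic completion of its coordinate ring (nodal curves give a standard counterexample). The paper uses the stronger fact that $\vv^2$ is \emph{normal}: then $R$ is normal and excellent, so $\widehat R$ is a normal local ring, hence a domain, hence $\spec(\widehat R)$ is irreducible. Your fallback appeal to equidimensionality has the same problem — connectedness in codimension $0$ of $\vv^2$ does not by itself transfer to $\widehat R$. You need normality (or at least that $R_{\mathfrak m}$ is unibranch) to make this step go through.
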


\begin{proof} 
Set $R = \kk[\vv^2]$. Let $S \subset \kvg$ be a separating set of size $r$ and let $J = (\delta(f): f \in S)R$. By Proposition \ref{radversion}, $\mathcal{S}_{G,\vv} = \spec(R/J)$.    

Now let $\mathfrak{m}$ be the maximal ideal of $R$ and let $\widehat{R}$ denote the $\mathfrak{m}$-adic completion of $R$. As $\vv^2$ is normal, we have that $\spec(\widehat{R})$ is irreducible, and hence connected in dimension $d$ for all $d \leq \dim(\spec(\widehat{R})) = 2n$. Applying the connectedness theorem with $d=2n-1$ shows that $\spec(\widehat{R}/J\widehat{R})$ is connected in dimension $2n-1-r$. 

Since $\widehat{R}/J\widehat{R}$ is also the completion of $R/J$ at the maximal ideal $\mathfrak{m}/J$, it follows from the proof of \cite[Lemma~4.3]{reimers} that $\spec((R/J)_{\mathfrak{m}/J})$ is connected in dimension $2n-1-r$, and from \cite[Proposition~4.4]{reimers} that $\spec((R/J) = \mathcal{S}_{G,\vv}$ is connected in dimension $2n-1-r$ too. Further since
\[\dim(\mathcal{S}_{G,\vv}) \geq \dim(\overline{\Gamma_{G,\vv}}) = 2n - \dim(\kvg)\] and all irreducible components intersect nontrivially we get that $\mathcal{S}_{G,V}$ is connected in codimension $1+r-\dim(\kvg)$. Since  $\mathcal{S}_{G,\vv}$ is not connected in codimension $k$ we must have that
\[k < 1+r-\dim(\kvg)\]
i.e. that $r \geq  \dim(\kvg)+k$ as required.

\end{proof}

\section{Invariants of $2 \times 2$ matrices}

We begin this section by fixing some notation and simplifying our problem as much as possible. As we will be considering only $2 \times 2$ matrices, we drop some subscripts, writing $\M = \M_2$, etc. Observe that the action of $\GL_2(\C)$ on $\M^n$ is not faithful - the kernel is the subgroup of scalar matrices. Therefore we have a faithful action of $\SL_2(\C)$ on $\M^n$ and $\C[\M^n]^{\GL_2(\C)} = \C[\M^n]^{\SL_2(\C)}$. Since it is enough to study this action, we write $G:= \SL_2(\C)$ from now on..

We fix the notation for some subgroups of $G$:  the torus
\[ T:= \left\{ \begin{pmatrix} t & 0 \\ 0 & t^{-1} \end{pmatrix}: t \in \C^*\right\},\]

the unipotent subgroup
\[ U:= \left\{ \begin{pmatrix} 1 & u \\ 0 & 1 \end{pmatrix}: u \in \C \right\},\]

and the Borel subgroup
\[ B:= \left\{ \begin{pmatrix} t & u \\ 0 & t^{-1} \end{pmatrix}: t \in \C^*, u \in \C \right\}.\]

Next observe that $\M$ is not an indecomposable representation of $G$; we have
\[\M \cong \vv \oplus \C I\]
where $\vv$ denotes the set of trace-zero matrices in $\M$, $I$ the  $2 \times 2$ identity matrix and $G$ acts trivially on $\C I$. It is now easy to see that 
\[\C[\M^n]^G = \C[\vv^n]^G \otimes \C[\Tr(X_i): i = 1, \ldots, n].\] Note also that $\vv$ is fixed by the commuting action of $\GL_n$ on $\M$ as defined in \eqref{commuting action}.

A generic element $\bA \in \vv^n$ will be written as $(A_1,A_2,\ldots,A_n)$ where
\[A_i = \begin{pmatrix} b_i & c_i \\ a_i & -b_i \end{pmatrix}.\]
A generic element $(\bA,\bA') \in \vv^n \times \vv^n$ will be written with $\bA$ as above and $\bA' = (A'_1,A'_2,\ldots,A'_n)$ where
\[A'_i = \begin{pmatrix} b'_i & c'_i \\ a'_i & -b'_i \end{pmatrix}.\]
Throughout we write $g \cdot \bA$ for the conjugation action of $g \in \SL_2(\C)$ on $\bA \in \M$. We use the notation $h \star \bA$ for the commuting action of $h \in \GL_n(\C)$.

From this point onwards, $X_i$ etc. represent generic trace-zero matrices of coordinate functions on $\vv$.
It is now easy to see that $\Tr(X^2_i) = -2 \det(X_i)$.
Consequently, Proposition \ref{lbp} implies:

\begin{prop}\label{gens} The following set $E_n$ of invariants minimally generates $\C[\vv^n]^G$ as an algebra:
\begin{itemize}

\item $t_{ij}:= \Tr(X_iX_j), 1 \leq i \leq j \leq n$.
\item $t_{ijk}:= \Tr(X_iX_jX_k), 1 \leq i<j<k \leq n$.
\end{itemize}
\end{prop}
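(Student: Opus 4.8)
The plan is to deduce the proposition from the LeBruyn--Procesi result (Proposition \ref{lbp}) together with the tensor decomposition $\C[\M^n]^G = \C[\vv^n]^G \otimes \C[\Tr(X_i): i=1,\ldots,n]$ recorded above. Write $\tau_i := \Tr(X_i) \in \C[\M^n]^G$, and realise a generic matrix on $\M$ as $X_i = Y_i + \tfrac12\tau_i I$, where $Y_i$ is a generic trace-zero matrix. Expanding products and using $\Tr(Y_i)=0$, $\Tr(I)=2$ yields identities such as
\[\Tr(X_iX_j) = \Tr(Y_iY_j) + \tfrac12\tau_i\tau_j, \qquad \det(X_i) = -\tfrac12\Tr(Y_i^2) + \tfrac14\tau_i^2,\]
\[\Tr(X_iX_jX_k) = \Tr(Y_iY_jY_k) + \tfrac12\bigl(\tau_k\Tr(Y_iY_j) + \tau_j\Tr(Y_iY_k) + \tau_i\Tr(Y_jY_k)\bigr) + \tfrac14\tau_i\tau_j\tau_k.\]
Hence every element of the LeBruyn--Procesi generating set $S_n$ lies in the subalgebra generated by $E_n \cup \{\tau_1,\ldots,\tau_n\}$ (with $E_n$ understood in the generic trace-zero matrices $Y_i$); since conversely $E_n \cup \{\tau_i\} \subseteq \C[\M^n]^G$, we obtain the equality $\C[\M^n]^G = \C[E_n \cup \{\tau_1,\ldots,\tau_n\}]$.

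To extract generation of the first tensor factor, I would apply the $\C$-algebra homomorphism $\pi\colon \C[\M^n]^G = \C[\vv^n]^G \otimes \C[\tau_i] \twoheadrightarrow \C[\vv^n]^G$ which is the identity on $\C[\vv^n]^G$ and sends each $\tau_i$ to $0$. It is surjective and carries the generating set $E_n \cup \{\tau_i\}$ onto $E_n \cup \{0\}$ (note $E_n \subseteq \C[\vv^n]^G$ is fixed by $\pi$), whence $\C[\vv^n]^G = \pi(\C[E_n\cup\{\tau_i\}]) = \C[E_n]$, which is the generation statement.

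For minimality I would use a dimension count rather than a direct argument. Each of $\C[\vv^n]^G$, $\C[\M^n]^G$ and $\C[\tau_1,\ldots,\tau_n]$ is a graded connected $\C$-algebra, and for such an algebra $A$ every generating set has at least $\dim_\C(A_+/A_+^2)$ elements, with equality for the minimal ones. The tensor decomposition gives
\[\dim_\C\bigl(\C[\M^n]^G_+/(\C[\M^n]^G_+)^2\bigr) = \dim_\C\bigl(\C[\vv^n]^G_+/(\C[\vv^n]^G_+)^2\bigr) + n,\]
since $\C[\tau_i]$ is a polynomial ring on $n$ generators. A direct count gives $|E_n| = \binom{n+1}{2} + \binom{n}{3} = \tfrac16(n^3+5n)$, so $|S_n| = \tfrac16(n^3+11n) = |E_n| + n$. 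If $E_n$ were not minimal, then $\C[\vv^n]^G$ would admit a generating set with fewer than $|E_n|$ elements, forcing $\dim_\C(\C[\vv^n]^G_+/(\cdot)^2) < |E_n|$ and hence $\dim_\C(\C[\M^n]^G_+/(\cdot)^2) < |E_n|+n = |S_n|$; but Proposition \ref{lbp} asserts $S_n$ is a minimal generating set of $\C[\M^n]^G$, so $|S_n| = \dim_\C(\C[\M^n]^G_+/(\cdot)^2)$, a contradiction.

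The argument is mostly bookkeeping; the points requiring care are the explicit expansion of $\Tr(X_iX_jX_k)$ (to confirm the correction term involves only the $t_{ij}$ and the $\tau$'s, introducing no further invariants) and the observation that $\pi$ sends a generating set to a generating set. If one preferred a self-contained minimality proof not invoking Proposition \ref{lbp}, one would instead note that $\C[\vv^n]^G_1 = 0$ (as $\vv$ is the nontrivial irreducible adjoint representation of $\SL_2(\C)$, so $(\vv^n)^*$ has no trivial summand) together with the fact that the $t_{ij}$ are linearly independent in degree $2$ and the $t_{ijk}$ linearly independent in degree $3$; exhibiting a monomial distinguishing the various $\Tr(X_iX_jX_k)$ is the fiddliest step, which the counting route sidesteps.
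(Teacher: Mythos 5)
Your proof is correct, and it is essentially the same argument the paper has in mind: the paper simply asserts that Proposition \ref{lbp} ``implies'' Proposition \ref{gens} after recording the splitting $\C[\M^n]^G = \C[\vv^n]^G \otimes \C[\Tr(X_1),\ldots,\Tr(X_n)]$ and the identity $\Tr(X_i^2) = -2\det(X_i)$ on trace-zero matrices, leaving the bookkeeping implicit. You have filled in exactly that bookkeeping: the change of variables $X_i = Y_i + \tfrac12\tau_i I$ to re-express the LeBruyn--Procesi generators in terms of $E_n\cup\{\tau_i\}$, the projection $\pi$ killing the $\tau_i$ to obtain generation of the first tensor factor, and the standard graded-connected count $\dim_\C(A_+/A_+^2) = \dim_\C(B_+/B_+^2) + \dim_\C(C_+/C_+^2)$ for $A = B\otimes C$, which together with $|S_n| = |E_n| + n$ transports minimality from $S_n$ to $E_n$. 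A marginally slicker route to generation (equivalent to your expansion) is to use the $G$-equivariant restriction $\C[\M^n]^G \twoheadrightarrow \C[\vv^n]^G$ induced by $\vv^n \hookrightarrow \M^n$, under which $\Tr(X_i)\mapsto 0$, $\det(X_i)\mapsto -\tfrac12 t_{ii}$, $\Tr(X_iX_j)\mapsto t_{ij}$, $\Tr(X_iX_jX_k)\mapsto t_{ijk}$, but this is the same computation organised differently. No gaps.
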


We note for future reference that
\begin{equation}\label{tripletraceasdet} t_{ijk} = \begin{vmatrix} c_i & c_j & c_k \\ b_i & b_j & b_k \\ a_i & a_j & a_k \end{vmatrix}. 
\end{equation}

This implies that $t_{ijk} = \sgn(\sigma)t_{\sigma(i)\sigma(j)\sigma(k)}$ for any permutation $\sigma$ of $i,j,k$, and that $t_{iij} = 0$ for all $i,j$, etc.

The dimension of $\C[\vv^n]^G$ is $3n-3$ for $n \geq 2$. To see this note that there exists a trace-free $n$-matrix $\bA$ whose stabiliser is the finite group $\pm I$: take $$A_1 = \begin{pmatrix} 1 & 0 \\ 0 & -1 \end{pmatrix}, A_i = \begin{pmatrix} 0 & 1 \\ 0 & 0 \end{pmatrix}$$ for $i \geq 2.$  The result now follows from Corollary \ref{dimensionofinvariantring}. Note that this also implies that $\dim(\C[\M^n]^G) = 4n-3$ for $n \geq 2$ as stated in the introduction.

 The discussion above shows that our main results for $\C[\M^n]^G$ are equivalent to the following:
\begin{prop}\label{mainprop} Let $n \geq 2$ and suppose that $S \subseteq \C[\vv^n]^G$ is a separating set. Then $|S| \geq 4n-5$.
\end{prop}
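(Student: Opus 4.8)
The plan is to apply Proposition \ref{krullbound} (and, if needed, Proposition \ref{grothbound}) to the separating variety $\mathcal{S}_{G,\vv^n}$. Since $\dim(\C[\vv^n]^G) = 3n-3$ and there is no nontrivial character $G = \SL_2(\C) \to \C^*$, equation \eqref{graphdim} gives $\dim \overline{\Gamma_{G,\vv^n}} = 2(3n) - (3n-3) = 3n+3$, so $\overline{\Gamma_{G,\vv^n}}$ has codimension $6n - (3n+3) = 3n-3$ in $\vv^n \times \vv^n$. That only yields $|S| \geq 3n-3$, which is too weak; the point must be that $\mathcal{S}_{G,\vv^n}$ has at least one \emph{extra} irreducible component of strictly larger codimension, and that component is what forces the bound $4n-5$.

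First I would identify the extra components. A pair $(\bA,\bA') \in \vv^n \times \vv^n$ lies in $\mathcal{S}_{G,\vv^n}$ iff $\overline{G\bA} \cap \overline{G\bA'} \neq \emptyset$. The orbit closures of trace-zero $n$-matrices under conjugation are governed by the invariants $t_{ij}$ and $t_{ijk}$; using $t_{ijk}$ as the $3\times 3$ determinant in \eqref{tripletraceasdet}, a natural stratification emerges according to the rank $r$ of the $3 \times n$ matrix $M(\bA)$ whose columns are $(c_i, b_i, a_i)^{t}$. When $\rk M(\bA) \leq 1$ the matrices $A_i$ are simultaneously nilpotent-like (all $2\times 2$ trace-zero rank $\leq 1$, pairwise "parallel"), their common orbit closure contains $0$, and all the $t_{ij}$, $t_{ijk}$ vanish. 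So the locus
\[
\cc = \{(\bA,\bA') : \rk M(\bA) \leq 1,\ \rk M(\bA') \leq 1\}
\]
is contained in $\mathcal{S}_{G,\vv^n}$. I would compute $\dim \cc$: the set of $\bA \in \vv^n$ with $\rk M(\bA) \leq 1$ is a determinantal variety of dimension $n + 2$ (choose a line in $\C^3$, then scalars), so $\dim \cc = 2(n+2) = 2n+4$, giving codimension $6n - (2n+4) = 4n - 4$ in $\vv^n \times \vv^n$. This is larger than $3n-3$ for $n\geq 2$, and $4n-4 \geq 4n-5$; one then checks this component is not contained in $\overline{\Gamma}$ (dimension count: $2n+4 < 3n+3$ for $n > 1$, and it's genuinely a separate component once $n \geq 2$). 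Applying Proposition \ref{krullbound} to $\cc$ (or to the relevant irreducible piece of it of top dimension among these "degenerate" loci) yields $|S| \geq 4n-4$, even one better than claimed — so I would double-check whether the true extra component has codimension exactly $4n-5$ (perhaps the right locus is "$\rk M(\bA) \leq 1$ and $\bA'$ arbitrary with $\overline{G\bA'} \ni 0$, i.e. $\bA'$ also in the null-cone", or a mixed stratum where one side has rank $\leq 1$ and the other side's closure meets it), in which case the codimension drops to $4n-5$ and Proposition \ref{krullbound} gives exactly the stated bound. If the pure component gives $4n-4$, the paper's bound $4n-5$ is still correct (just not sharp from this component), but I suspect the sharp statement comes from a stratum I'd pin down by a careful orbit-closure analysis.

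The key steps, in order: (1) describe orbit closures in $\vv^n$ under conjugation via the rank of $M(\bA)$ and the values of $t_{ij}$, identifying when $\overline{G\bA} \cap \overline{G\bA'} \neq \emptyset$; (2) write $\mathcal{S}_{G,\vv^n}$ as the union of $\overline{\Gamma_{G,\vv^n}}$ with the degenerate locus (or loci) coming from the lower-rank strata, and verify this is the irreducible-component decomposition; (3) compute the dimension of each component by determinantal-variety dimension counts; (4) locate the component of minimal dimension (maximal codimension), confirm its codimension is $\geq 4n - 5$ (I expect exactly $4n-5$ for the governing stratum, or $4n-4$ which still suffices), and confirm it is genuinely a component and not swallowed by $\overline{\Gamma}$; (5) invoke Proposition \ref{krullbound}. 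If the naive rank stratification doesn't immediately give irreducibility of the pieces, I'd refine using the commuting $\GL_n$-action (\eqref{commuting action}), which acts on $M(\bA)$ by column operations and lets me reduce $\bA$ to a normal form.

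The main obstacle will be step (1)–(2): giving a clean, correct description of which pairs of trace-zero $n$-matrices have intersecting orbit closures, and proving that the resulting decomposition of $\mathcal{S}_{G,\vv^n}$ into $\overline{\Gamma}$ plus degenerate strata is exactly the decomposition into irreducible components (no missing components, each piece irreducible, and the codimension computation honest — in particular handling small $n$, e.g. $n=2$ where there are no $t_{ijk}$ and $n=3$ where the rank can be full, separately). The dimension counts themselves are routine once the strata are correctly identified.
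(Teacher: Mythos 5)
There is a genuine gap, and in fact two separate errors that each break the argument.

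First, your candidate locus $\cc = \{(\bA,\bA') : \rk M(\bA) \le 1,\ \rk M(\bA') \le 1\}$ is not contained in $\mathcal{S}_{G,\vv^n}$. If $\rk M(\bA) \le 1$ then the $A_i$ are all scalar multiples of one trace-zero matrix $A$, but $A$ need not be nilpotent: take $A = \diag(1,-1)$ and $A_i = \lambda_i A$. Then $t_{11}(\bA) = 2\lambda_1^2 \neq 0$, the orbit of $\bA$ is closed, and $\overline{G\bA}$ does not contain $0$. So such a pair is separated by invariants unless the $t_{ij}$-values match, and the blanket claim ``their common orbit closure contains $0$'' is false. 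The locus you should have in mind is the nullcone $\times$ nullcone, where each $\bA$ is simultaneously \emph{strictly} upper-triangularisable.

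Second, and more fundamentally, even the corrected nullcone $\times$ nullcone locus (which does have dimension $2n+4$ and codimension $4n-4$) is \emph{not an irreducible component} of $\mathcal{S}_{G,\vv^n}$: it equals $(G\times G)\cdot\cc_0$ in the paper's notation, and by Proposition \ref{uvar} one has $\cc_0 \subset \overline{\Gamma_{U,\vv^n}} \subset \overline{\Gamma_{G,\vv^n}}$, which is $(G\times G)$-stable, so nullcone $\times$ nullcone $\subseteq \overline{\Gamma_{G,\vv^n}}$. Proposition \ref{krullbound} applies only to irreducible components, so this locus contributes nothing. The actual extra component of $\mathcal{S}_{G,\vv^n}$ is $\overline{(G\times G)\cdot\cc}$ (pairs of upper-triangular $n$-tuples with matching diagonals), of dimension $3n+2$ and codimension $3n-2$, so Krull's theorem alone gives only $|S| \ge 3n-2$, strictly weaker than the claim for all $n \ge 4$. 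The paper's bound $4n-5$ is obtained by an entirely different mechanism: one computes that the intersection of the two components $\overline{\Gamma_{G,\vv^n}}$ and $\overline{(G\times G)\cdot\cc}$ has dimension $2n+4$ (this is where the rank-$\le 2$ determinantal variety really enters, via Proposition \ref{intersect}, Lemma \ref{closureggc} and Corollary \ref{dimintersect}), concludes that $\mathcal{S}_{G,\vv^n}$ is connected in codimension $n-1$ but not in codimension $n-2$, and then invokes the Grothendieck-connectedness bound of Proposition \ref{grothbound} to get $|S| \ge \dim(\C[\vv^n]^G) + (n-2) = (3n-3)+(n-2) = 4n-5$. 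Your plan never reaches for Proposition \ref{grothbound}, and without it the bound cannot be obtained from the component dimensions alone.
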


\begin{prop}\label{mainprop2} Let $n \geq 3$. Then there exists a separating set $S' \subseteq \C[\vv^n]^G$ with cardinality $\frac12(n^2+7n-16)$.\end{prop}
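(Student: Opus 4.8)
The plan is to construct the separating set $S'$ explicitly and then verify the separating property by analysing the orbit structure of $\vv^n$ under $G = \SL_2(\C)$ (with the conjugation action). Since $\C[\vv^n]^G$ is generated by the $t_{ij}$ and $t_{ijk}$, and since the ideal-theoretic relations among triple traces are governed by determinantal identities coming from \eqref{tripletraceasdet}, the natural strategy is to keep \emph{all} quadratic invariants $t_{ij}$ (there are $\binom{n+1}{2}$ of them), but to retain only a small, carefully chosen subset of the triple traces $t_{ijk}$, together with perhaps a handful of extra invariants built from them, so that the total count comes out to $\tfrac12(n^2+7n-16)$. A count check: $\binom{n+1}{2} = \tfrac12(n^2+n)$, so we are allowed $\tfrac12(n^2+7n-16) - \tfrac12(n^2+n) = 3n-8$ further invariants beyond the quadratics. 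So the combinatorial heart of the construction is to show that $3n-8$ suitably chosen cubic (or cubic-derived) invariants suffice to recover separation.

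The key steps, in order, would be as follows. First, reduce to checking separation on a convenient slice: using the quadratic invariants $t_{ij}$ one already determines the Gram-type data of the tuple $(A_1,\dots,A_n)$ viewed inside the $3$-dimensional space $\vv$ equipped with the nondegenerate symmetric form $\langle X,Y\rangle = \Tr(XY)$, and the conjugation action of $\SL_2(\C)$ is exactly the action of $\mathrm{SO}_3(\C)$ on $\vv \cong \C^3$. So separating orbits of $n$-tuples of vectors in $\C^3$ under $\mathrm{SO}_3(\C)$ is the real content. Second, observe that the $t_{ij}$ determine all the pairwise inner products, hence the tuple up to the $\mathrm{O}_3$-action; what remains is to distinguish a tuple from its mirror image, which is detected by the single ``oriented volume'' function $t_{ijk} = \det$ of any three of the vectors, cf. \eqref{tripletraceasdet}. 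Third — and this is where the $3n-8$ count enters — one must handle degenerate configurations where the vectors span a subspace of dimension $\leq 2$, since then all triple traces vanish and no sign needs to be fixed, but one still needs enough cubic invariants present to certify that the configuration really is degenerate (i.e.\ that the relevant $3\times 3$ minors vanish) and to pin down which two-dimensional configurations are orbit-equivalent; one chooses a spanning set of the triple traces in the combinatorial sense — e.g.\ $t_{12k}$ for $k = 3,\dots,n$ together with $t_{123},\dots$ — so that the common zero locus of the chosen $t_{ijk}$ inside $V(\delta(\text{quadratics}))$ coincides with the common zero locus of all of them. Finally, one assembles these observations via Proposition \ref{radversion}: one shows $V_{(\vv^n)^2}(\delta(S')) = \mathcal{S}_{G,\vv^n}$ directly by taking a pair $(\bA,\bA')$ in the left side and producing $g \in G$ (or a limiting argument through orbit closures) carrying one to the other, exactly as in the reductive-group characterisation of the separating variety discussed after Example \ref{nonsep}.

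The main obstacle I expect is the degenerate-stratum analysis in the third step: when the $A_i$ fail to span $\vv$, the orbit closures of $\mathrm{SO}_3(\C)$ on tuples of coplanar or collinear vectors are genuinely subtler (isotropic vectors for the form $\Tr(XY)$ cause orbit non-closedness, just as in Example \ref{nonsep}), and one must check that the chosen subset of triple traces — which is small, only linear in $n$ — still cuts out the correct locus there up to radical rather than merely set-theoretically on the generic stratum. Controlling this requires understanding precisely which collections $\{t_{ijk}\}$ generate, up to radical, the ideal generated by all triple traces modulo the quadratics; concretely one shows that if $t_{12k} = 0$ for all $k$ and the quadratic data is fixed, then the plane spanned by $A_1,A_2$ (when $A_1,A_2$ are independent) contains all $A_k$, forcing every $t_{ijk}=0$. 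One then treats separately the loci where $A_1,A_2$ are dependent by re-indexing, which is why one needs slightly more than $n-2$ triple traces — hence the $3n-8$ rather than $n-2$. A secondary technical point is ensuring $S'$ consists of homogeneous polynomials, which is automatic here since all $t_{ij}$ and $t_{ijk}$ are homogeneous, so no non-homogeneous correction terms are needed.
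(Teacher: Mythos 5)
Your high-level strategy matches the paper's: keep all $\tbinom{n+1}{2}$ quadratics $t_{ij}$, and show that $3n-8$ well-chosen cubic invariants suffice to detect when all triple traces vanish. Your reduction via the $\mathrm{SO}_3$ picture — the $t_{ij}$ fix the Gram data of $n$ vectors in $(\vv,\Tr(XY))\cong\C^3$ up to $\mathrm{O}_3$, and the triple traces fix the orientation — is a genuine alternative to the paper's purely algebraic route, which instead uses the determinantal relation $t_{ijk}t_{pqr}=-\tfrac18\det(t_{**})$ (equation~\eqref{tracerel}) to show directly that matching all $t_{ij}$ forces $t_{ijk}(\bA)=\pm t_{ijk}(\bA')$ with a globally coherent sign. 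Both lead to essentially the same Lemma~\ref{rankdetecting}: the separating condition reduces to a single-point condition on $\vv^n$, namely that the chosen cubics cut out, set-theoretically, the locus where all $t_{ijk}$ vanish — equivalently, the rank-$\leq 2$ locus of the generic $3\times n$ matrix.

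Where your plan has a genuine gap is in step three. You propose to retain a \emph{subset of the actual triple traces} $t_{ijk}$ (e.g.\ $t_{12k}$ for $k\geq 3$, with further re-indexing for degenerate loci where $A_1,A_2$ are dependent). The paper instead uses \emph{linear combinations} of triple traces, and the count $3n-8$ is precisely the arithmetic rank of the ideal of maximal minors of a generic $3\times n$ matrix as computed by Bruns--Schwanzl/Bruns--Vetter (\cite[Lemma~5.9, Prop.~5.20--5.22]{BrunsVetter}): that bound is achieved by taking poset-graded \emph{sums} of minors, not individual minors, and it is not at all clear that $3n-8$ individual minors suffice. Your re-indexing argument does not obviously terminate at $3n-8$, and you offer no mechanism to show it does. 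A second, related omission: for the paper's approach to work with linear combinations $f\in F\subset\bigoplus\C\cdot t_{ijk}$, one needs the observation that $t_{ijk}\,f\in\C[t_{pq}]$ (since each product $t_{ijk}t_{pqr}$ is a polynomial in the $t_{pq}$), so that a single match $f(\bA)=f(\bA')\neq 0$ propagates to $t_{ijk}(\bA)=t_{ijk}(\bA')$ for \emph{all} $i,j,k$. Your $\mathrm{O}_3$-picture captures this implicitly on the open stratum where the $A_i$ span $\vv$, but you would need to articulate this for generic $f$ in the chosen linear span and on the degenerate strata as well. In short: your geometric reformulation is correct and your count of what remains ($3n-8$ extra invariants) matches, but the actual construction of those $3n-8$ invariants — which is the substance of the proof — is missing and your proposed construction (individual minors with re-indexing) is unlikely to hit the bound.
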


We will need to consider certain subspaces of $\vv$: let $\ww$ denote the $B$-subspace of upper triangular matrices in $\vv$. Further, define
\[\cc:= \{(\bA,\bA') \in \ww^n \times \ww^n: b_i = b'_i \ \text{for all} \ i=1, \ldots, n\},\]
 \[\cc':= \{(\bA,\bA') \in \ww^n \times \ww^n: b_i =-b'_i \ \text{for all} \ i=1, \ldots, n\},\]
\[\cc_0:= \{(\bA,\bA') \in \ww^n \times \ww^n: b_i = b'_i= 0 \ \text{for all} \ i=1, \ldots, n\}.\]
The above are subspaces of $\vv^n \times \vv^n$, which are fixed under the diagonal action of $\GL_n(\C)$. Since the action commutes with the conjugation action of $G$, the same is true of the orbit subsets $(G \times G) \cdot \cc$, $(G \times G) \cdot \cc'$ and $(G \times G) \cdot \cc_0$.

Note that $U \cong \Ga(\C)$, the additive group of $\C$. The linear representation theory of this group is well-known: each indecomposable module is isomophic to $S^n(V)$, where $V$ is the restriction of the natural 2-dimensional $\C G$-module, and $S^n$ represents symmetric powers. One usually studies the so-called ``basic'' $\Ga$-actions: these are the $\Ga(\C)$-modules
\[V_i:= \langle v_0,v_1, \ldots, v_i \rangle: i \in \N\] on which $u \in \C$ acts via the formula
\[u \bullet v_i = \sum_{j=0}^i \frac{u^j}{j!} v_{i-j}\]
and it can be shown that $V_i \cong S^i(V)$. In our case a direct calculation shows that $\vv \cong V_2$ as a $U$-module.

The separating variety for arbitrary linear representations of $\Ga(\C)$ was considered by Dufresne and Kraft: in our case we have $\vv^n \cong V_2^{\oplus n}$ as $U$-modules, and from a careful reading of \cite[Theorem~7.5, Lemma~7.6]{DufresneKraft} we obtain:

\begin{prop}\label{uvar}\
\begin{itemize}
\item[(i)] We have $\mathcal{S}_{U,\vv^n} = \overline{\Gamma_{U,\vv^n}} \cup \cc$.
\item[(ii)] $\overline{\Gamma_{U, \vv^n}} = \Gamma_{U, \vv^n} \cup \cc'$.
\end{itemize}
In particular, $\cc_0 \subset \overline{\Gamma_{U, \vv^n}}$.
\end{prop}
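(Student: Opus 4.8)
The plan is to deduce both statements by specializing Dufresne--Kraft \cite[Theorem~7.5, Lemma~7.6]{DufresneKraft} to $W=V_2^{\oplus n}$, after making the $U$-module isomorphism $\vv^n\cong V_2^{\oplus n}$ completely explicit. A direct computation shows that conjugating $A_i=\left(\begin{smallmatrix} b_i & c_i \\ a_i & -b_i \end{smallmatrix}\right)$ by $\left(\begin{smallmatrix} 1 & u \\ 0 & 1\end{smallmatrix}\right)$ sends $a_i\mapsto a_i$, $b_i\mapsto b_i+ua_i$, $c_i\mapsto c_i-2ub_i-u^2a_i$; so the $i$th copy of $\vv$ is a copy of $V_2$ in which $a_i$ is the unique linear invariant, $b_i$ the middle coordinate and $-\tfrac12 c_i$ the top coordinate. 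Under this identification $\ww^n=\{a_1=\cdots=a_n=0\}$ is the sub-$U$-module $V_1^{\oplus n}$, on which $U$ acts by $b_i\mapsto b_i$, $c_i\mapsto c_i-2ub_i$, so the $b_i$ are now the linear invariants; the sets $\cc,\cc',\cc_0$ are precisely the slices $\{b_i=b_i'\}$, $\{b_i=-b_i'\}$, $\{b_i=b_i'=0\}$ of $\ww^n\times\ww^n$ appearing in the Dufresne--Kraft stratification.

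For (ii), Dufresne--Kraft's Lemma~7.6 identifies $\overline{\Gamma_{U,W}}\setminus\Gamma_{U,W}$ with the $u\to\infty$ limit points of the graph. Concretely: if $u(t)\to\infty$ and $\big(\bA^{(t)},u(t)\cdot\bA^{(t)}\big)$ has a finite limit $(\bA,\bA')$, then finiteness of the $b$-coordinates forces $\bA,\bA'\in\ww^n$; invariance of $\Tr(X_iX_j)$ (which restricts to $2b_ib_j$ on $\ww^n$) then forces $b_ib_j=b_i'b_j'$, hence $b_i'=\pm b_i$ with a sign that is constant on $\{b_i\neq 0\}$; and finiteness of the top coordinate in the limit rules out the $+$ sign unless all $b_i=0$. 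Thus every new limit point lies in $\cc'$, and a short explicit degeneration (e.g.\ $A_i^{(t)}=\left(\begin{smallmatrix} b_i-\tfrac t2(c_i'-c_i) & c_i \\ -2tb_i & -b_i+\tfrac t2(c_i'-c_i) \end{smallmatrix}\right)$ acted on by $u=1/t$) realizes every point of $\cc'$. Hence $\overline{\Gamma_{U,\vv^n}}=\Gamma_{U,\vv^n}\cup\cc'$, and since $b_i=b_i'$ together with $b_i=-b_i'$ forces $b_i=b_i'=0$ we get $\cc_0=\cc\cap\cc'\subseteq\cc'\subseteq\overline{\Gamma_{U,\vv^n}}$.

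For (i), Dufresne--Kraft's Theorem~7.5 writes $\mathcal{S}_{U,W}$ as $\overline{\Gamma_{U,W}}$ together with a degenerate piece supported on the common zero locus of the linear invariants, which here is $\ww^n\times\ww^n$. The substantive input is their description of $\C[V_2^{\oplus n}]^{\Ga}$, from which one extracts that \emph{every} invariant of $\vv^n$ restricts on $\ww^n$ to a polynomial in $b_1,\dots,b_n$ alone --- equivalently, the restriction map $\C[\vv^n]^U\to\C[\ww^n]$ is \emph{not} onto $\C[\ww^n]^U$, as it misses the Pl\"ucker expressions $b_ic_j-b_jc_i$. Granting this, a pair in $\ww^n\times\ww^n$ is unseparated iff its $b$-coordinates agree, so the degenerate piece is exactly $\cc$. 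Off $\ww^n\times\ww^n$ one argues directly that unseparated pairs lie in $\Gamma_{U,\vv^n}$: if $a_1\neq 0$, the invariants $a_i$, $a_1b_i-a_ib_1$, $\Tr(X_iX_j)$ and $b_i^2+a_ic_i$ determine a unique $u$ carrying $\bA$ to $\bA'$. Combining, $\mathcal{S}_{U,\vv^n}=\overline{\Gamma_{U,\vv^n}}\cup\cc$.

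I expect the main obstacle to be precisely this last invariant-theoretic point: confirming, via Dufresne--Kraft's classification (or a direct transvectant computation showing that the ``source'' of every $\SL_2$-covariant of $V_2^{\oplus n}$ vanishes on $\ww^n$ unless it already lies in $\C[b_1,\dots,b_n]$), that no polynomial invariant of $\vv^n$ detects the $c_i$'s on $\ww^n$. The remaining work --- reconciling Dufresne--Kraft's weight and normalization conventions with ours, and checking that the degenerate stratum and the graph boundary are literally the closed, irreducible sets $\cc$ and $\cc'$ (each being an affine space) rather than locally closed pieces whose closures must be taken --- is routine bookkeeping.
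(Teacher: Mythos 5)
Your approach is the same as the paper's: identify $\vv\cong V_2$ as a $U$-module and then read off Dufresne--Kraft's description of $\mathcal{S}_{\Ga,V_2^{\oplus n}}$ and of $\overline{\Gamma_{\Ga,V_2^{\oplus n}}}$; the paper gives no further argument and simply cites \cite[Theorem~7.5, Lemma~7.6]{DufresneKraft}. Your added computations are all correct (I checked the degeneration realizing $\cc'$ under $u=1/t$, and the off-$\ww^n$ uniqueness of $u$), so you have actually supplied more detail than the paper does.

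The one point you flag as ``granted'' --- that every $f\in\C[\vv^n]^U$ restricts on $\ww^n=\{a_1=\cdots=a_n=0\}$ to an element of $\C[b_1,\dots,b_n]$ --- is true and can be closed without invoking Dufresne--Kraft's classification, using exactly the highest-weight observation you gesture at. Since $T$ normalizes $U$, we may take $f$ to be $T$-homogeneous; then $f$ is a $U$-invariant in a rational $\SL_2$-module, hence a highest-weight vector, hence of $T$-weight $\geq 0$. Under $T$, the coordinate function $a_i$ has weight $2$, $b_i$ weight $0$, and $c_i$ weight $-2$, so a monomial $a^\alpha b^\beta c^\gamma$ has weight $2(|\alpha|-|\gamma|)$. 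Restricting to $a=0$ kills every term with $|\alpha|>0$; a surviving term therefore has weight $-2|\gamma|$, which must equal $\wt(f)\geq 0$, forcing $|\gamma|=0$ and $\wt(f)=0$. Thus $f|_{\ww^n}\in\C[b_1,\dots,b_n]$ (and in fact only the weight-$0$, i.e.\ $G$-invariant, part of $f$ contributes), which is precisely what your argument for (i) needs. With that one lemma inserted, your proof is complete.
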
 

Note also that $T \cong \Gm(\C)$, the multiplicative group of $\C$. The representation theory of this group is very straightforward; the indecomposable modules are all 1-dimensional, isomorphic to $W_z$ for some $z \in \Z$ where $W_z = \langle v \rangle$ and the action of $t \in T$ is given by
\[t \bullet v = t^z v.\]
Now it's easy to see that $\vv \cong V_0 \oplus V_0 \oplus V_2 \oplus V_{-2}$. Moreover, $\ww$ is a direct summand of $\vv$ isomorphic to $V_0 \oplus V_0 \oplus V_2$.

The separating variety for arbitary linear actions of algebraic tori $\Gm^k$ was considered by Dufresne and Jeffries. In our case the action is particularly simple, and from careful reading of \cite[Lemma~3.5]{DufresneJeffriesTori} we obtain:

\begin{prop}\label{tvar}\
\begin{itemize}
\item[(i)] We have $\mathcal{S}_{T,\ww^n} = \cc.$

\item[(ii)] $(\bA,\bA') \in \overline{\Gamma_{T,\ww^n}}$ if and only if $(\bA,\bA') \in \mathcal{C}$ and in addition $c_ic'_j = c_jc'_i$ for all $1 \leq i,j \leq n$.
\end{itemize}
\end{prop}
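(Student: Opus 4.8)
The plan is to make everything explicit in the coordinates already set up. A point of $\ww^n$ is an $n$-tuple $\bA$ with all $a_i = 0$, so it is determined by $b_1,\dots,b_n$ and $c_1,\dots,c_n$. A one-line computation of the conjugation action of $\diag(t,t^{-1})$ shows that each $b_i$ is $T$-invariant while each $c_i$ has weight $2$; hence every non-constant monomial in the $c_i$ has nonzero weight, and so $\C[\ww^n]^T = \C[b_1,\dots,b_n]$. Part (i) is then immediate: $(\bA,\bA')\in\mathcal{S}_{T,\ww^n}$ if and only if $b_i = b'_i$ for all $i$, which is precisely the condition defining $\cc$. (Alternatively, (i) and the shape of $\overline{\Gamma_{T,\ww^n}}$ can be extracted from \cite[Lemma~3.5]{DufresneJeffriesTori}, but the direct route is shorter here.)

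For part (ii) I would first record that $\Gamma_{T,\ww^n}$ consists of exactly those $(\bA,\bA')\in\ww^n\times\ww^n$ with $b_i=b'_i$ for all $i$ and $c'_i = sc_i$ for all $i$ and some common $s\in\C^*$, since conjugation by $\diag(t,t^{-1})$ rescales every $c_i$ by $t^2$ and $t^2$ runs over all of $\C^*$. Let $Z$ denote the closed subset of $\cc$ cut out by the equations $c_ic'_j = c_jc'_i$ for all $i,j$. One inclusion is clear: if $c'_i = sc_i$ for a common scalar then $c_ic'_j = sc_ic_j = c_jc'_i$, so $\Gamma_{T,\ww^n}\subseteq Z$, and therefore $\overline{\Gamma_{T,\ww^n}}\subseteq Z$ because $Z$ is closed.

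The substantive direction is $Z\subseteq\overline{\Gamma_{T,\ww^n}}$. Given $(\bA,\bA')\in Z$, the vanishing of the minors $c_ic'_j - c_jc'_i$ says precisely that the vectors $(c_1,\dots,c_n)$ and $(c'_1,\dots,c'_n)$ are linearly dependent over $\C$. I would then distinguish three cases. If $(c_i)_i\neq 0$ and $(c'_i)_i = \lambda(c_i)_i$ with $\lambda\neq 0$, then $(\bA,\bA')\in\Gamma_{T,\ww^n}$ outright. If $(c'_i)_i = 0$, then $(\bA,\bA')$ is the limit as $s\to 0$ of the family $(\bA,\bA_s)\in\Gamma_{T,\ww^n}$, where $\bA_s$ has the same $b$-coordinates as $\bA$ and $c$-coordinates $sc_i$. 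If $(c_i)_i = 0$, then $(\bA,\bA')$ is the limit as $\varepsilon\to 0$ of $(\bA'_\varepsilon,\bA')\in\Gamma_{T,\ww^n}$, where $\bA'_\varepsilon$ has the same $b$-coordinates as $\bA'$ and $c$-coordinates $\varepsilon c'_i$. Linear dependence of the two vectors forces exactly one of these three situations, so $Z\subseteq\overline{\Gamma_{T,\ww^n}}$ and the two sets coincide, which is (ii).

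The computations are all routine; the only place needing a little care — and the nearest thing to an obstacle — is the treatment of the two degenerate cases. There one must check that the stated one-parameter families genuinely lie in $\Gamma_{T,\ww^n}$ (i.e. that the scaling parameter stays in $\C^*$) and invoke the standard fact that the limit of a morphism $\C^*\to\Gamma_{T,\ww^n}$ lies in the Zariski closure $\overline{\Gamma_{T,\ww^n}}$; with that in hand the case analysis closes.
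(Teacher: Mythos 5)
Your proof is correct; the interesting comparison is with how the paper establishes the same statement. The paper gives no explicit argument at all: it derives both (i) and (ii) by ``careful reading'' of Lemma~3.5 in Dufresne--Jeffries, which is a general description of the separating variety and graph closure for linear actions of tori. You instead work entirely in the explicit coordinates $b_i, c_i$. For (i), you observe that the $T$-weight of $b_i$ is $0$ and that of $c_i$ is $2$, so $\C[\ww^n]^T=\C[b_1,\dots,b_n]$, which immediately gives $\mathcal{S}_{T,\ww^n}=\cc$. For (ii), you identify $\Gamma_{T,\ww^n}$ as the set of pairs with equal $b$-coordinates and $c'_i=sc_i$ for a single $s\in\C^*$, show $\Gamma_{T,\ww^n}\subseteq Z$ where $Z$ is cut out by the $2\times 2$ minors $c_ic'_j-c_jc'_i$, note $Z$ is closed, and then prove $Z\subseteq\overline{\Gamma_{T,\ww^n}}$ by the case analysis on whether $\bc$, $\bc'$ vanish, with explicit one-parameter degenerations in the two degenerate cases. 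This is a complete and more elementary, self-contained argument than the paper's citation; what the paper's route buys is generality (the Dufresne--Jeffries lemma covers arbitrary $\Gm^k$-actions, not just the rank-one case with all nonzero weights equal), while yours buys transparency at no real cost since the action here is so simple.

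Two small remarks for polish. First, the invariant-ring computation in (i) silently uses that all $c_i$ have the \emph{same} nonzero weight, so no monomial involving a $c_i$ can have weight $0$; that is worth saying explicitly, since for a general weight pattern the conclusion would fail. Second, in the case analysis the three cases are exhaustive but not disjoint (both $\bc$ and $\bc'$ may be $0$); your phrase ``forces exactly one of these three situations'' should be softened to ``at least one,'' though this does not affect the argument.
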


Our goal in this section is to describe the separating variety $\mathcal{S}_{G,\vv^n}$. As observed in the previous section, one irreducible component is given by the the Zariski closure of the graph 
$$\Gamma_{G,\vv^n} = \{(\bA,g \cdot\bA): \bA \in \vv^n, g \in G\}$$
but there may be more. The description of $\mathcal{S}_{G,\vv^n}$ as the set of pairs of $n$-matrices whose orbit closures intersect makes it clear that the separating variety is fixed by the action of $G \times G$. It is also clear that $\Gamma_{G,\vv^n}$ is fixed by the action of $G \times G$. The commuting action of $\GL_n(\C)$ on $\vv^n$ induces a diagonal action on $\vv^n \times \vv^n$ which commutes with the action of $G \times G$. The following simple observations are crucial:

\begin{Lemma}\label{reducegamma} Suppose $(\bA,\bA') \in \overline{\Gamma_{G,\vv^n}}$. Then $(h \star \bA, h \star \bA') \in \overline{\Gamma_{\vv^n}}$ for all $h \in \GL_n(\C)$.
\end{Lemma}

\begin{proof} Let $(\bA,\bA') \in \overline{\Gamma_{G,\vv^n}}$. Then there exist morphisms $\bA: \C^* \rightarrow \vv^n$, $g: \C^* \rightarrow G$ such that
\[\bA = \lim_{t \rightarrow 0} \bA(t), \bA' = \lim_{t \rightarrow 0} g(t) \cdot \bA(t).\]
Let $h \in \GL_n(\C)$, then
\[h \star \bA = \lim_{t \rightarrow 0} (h \star \bA(t)), \bA' = \lim_{t \rightarrow 0}h \star( g(t) \cdot \bA(t)) = \lim_{t \rightarrow 0}  g(t) \cdot(h \star \bA(t))).\]
\end{proof}

\begin{Lemma}\label{reducesepvar} Let $(\bA,\bA') \in \mathcal{S}_{G,\vv^n}$ and let $h \in \GL_n(\C)$. Then $(h \star \bA, h \star \bA') \in \mathcal{S}_{G,\vv^n}$.  
\end{Lemma}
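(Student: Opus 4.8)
The plan is to exploit the fact, already noted in the text, that the commuting $\GL_n(\C)$-action $\star$ on $\vv^n$ commutes with the conjugation action of $G$; the upshot will be that $\star$ preserves $\C[\vv^n]^G$, so whether a pair of points lies in $\mathcal{S}_{G,\vv^n}$ is unaffected by applying $\star$.

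First I would record that $\star$ induces an action of $\GL_n(\C)$ on $\C[\vv^n]$ by algebra automorphisms via the same recipe \eqref{action}, namely $(h \star f)(\bA) = f(h^{-1} \star \bA)$. Since the $\star$-action of $\GL_n(\C)$ on $\vv^n$ commutes with the conjugation action of $G$ on $\vv^n$, the two induced actions on $\C[\vv^n]$ commute as well; in particular $h \star f \in \C[\vv^n]^G$ whenever $f \in \C[\vv^n]^G$ and $h \in \GL_n(\C)$.

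Granting this, the proof is a one-line computation. Let $(\bA,\bA') \in \mathcal{S}_{G,\vv^n}$, so that $f(\bA) = f(\bA')$ for all $f \in \C[\vv^n]^G$, and fix $h \in \GL_n(\C)$. Then for any $f \in \C[\vv^n]^G$ we have $h^{-1} \star f \in \C[\vv^n]^G$, hence
\[ f(h \star \bA) = (h^{-1} \star f)(\bA) = (h^{-1} \star f)(\bA') = f(h \star \bA'), \]
so $(h \star \bA, h \star \bA') \in \mathcal{S}_{G,\vv^n}$, as required. There is essentially no obstacle here: the whole content is the observation that $\star$ preserves $\C[\vv^n]^G$, which is immediate from the commutativity of the $\star$- and conjugation actions. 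Alternatively one could argue geometrically, using that $G = \SL_2(\C)$ is reductive so that $\mathcal{S}_{G,\vv^n}$ consists of the pairs $(\bA,\bA')$ with $\overline{G\bA} \cap \overline{G\bA'} \neq \emptyset$: since $h \star (-)$ is a linear automorphism of $\vv^n$ commuting with the $G$-action it satisfies $h \star \overline{G\bA} = \overline{G(h \star \bA)}$, so a point lying in both orbit closures is carried to a point lying in both translated orbit closures. Either way the argument parallels, and is slightly simpler than, the one for $\overline{\Gamma_{G,\vv^n}}$ in Lemma \ref{reducegamma}.
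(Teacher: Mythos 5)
Your proof is correct and is essentially identical to the paper's: both introduce the induced $\GL_n(\C)$-action on $\C[\vv^n]$, note that it preserves $\C[\vv^n]^G$, and conclude via the one-line computation $f(h\star\bA)=(h^{-1}\star f)(\bA)=(h^{-1}\star f)(\bA')=f(h\star\bA')$. The alternative geometric argument you sketch is a nice extra but is not the route the paper takes.
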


\begin{proof} The action of $\GL_n(\C)$  on $\vv^n$ induces an action on $\C[\vv^n]$: we define
\[(h \star f)(v) = f(h^{-1} \star \bA)\] for $h \in \GL_n(\C), f \in \C[\vv^n]$ and $\bA \in \vv^n$. If $f \in \C[\vv^n]^G$, then so is $h \star f$ for all $h \in \GL_n(\C)$. Now suppose $(\bA,\bA') \in \mathcal{S}_{G,\vv^n}$. Let $h \in \GL_n(\C)$ and $f \in \C[\vv^n]^G$. Then we have
\[f(h \star \bA) = (h^{-1} \star f)(\bA) =(h^{-1} \star f)(\bA') = f(h \star \bA')\] which shows that $(h \star \bA, h \star \bA') \in \mathcal{S}_{G,\vv^n}$ as required.
\end{proof}

These observations allow us to apply a sort of simultaneous column-reduction to elements of $\cc$ and $\cc'$. In more detail, to every pair $(\bA,\bA') \in \cc$ or $\cc'$ we assign a $3 \times n$ matrix
\begin{equation}\label{manddelta} m_{\bA,\bA'}:= \begin{pmatrix} b_1 & b_2 & \cdots &b_n\\ c_1 & c_2 & \cdots & c_n\\ c'_1 & c'_2 & \cdots & c'_n   \end{pmatrix}.\end{equation} 
Denote the minor of $m_{\bA,\bA'}$ obtained by taking only the $i$th, $j$th and $k$th columns by $\Delta_{ijk}(\bA,\bA')$. Finally we define
\[\widehat{\cc}:= \{(\bA,\bA') \in \cc: \rk(m_{\bA,\bA'}) \leq 2\},\]
and
\[\widehat{\cc'}:= \{(\bA,\bA') \in \cc': \rk(m_{\bA,\bA'}) \leq 2\}.\]

Then we have:
\begin{Lemma}\label{reducecols} The following are equivalent:
\begin{enumerate}
\item[(i)] $\rk(m_{\bA.\bA'}) \leq 2$;
\item[(ii)] $\Delta_{ijk}(\bA,\bA') = 0$ for all $1 \leq i<j<k \leq n$;
\item[(iii)] There exists $h \in \GL_n$ such that $(h \star  A_i) = (h \star A'_i) = 0$ for all $i \geq 3$. 

\end{enumerate}
\end{Lemma}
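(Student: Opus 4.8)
The plan is to prove the equivalence of the three conditions by establishing $\text{(i)} \Leftrightarrow \text{(ii)}$ as a standard fact about matrix rank, and then $\text{(i)} \Leftrightarrow \text{(iii)}$ via the column-reduction observations already set up (Lemmas \ref{reducegamma} and \ref{reducesepvar}).

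For $\text{(i)} \Leftrightarrow \text{(ii)}$: the matrix $m_{\bA,\bA'}$ has three rows, so $\rk(m_{\bA,\bA'}) \leq 2$ if and only if every $3 \times 3$ minor vanishes, and the $3\times 3$ minors of a $3 \times n$ matrix are precisely the $\Delta_{ijk}(\bA,\bA')$ for $1 \leq i<j<k\leq n$. This is immediate from the theory of determinantal ideals (the ideal of $3\times 3$ minors cuts out the locus of rank $\leq 2$); no computation is needed beyond citing that a matrix has rank $<r$ iff all $r\times r$ minors vanish.

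For $\text{(iii)} \Rightarrow \text{(i)}$: if there exists $h \in \GL_n$ with $h \star A_i = h \star A'_i = 0$ for all $i \geq 3$, then $m_{h\star\bA, h\star\bA'}$ has at most two nonzero columns, hence rank $\leq 2$. Now the columns of $m_{h\star\bA,h\star\bA'}$ are obtained from those of $m_{\bA,\bA'}$ by right-multiplication by the matrix of $h$ (acting on $\C^n$), since the $b_i, c_i, c'_i$ all transform the same way under $\star$ — this is the key compatibility: $\star$ acts simultaneously on the coefficient vectors. Thus $\rk(m_{\bA,\bA'}) = \rk(m_{h\star\bA,h\star\bA'}) \leq 2$, as column operations (multiplication by an invertible matrix on the right) preserve rank. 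Conversely, for $\text{(i)} \Rightarrow \text{(iii)}$: if $\rk(m_{\bA,\bA'}) \leq 2$, choose $h \in \GL_n$ so that right-multiplying $m_{\bA,\bA'}$ by (the transpose of) the matrix of $h$ brings it to a form whose last $n-2$ columns are zero — this is just column reduction on a $3\times n$ matrix of rank $\leq 2$, which is always possible. Since the entries $b_i, c_i, c'_i$ of $A_i, A'_i$ all transform identically under $\star$, and $A_i, A'_i$ are upper triangular with zero diagonal entry apart from $\pm b_i$ — wait, more precisely $A_i \in \ww$ so $A_i = \begin{pmatrix} b_i & c_i \\ 0 & -b_i\end{pmatrix}$ — having the $i$th column of $m$ equal to zero for $i \geq 3$ means $b_i = c_i = c'_i = 0$, i.e. $h\star A_i = h \star A'_i = 0$. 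So $h$ does the job.

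The main obstacle — or rather the point requiring care — is bookkeeping the exact relationship between the $\star$-action on $(\bA,\bA')$ and the column-action on the matrix $m_{\bA,\bA'}$: one must check that $\star$ by $h$ corresponds to multiplying $m_{\bA,\bA'}$ on the right by the $n \times n$ matrix representing $h$ (up to transpose/inverse conventions), using that $h \star \bA$ has $i$th column a fixed linear combination (with coefficients from $h$) of the columns of $\bA$, and that this is compatible across the three rows $(b_i)$, $(c_i)$, $(c_i')$ since each of these is an $h$-linear combination in the same way. Once that dictionary is in place, everything reduces to the elementary fact that an invertible column operation preserves rank and can clear all but $\rk$-many columns, together with the determinantal characterization of rank.
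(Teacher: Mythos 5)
Your proof is correct and follows essentially the same route as the paper's: (i)$\Leftrightarrow$(ii) is the standard determinantal characterization of rank, and the key observation for (i)$\Leftrightarrow$(iii) is that the $\star$-action of $\GL_n$ on $(\bA,\bA')$ corresponds exactly to column operations on $m_{\bA,\bA'}$, since the three rows of $m$ are the coefficient vectors $\mathbf{a}_{11}=(b_i)$, $\mathbf{a}_{12}=(c_i)$, $\mathbf{a}'_{12}=(c'_i)$, each of which transforms under $h$ identically. The paper states this dictionary via elementary matrices $E_{ij}(\lambda)$ and column echelon form, but the substance is the same as your more abstract phrasing via right-multiplication by an invertible matrix.
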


\begin{proof} The equivalence of (i) and (ii) is well-known. Let $E_{ij}(\lambda)$ denote the elementary matrix with $\lambda$ in position $i,j$, 1's on the diagonal and zeroes elsewhere. The action of $E_{ij}(\lambda)$ on $(\bA,\bA')$ is to replace $A_i$ and $A'_i$ by $A_i+\lambda A_j$ and $A'_i+\lambda A_j' $ respectively. The effect on $m_{\bA,\bA'}$ is to replace the $i$th column with the $i$th column plus $\lambda$ times the $j$th column. The sequence of column operations which place $m_{\bA,\bA'}$ in column echelon form produce  $(h \star \bA, h \star\bA')$ of the form claimed.
\end{proof}

Now we consider the $G$-orbit structure of $\vv$. The following result is key to obtaining a decomposition of the separating variety $\mathcal{S}_{G,\vv^n}$:

\begin{Lemma}\label{orbits}
Suppose $\bA \in \vv^n \setminus G  \cdot \ww^n$. Then $G \cdot \bA$ is closed
\end{Lemma}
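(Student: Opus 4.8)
The plan is to use the Hilbert–Mumford criterion together with the concrete description of the $G$-action on $\vv$. Recall that for a reductive group $G$ acting on an affine variety, an orbit $G \cdot \bA$ fails to be closed if and only if there exists a one-parameter subgroup $\lambda : \Gm \to G$ such that $\lim_{t \to 0} \lambda(t) \cdot \bA$ exists and does not lie in $G \cdot \bA$. Since every one-parameter subgroup of $G = \SL_2(\C)$ is conjugate to one landing in the standard torus $T$, we may assume after replacing $\bA$ by a conjugate that $\lambda(t) = \diag(t, t^{-1})$ (or its inverse). So the first step is: \emph{if $G \cdot \bA$ is not closed, then some conjugate $g \cdot \bA$ has a limit under the $T$-action $\diag(t,t^{-1})$ as $t \to 0$.}

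The second step is to compute this limit explicitly. Writing $A_i = \begin{pmatrix} b_i & c_i \\ a_i & -b_i \end{pmatrix}$, one has
\[
\diag(t,t^{-1}) \cdot A_i = \begin{pmatrix} b_i & t^2 c_i \\ t^{-2} a_i & -b_i \end{pmatrix},
\]
so the limit as $t \to 0$ exists if and only if $a_i = 0$ for all $i$, i.e. if and only if $g \cdot \bA \in \ww^n$, the space of upper-triangular trace-zero $n$-matrices. (If instead one uses $\lambda(t)^{-1}$ the condition becomes $c_i = 0$ for all $i$, which lands in the space of lower-triangular matrices, and a Weyl-group element conjugates this back into $\ww^n$.) Hence if $G \cdot \bA$ is not closed, then $\bA \in G \cdot \ww^n$, which is exactly the contrapositive of the statement. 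So the key computation is just this $2 \times 2$ conjugation together with the reduction of an arbitrary one-parameter subgroup to a torus one.

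I expect the main obstacle to be stating the reduction step cleanly: one must be careful that the destabilising one-parameter subgroup can be taken inside $T$ only after conjugating $\bA$, and that the two choices of sign for the cocharacter of $T$ give the upper- and lower-triangular cases, the latter being $G$-conjugate (via $\begin{pmatrix} 0 & 1 \\ -1 & 0 \end{pmatrix}$) to the upper-triangular case. Everything else is elementary. An alternative, essentially equivalent route avoiding Hilbert–Mumford explicitly: a trace-zero $2 \times 2$ matrix is either nilpotent or diagonalisable; using simultaneous triangularisation one checks that $\bA \notin G \cdot \ww^n$ forces the $A_i$ to generate (as an algebra) a subalgebra of $\M$ acting irreducibly on $\C^2$, and irreducibility of the associated representation implies the orbit is closed by a standard argument (closed orbits of $\GL_d$ acting by simultaneous conjugation correspond to semisimple modules). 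I would present the Hilbert–Mumford version as it is shortest and most self-contained given the torus computations already set up in Proposition \ref{tvar}.
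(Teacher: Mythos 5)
Your argument is correct and follows essentially the same route as the paper: both use the Hilbert--Mumford criterion, reduce an arbitrary destabilising one-parameter subgroup to a cocharacter of the standard torus $T$ after conjugating $\bA$, and then read off from the explicit computation $\diag(t,t^{-1})\cdot A_i = \begin{pmatrix} b_i & t^2 c_i \\ t^{-2}a_i & -b_i\end{pmatrix}$ that the limit as $t \to 0$ exists only when all $a_i$ vanish, i.e.\ when the conjugate of $\bA$ lies in $\ww^n$. The only cosmetic differences are that you phrase Hilbert--Mumford via existence of a limit lying outside the orbit (Kempf/Birkes form) and argue by contrapositive, whereas the paper phrases it via positivity of the weight $\mu(g\cdot\bA,\lambda_e)$ and concludes stability directly; these are two standard formulations of the same criterion.
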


\begin{proof} We use the Hilbert-Mumford criterion \cite{Mumford}. Let $\lambda: \C^* \rightarrow G$ be a one-parameter subgroup. The Hilbert-Mumford weight of $\bA$ with respect to $\lambda$ is the unique smallest integer $\mu(\bA,\lambda)$ such that $$\lim_{t \rightarrow 0} t^{\mu(\bA,\lambda)} (\lambda(t) \cdot \bA)$$ exists. $\bA$ is stable (i.e. has closed orbit and finite stabiliser) if and only if $\mu(\bA,\lambda)>0$ for all one-paramter subgroups $\lambda$ of $G$. Now each one-parameter subgroup of $G$ is of the form
\[\lambda_g(t) = g \cdot \begin{pmatrix} t& 0 \\ 0 & t^{-1} \end{pmatrix}\]
for some $g \in G$. Letting $e \in G$ denote the identity, it follows that $\bA$ is stable if and only if $\mu(g \cdot \bA,\lambda_e)>0$ for all $g \in G$. 

Let $g \in G$ and suppose $g \cdot \bA \not \in \ww^n$. Writing $g \cdot \bA = \bA'$, we see that $a'_i \neq 0$ for some $i=1, \ldots, n$. Therefore $\mu(g \cdot \bA,\lambda_e) = 2> 0$. Since $g$ was arbitrary we conclude that $\bA$ is stable, and in particular $G \cdot \bA$ is closed.
\end{proof}

\begin{rem}
One can complete the analysis of orbit closures in $\vv^n$: it turns out that $G \cdot \bA$ is not closed if and only if $\bA \in G \cdot \ww^n  \setminus \{\mathbf{0}\}$ where $\mathbf{0}$ represents the $n$-matrix consisting entirely of zero matrices. See \cite[Proposition~8.9]{DrezetLuna} for a proof in a more general case.
\end{rem}

An $n$-matrix with all traces equal to zero belongs to $G \cdot \ww^n$ if and only if there exists $g \in G$ such that $g \cdot A_i$ is upper triangular for all $i$. Such matrices may be called {\it simultaneously (upper)-triangularisable}. Since an matrix is upper triangularisable if and only if it is lower triangularisable via the action of $$\begin{pmatrix} 0 & 1 \\ -1 & 0 \end{pmatrix}$$ we usually speak simply of simultaneously triangularisable matrices.

Simultaneous triangularisation of $2 \times 2$ matrices was studied by Florentino \cite{florentino2x2}. He showed that arbitrary  (i.e. not trace-free) $n$-matrices are upper triangularisable if and only if
\[\Tr(A_iA_jA_k) = \Tr(A_kA_jA_i)\] for all $i<j<k$ and 
\[\det(A_iA_j-A_jA_i) = 0\] for all $i<j$.
But for tracefree matrices, Equation \eqref{tripletraceasdet} implies that $$\Tr(A_iA_jA_k) = -\Tr(A_kA_jA_i).$$ Further, for tracefree matrices we can show \[ \det(A_iA_j-A_jA_i) =   4\det (A_i) \det(A_j) - \Tr(A_iA_j)^2.\] It follows that $G \cdot \ww^n$ is the subvariety of $\vv^n$ cut out by the set of polynomials
\[\{t_{ijk}: 1 \leq i<j<k \leq n\} \cup \{t_{ii}t_{jj} - t_{ij}^2: 1 \leq i<j \leq n\}.\]

In particular, this shows that $G \cdot \ww^n$ is closed. It is also irreducible, being the orbit of a connected algebraic group on a vector space. As a first step towards a decomposition of $\mathcal{S}_{G,\vv^n}$ we have the following. 

\begin{Lemma}\label{decompsgv}$\mathcal{S}_{G,\vv} = \overline{\Gamma_{G,\vv}} \cup ((G \cdot \ww^n \times G \cdot \ww^n) \cap \mathcal{S}_{G,\vv^n})$. Moreover, $\overline{\Gamma_{G,\vv^n}}$ and $(G \cdot \ww^n \times G \cdot \ww^n) \cap \mathcal{S}_{G,\vv^n}$ are closed and $\overline{\Gamma_{G,\vv^n}}$ is  irreducible.
\end{Lemma}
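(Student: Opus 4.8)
The plan is to show that every pair $(\bA,\bA') \in \mathcal{S}_{G,\vv^n}$ either lies in $\overline{\Gamma_{G,\vv^n}}$ or has both coordinates in the closed cone $G\cdot\ww^n$ of simultaneously triangularisable $n$-matrices. The dichotomy should come from Lemma~\ref{orbits}: if $\bA \notin G\cdot\ww^n$, then $G\cdot\bA$ is closed, and then the standard theory of the separating variety for a reductive group forces $\bA'$ into the same orbit, putting $(\bA,\bA')$ in the graph. More precisely, $(\bA,\bA') \in \mathcal{S}_{G,\vv^n}$ means $\overline{G\bA} \cap \overline{G\bA'} \neq \emptyset$ (by \cite[Corollary~6.1]{Dolgachev}, since $G = \SL_2(\C)$ is reductive). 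If $G\bA$ is closed, then $\overline{G\bA} = G\bA$ is a minimal closed orbit in $\overline{G\bA'}$; since the orbit closure of a reductive group contains a \emph{unique} closed orbit, and $\overline{G\bA'}$ is irreducible, this forces $G\bA \subseteq \overline{G\bA'}$, and by the same uniqueness applied with roles reversed — or just because $\dim G\bA \le \dim \overline{G\bA'}$ together with $\bA$ being stable — we in fact get $G\bA = G\bA'$, so $\bA' \in G\bA$ and $(\bA,\bA') \in \Gamma_{G,\vv^n} \subseteq \overline{\Gamma_{G,\vv^n}}$. Symmetrically, if $\bA' \notin G\cdot\ww^n$ the same conclusion holds. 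Hence if $(\bA,\bA')$ is not in $\overline{\Gamma_{G,\vv^n}}$, then both $\bA, \bA' \in G\cdot\ww^n$, giving the inclusion $\subseteq$. The reverse inclusion $\supseteq$ is immediate: $\overline{\Gamma_{G,\vv^n}} \subseteq \mathcal{S}_{G,\vv^n}$ because invariants are constant on $\Gamma_{G,\vv^n}$ and $\mathcal{S}_{G,\vv^n}$ is closed, and $(G\cdot\ww^n \times G\cdot\ww^n) \cap \mathcal{S}_{G,\vv^n} \subseteq \mathcal{S}_{G,\vv^n}$ trivially.

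For the ``moreover'' part: $\overline{\Gamma_{G,\vv^n}}$ is irreducible because it is the closure of the image of the irreducible variety $G \times \vv^n$ under the morphism $(g,\bA) \mapsto (\bA, g\cdot\bA)$, and it is closed by definition of the closure. The set $\mathcal{S}_{G,\vv^n}$ is closed as the common zero locus of the polynomials $\delta(f)$, $f \in \C[\vv^n]^G$ (Proposition~\ref{radversion}); and $G\cdot\ww^n$ is closed by the explicit description derived above from Florentino's criterion \cite{florentino2x2}, namely it is the zero locus of $\{t_{ijk}\} \cup \{t_{ii}t_{jj} - t_{ij}^2\}$. Hence $G\cdot\ww^n \times G\cdot\ww^n$ is closed in $\vv^n \times \vv^n$, and its intersection with the closed set $\mathcal{S}_{G,\vv^n}$ is closed.

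The main obstacle I anticipate is making the orbit-matching step fully rigorous: one must be careful that $\overline{G\bA}\cap\overline{G\bA'}\neq\emptyset$ together with $G\bA$ closed really forces $G\bA = G\bA'$ and not merely $G\bA \subseteq \overline{G\bA'}$. The cleanest route is to invoke that for a reductive group every fibre of the quotient morphism $\vv^n \to \vv^n /\!\!/ G$ contains a unique closed orbit: $(\bA,\bA') \in \mathcal{S}_{G,\vv^n}$ says exactly that $\bA$ and $\bA'$ map to the same point of the quotient, so they lie in the same fibre; if $G\bA$ is closed it \emph{is} that unique closed orbit, and $\overline{G\bA'}$ also contains it, whence — since $\bA$ is stable (Lemma~\ref{orbits} gives finite stabiliser) so $\dim G\bA = \dim \vv^n - \dim G$ is maximal among orbits in the fibre, forcing $G\bA'$ closed too and equal to $G\bA$. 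I would also double-check the claimed identity $\det(A_iA_j - A_jA_i) = 4\det(A_i)\det(A_j) - \Tr(A_iA_j)^2$ for trace-free $2\times 2$ matrices, which underpins the closedness of $G\cdot\ww^n$; this is a short direct computation using $\Tr(X_i^2) = -2\det(X_i)$ and the Cayley--Hamilton relation $X_i^2 = -\det(X_i) I$.
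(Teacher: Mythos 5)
Your proof is correct and follows essentially the same route as the paper: use Lemma~\ref{orbits} to create the dichotomy (one of the orbits is closed versus both $n$-matrices lie in $G\cdot\ww^n$), and use the characterisation of $\mathcal{S}_{G,\vv^n}$ as pairs whose orbit closures intersect. The only real difference is in the orbit-matching step. The paper contents itself with the weaker observation that if, say, $G\bA$ is closed, then $G\bA \subseteq \overline{G\bA'}$ gives $\bA \in \overline{G\bA'}$ and hence $(\bA,\bA') \in \overline{\Gamma_{G,\vv^n}}$ (using symmetry of the graph). You instead push through to $G\bA' = G\bA$, i.e.\ $(\bA,\bA') \in \Gamma_{G,\vv^n}$, by exploiting the full strength of Lemma~\ref{orbits} --- not just that $G\bA$ is closed, but that $\bA$ is stable, so $\dim G\bA$ is maximal and $G\bA$ cannot sit in the strictly smaller-dimensional boundary of $\overline{G\bA'}$. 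This is a cleaner and slightly stronger conclusion, though not needed for the lemma. One typo worth fixing: you write $\dim G\bA = \dim\vv^n - \dim G$, but the orbit dimension is $\dim G - \dim G_{\bA} = \dim G$ (since the stabiliser is finite); the maximality claim that follows is still correct. The ``moreover'' part and the verification of the determinant identity underlying the closedness of $G\cdot\ww^n$ are fine.
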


\begin{proof}
Let $(\bA,\bA') \in \mathcal{S}_{G,\vv^n}$. Then $\overline{G \cdot \bA} \cap \overline{G \cdot \bA'} \neq \emptyset$. Unless $\bA \in G \cdot \ww^n$ and $\bA' \in G \cdot \ww^n$, then by Lemma \ref{orbits}, the orbit of either $\bA$ or $\bA'$ is closed and $(\bA,\bA') \in \overline{\Gamma_{G,\vv^n}}$ (if both orbits are closed, $(\bA,\bA') \in \Gamma_{G,\vv^n}).$ This shows that $\mathcal{S}_{G,\vv^n} \subseteq \overline{\Gamma_{G,\vv^n}} \cup  ((G \cdot \ww^n \times G \cdot \ww^n) \cap \mathcal{S}_{G,\vv^n})$. Clearly $\overline{\Gamma_{G,\vv^n}}$ is closed and irreducible. $(G \cdot \ww^n \times G \cdot \ww^n) \cap \mathcal{S}_{G,\vv^n}$ is closed as it is the intersection of two closed sets.
\end{proof}

The following Lemma describes $(G \cdot \ww^n \times G \cdot \ww^n) \cap \mathcal{S}_{G,\vv^n}$ as a union of sets. Note that at this stage we do not know whether $(G \cdot \ww^n \times G \cdot \ww^n) \cap \mathcal{S}_{G,\vv^n}$ is irreducible, as the sets on the right hand side may not themselves be closed.  

\begin{Lemma}\label{decompgwgwsgv} We have $$(G \cdot \ww^n \times G \cdot \ww^n) \cap \mathcal{S}_{G,\vv^n} = (G \times G) \cdot \cc \cup (G \times G) \cdot \cc'.$$
\end{Lemma}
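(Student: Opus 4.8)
The plan is to prove the two inclusions separately. In both directions the engine is the following: the algebra $\C[\vv^n]^G$ is generated by the $t_{ij}$ and $t_{ijk}$ (Proposition~\ref{gens}), the separating variety $\mathcal{S}_{G,\vv^n}$ is stable under the $(G\times G)$-action, and these generators take a very simple form on $\ww^n$. Concretely, if $\bA=(A_1,\dots,A_n)\in\ww^n$ then $a_i=0$ for all $i$, so \eqref{tripletraceasdet} gives $t_{ijk}(\bA)=0$ for all $i<j<k$, while $t_{ij}(\bA)=\Tr(X_iX_j)(\bA)=2b_ib_j$. Since $E_n$ generates $\C[\vv^n]^G$, it follows that two points $\bA,\bA'\in\ww^n$ satisfy $f(\bA)=f(\bA')$ for all $f\in\C[\vv^n]^G$ if and only if $b_ib_j=b_i'b_j'$ for all $1\le i\le j\le n$. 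An elementary argument shows this last condition is equivalent to $(b_1,\dots,b_n)=\pm(b_1',\dots,b_n')$: taking $i=j$ gives $b_i'=\pm b_i$ for each $i$; if $b_i$ and $b_j$ are both nonzero then $b_i'b_j'=b_ib_j$ forces these two signs to agree, so there is a single global sign; and if all $b_i=0$ then all $b_i'=0$. In other words,
\[(\ww^n\times\ww^n)\cap\mathcal{S}_{G,\vv^n}=\cc\cup\cc'.\]

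For the inclusion $\supseteq$, observe that $\cc,\cc'\subseteq\ww^n\times\ww^n$, so $(G\times G)\cdot\cc$ and $(G\times G)\cdot\cc'$ are contained in $G\cdot\ww^n\times G\cdot\ww^n$. Moreover $\cc\cup\cc'\subseteq\mathcal{S}_{G,\vv^n}$ by the displayed equivalence (both $b_i=b_i'$ and $b_i=-b_i'$ imply $b_ib_j=b_i'b_j'$ for all $i\le j$), and since $\mathcal{S}_{G,\vv^n}$ is $(G\times G)$-stable, the entire right-hand side lies in $(G\cdot\ww^n\times G\cdot\ww^n)\cap\mathcal{S}_{G,\vv^n}$. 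For the inclusion $\subseteq$, take $(\bA_0,\bA_0')\in(G\cdot\ww^n\times G\cdot\ww^n)\cap\mathcal{S}_{G,\vv^n}$ and choose $g,g'\in G$ with $\bA:=g^{-1}\cdot\bA_0$ and $\bA':=g'^{-1}\cdot\bA_0'$ both in $\ww^n$. By $(G\times G)$-invariance of $\mathcal{S}_{G,\vv^n}$ we get $(\bA,\bA')\in(\ww^n\times\ww^n)\cap\mathcal{S}_{G,\vv^n}=\cc\cup\cc'$; applying $(g,g')$ then yields $(\bA_0,\bA_0')\in(G\times G)\cdot\cc\cup(G\times G)\cdot\cc'$.

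There is no serious obstacle here; the one point that needs care is methodological rather than technical: one should run the whole argument on $\ww^n\times\ww^n$ directly — using the $(G\times G)$-invariance of the separating variety to reduce to this locus — rather than arguing via $G$-orbit closures, since it is precisely on $\ww^n$ that the invariants collapse to the single family $(b_ib_j)_{i\le j}$. The only thing to verify carefully is then that the resulting global-sign dichotomy matches exactly the two prescribed sets $\cc$ (sign $+$) and $\cc'$ (sign $-$).
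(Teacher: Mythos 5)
Your proof is correct and takes essentially the same approach as the paper: reduce to $\ww^n\times\ww^n$ by $(G\times G)$-invariance, observe that the generating invariants collapse there (the $t_{ijk}$ vanish and $t_{ij}=2b_ib_j$), and read off the global-sign dichotomy. The one small difference is that you make the $\supseteq$ inclusion fully explicit (noting that $\cc\cup\cc'\subseteq\mathcal{S}_{G,\vv^n}$ because $E_n$ generates the invariant ring and all generators agree on such pairs), whereas the paper's write-up handles only the $\subseteq$ direction in detail and leaves the reverse implicit — so your version is, if anything, slightly more complete.
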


\begin{proof} 
Suppose $(\bA, \bA') \in (\ww^n \times \ww^n) \cap \mathcal{S}_{G,\vv^n}$. Then since $\det(A_i) = \det(A'_i)$ for all $i$ we get $b_i = \pm b'_i$. Further, since for each $i \neq j$
\begin{equation}\label{2trace} 2b_ib_j  = \Tr(A_iA_j) = \Tr(A'_iA'_j) = 2b'_ib'_j\end{equation} we have that $b_i=b'_i$ for each $i$ or $b_i = -b_i'$ for each $i$. This shows that  
$(\bA,\bA') \in \cc \cup \cc'$. Since $(\bA,\bA') \in \mathcal{S}_{G,\vv^n}$ if and only $(g \cdot \bA, g' \cdot \bA') \in \mathcal{S}_{G,\vv^n}$ for all $g,g' \in G$ we get that $$(G \cdot \ww^n \times G \cdot \ww^n) \cap \mathcal{S}_{G,V} = (G \times G) \cdot \cc \cup (G \times G) \cdot \cc'$$
 as required.
\end{proof}

\begin{Lemma}\label{losecminus} For all $n \geq 2$ we have $(G \times G) \cdot \cc' \subseteq \overline{\Gamma_{G,\vv^n}}$.
\end{Lemma}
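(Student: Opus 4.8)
The plan is to deduce the statement from the already-established description of the $U$-separating variety in Proposition~\ref{uvar}, where $U$ is the unipotent subgroup of $G$ fixed at the start of this section.

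First I would note that since $U \subseteq G$, any pair of the form $(\bA, u \cdot \bA)$ with $u \in U$ is also of the form $(\bA, g \cdot \bA)$ with $g \in G$; hence $\Gamma_{U,\vv^n} \subseteq \Gamma_{G,\vv^n}$, and passing to Zariski closures gives $\overline{\Gamma_{U,\vv^n}} \subseteq \overline{\Gamma_{G,\vv^n}}$. By Proposition~\ref{uvar}(ii) we have $\cc' \subseteq \overline{\Gamma_{U,\vv^n}}$, so combining the two inclusions yields $\cc' \subseteq \overline{\Gamma_{G,\vv^n}}$.

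Second I would use that $\overline{\Gamma_{G,\vv^n}}$ is stable under the $G \times G$-action on $\vv^n \times \vv^n$. Indeed $\Gamma_{G,\vv^n}$ is $G\times G$-stable, as already observed in the text: for $(h_1,h_2) \in G\times G$ and $(\bA, g\cdot\bA) \in \Gamma_{G,\vv^n}$ one has $(h_1\cdot\bA, h_2 g\cdot\bA) = (\mathbf{B}, (h_2 g h_1^{-1}) \cdot \mathbf{B})$ with $\mathbf{B} := h_1 \cdot \bA$; and since each element of $G\times G$ acts as a homeomorphism of $\vv^n \times \vv^n$, the Zariski closure of a $G\times G$-stable set is again $G\times G$-stable. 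Applying the $G\times G$-action to the inclusion $\cc' \subseteq \overline{\Gamma_{G,\vv^n}}$ obtained above then gives $(G\times G)\cdot\cc' \subseteq (G\times G)\cdot\overline{\Gamma_{G,\vv^n}} = \overline{\Gamma_{G,\vv^n}}$, which is the assertion of the lemma.

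The argument is short, and there is no substantial obstacle: the only points requiring care are the two instances of "closure commutes with the group action", both of which are routine, and checking that the set $\cc'$ appearing in Proposition~\ref{uvar}(ii) is literally the set $\cc'$ of the present statement (the subset of $\ww^n \times \ww^n$ cut out by $b_i = -b_i'$ for all $i$), which it is. If one wished to avoid invoking the closure-stability of $\overline{\Gamma_{G,\vv^n}}$ one could instead write $(G\times G)\cdot\cc'$ as a union of translates $(g,g')\cdot\cc'$ and handle each translate directly via Lemma~\ref{reducegamma}-style limit arguments, but this is unnecessary.
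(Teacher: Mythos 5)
Your proof is correct and follows essentially the same route as the paper's: reduce by $G\times G$-stability of $\overline{\Gamma_{G,\vv^n}}$ to showing $\cc' \subseteq \overline{\Gamma_{G,\vv^n}}$, then invoke Proposition~\ref{uvar}(ii) and the inclusion $\overline{\Gamma_{U,\vv^n}} \subseteq \overline{\Gamma_{G,\vv^n}}$. The extra justifications you spell out (stability of the closure, $U \subseteq G$) are ones the paper leaves implicit, but the argument is the same.
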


\begin{proof}  As $\overline{\Gamma_{G, \vv^n}}$ is stablised by the action of $G \times G$, it is enough to show that $\cc' \subseteq \overline{\Gamma_{G,\vv^n}}$, and this follows from Proposition \ref{uvar} (ii): we have
\[\cc' \subseteq \overline{\Gamma_{U,\vv^n}} \subseteq \overline{\Gamma_{G,\vv^n}}\] as required.
\end{proof}

Combining Lemmas \ref{decompsgv}, \ref{decompgwgwsgv} and \ref{losecminus} we obtain a decomposition
\begin{equation}\label{decompsgvagain}
\mathcal{S}_{G,\vv^n} = \overline{\Gamma_{G,\vv^n}} \cup \overline{(G \times G) \cdot \cc}.
\end{equation}
Note that the stabiliser of the action of $G \times G$ on $\cc$ is $B \times B$, so the dimension of $\overline{(G \times G) \cdot \cc}$ is 
\[\dim(\cc) + 2 \dim(G) - 2 \dim(B) = 3n+2. \]

Since $G$ acts faithfully on $\vv^n$, the dimension of $\overline{\Gamma_{G,\vv^n}}$ is
$$\dim(\vv^n) +\dim(G) = 3n+3.$$

So there are two possiblities: either$\overline{(G \times G) \cdot \cc} \subseteq \overline{\Gamma_{G, \vv^n}}$ and hence $\mathcal{S}_{G,\vv^n}$ has a single irreducible component, or else $\mathcal{S}_{G,\vv^n}$ has two irreducible components of different dimensions. 

The following result is key in our proof of Proposition \ref{mainprop}.


\begin{prop}\label{intersect}\
For $n \geq 2$ we have $(G \times G) \cdot \cc \cap \overline{\Gamma_{G,\vv^n}} = (G \times G) \cdot \widehat{\mathcal{C}}$.
\end{prop}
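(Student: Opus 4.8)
The plan is to prove the two inclusions separately, organised around the auxiliary variety $\Gamma_{B,\ww^n}=\{(\bA,b\cdot\bA):\bA\in\ww^n,\ b\in B\}$, the graph of the conjugation action of the Borel subgroup on $\ww^n$. For the inclusion $(G\times G)\cdot\widehat{\cc}\subseteq(G\times G)\cdot\cc\cap\overline{\Gamma_{G,\vv^n}}$, containment in $(G\times G)\cdot\cc$ is clear since $\widehat{\cc}\subseteq\cc$, so the content is $\widehat{\cc}\subseteq\overline{\Gamma_{G,\vv^n}}$, and for this I would show $\widehat{\cc}=\overline{\Gamma_{B,\ww^n}}$. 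Conjugating $A_i=\begin{pmatrix}b_i&c_i\\0&-b_i\end{pmatrix}$ by $b=\begin{pmatrix}t&u\\0&t^{-1}\end{pmatrix}$ fixes $b_i$ and sends $c_i\mapsto t^2c_i-2tub_i$, so for $(\bA,b\cdot\bA)$ the third row of $m_{\bA,b\cdot\bA}$ is $t^2$ times its second row minus $2tu$ times its first; hence $\rk(m_{\bA,b\cdot\bA})\le 2$ and $\Gamma_{B,\ww^n}\subseteq\widehat{\cc}$. Since $\widehat{\cc}$ is closed this gives $\overline{\Gamma_{B,\ww^n}}\subseteq\widehat{\cc}$. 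Both sides are irreducible — the left as the closure of the image of $B\times\ww^n$, the right because $(\bA,\bA')\mapsto m_{\bA,\bA'}$ identifies $\widehat{\cc}$ with the determinantal variety of $3\times n$ matrices of rank $\le 2$ — and a generic-stabiliser computation ($\Stab_B(\bA)=\{\pm I\}$ for generic $\bA\in\ww^n$, $n\ge2$) gives $\dim\overline{\Gamma_{B,\ww^n}}=\dim B+\dim\ww^n=2n+2=\dim\widehat{\cc}$, so the two coincide. As $B\le G$ and $\ww^n\subseteq\vv^n$, $\Gamma_{B,\ww^n}\subseteq\Gamma_{G,\vv^n}$, whence $\widehat{\cc}=\overline{\Gamma_{B,\ww^n}}\subseteq\overline{\Gamma_{G,\vv^n}}$; applying the $G\times G$-action (which preserves $\overline{\Gamma_{G,\vv^n}}$) finishes this direction. (Alternatively one can reduce via Lemma~\ref{reducecols} to the case $n=2$ and exhibit explicit curves.)

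For the reverse inclusion, let $(\bA,\bA')=(g,g')\cdot(\bB,\bB')$ lie in the left-hand side with $(\bB,\bB')\in\cc$. Then $(\bB,\bB')=(g,g')^{-1}\cdot(\bA,\bA')\in\overline{\Gamma_{G,\vv^n}}$ since the graph closure is $G\times G$-stable, so it suffices to prove $\cc\cap\overline{\Gamma_{G,\vv^n}}\subseteq\widehat{\cc}$, i.e. that $(\bA,\bA')\in\cc\cap\overline{\Gamma_{G,\vv^n}}$ forces $\rk(m_{\bA,\bA'})\le 2$. Suppose not. The commuting $\GL_n$-action via $\star$ transforms $m_{\bA,\bA'}$ by column operations (hence preserves its rank), preserves $\cc$, and preserves $\overline{\Gamma_{G,\vv^n}}$ by Lemma~\ref{reducegamma} and its converse; so after a $\GL_n$-translate we may assume that the first three columns of $m_{\bA,\bA'}$ are the standard basis vectors of $\C^3$ and the rest are zero. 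Explicitly this means $A_1=A'_1=\diag(1,-1)$, $A_2=N$, $A'_2=0$, $A_3=0$, $A'_3=N$ with $N=\begin{pmatrix}0&1\\0&0\end{pmatrix}$, and $A_i=A'_i=0$ for $i\ge4$. It remains to show that this pair is \emph{not} in $\overline{\Gamma_{G,\vv^n}}$, contradicting the hypothesis and forcing $\rk(m_{\bA,\bA'})\le2$.

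To prove that claim, suppose there are curves $\mathbf B(t)=(B_1(t),\dots,B_n(t))\to\bA$ and $g(t)\in G$ with $g(t)\cdot\mathbf B(t)\to\bA'$ as $t\to0$. Since $B_1(t)\to\diag(1,-1)$ and $g(t)B_1(t)g(t)^{-1}\to\diag(1,-1)$ are limits of regular semisimple matrices, for $t$ near $0$ we may diagonalise continuously: $B_1(t)=h(t)D(t)h(t)^{-1}$ and $g(t)B_1(t)g(t)^{-1}=k(t)D(t)k(t)^{-1}$ with $h(t),k(t)\to I$ and $D(t)=\diag(\lambda(t),-\lambda(t))$, $\lambda(t)\to1$. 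Comparing, $k(t)^{-1}g(t)h(t)$ centralises $D(t)$ and so lies in $T$, giving $g(t)=k(t)\tau(t)h(t)^{-1}$ with $\tau(t)=\diag(s(t),s(t)^{-1})$. Since $k(t),h(t)\to I$, the condition $g(t)B_2(t)g(t)^{-1}\to0$ is equivalent to $\tau(t)\bigl(h(t)^{-1}B_2(t)h(t)\bigr)\tau(t)^{-1}\to0$; here $h(t)^{-1}B_2(t)h(t)\to N$, and conjugation by $\tau(t)$ multiplies the $(1,2)$-entry by $s(t)^2$, so the $(1,2)$-entry is $s(t)^2\cdot(1+o(1))$ and we are forced to have $s(t)^2\to0$. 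Similarly $g(t)B_3(t)g(t)^{-1}\to N$ gives $\tau(t)\bigl(h(t)^{-1}B_3(t)h(t)\bigr)\tau(t)^{-1}\to N$; writing $h(t)^{-1}B_3(t)h(t)=\begin{pmatrix}p&q\\r&-p\end{pmatrix}$ with $p,q,r\to0$, its $(1,2)$-entry $s(t)^2q(t)$ must tend to $1$. But $s(t)^2\to0$ and $q(t)\to0$ force $s(t)^2q(t)\to0$, a contradiction.

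The main obstacle is this last step: the $\supseteq$ direction reduces, after recognising $\widehat{\cc}$ as the $B$-graph closure, to a dimension count, whereas the $\subseteq$ direction rests on controlling the conjugating curve $g(t)$. The key device is that $\diag(1,-1)$ is regular semisimple, so that continuous diagonalisation of $B_1(t)$ and of $g(t)B_1(t)g(t)^{-1}$ pins $g(t)$ down to a one-parameter subgroup of $T$ modulo factors tending to $I$; one then has to verify that the scaling forced by the $A_2$-coordinate ($s(t)^2\to0$) is incompatible with that forced by the $A_3$-coordinate ($s(t)^2q(t)\to1$ with $q(t)\to0$). This incompatibility is exactly the failure of $\rk(m_{\bA,\bA'})\le2$, and the care needed is to ensure the surviving main terms in these limits (which converge to nonzero constants) cannot be cancelled by the error terms.
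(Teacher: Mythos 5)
Your proof is correct and takes a genuinely different route from the paper's in both directions.

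For the inclusion $(G\times G)\cdot\widehat{\cc}\subseteq\overline{\Gamma_{G,\vv^n}}$, the paper first uses the $\GL_n$-action (Lemmas~\ref{reducegamma} and~\ref{reducecols}) to reduce to the case $A_i=A'_i=0$ for $i\ge3$, and then splits into cases according to whether some $b_i\ne0$: in the first case a unipotent conjugation lands the pair in $\overline{\Gamma_{T,\ww^n}}$ and Proposition~\ref{tvar} applies, and in the second the pair lies in $\cc_0\subseteq\overline{\Gamma_{U,\vv^n}}$ by Proposition~\ref{uvar}. You instead identify $\widehat{\cc}$ outright with $\overline{\Gamma_{B,\ww^n}}$: the explicit conjugation formula $c_i\mapsto t^2c_i-2tub_i$ puts $\Gamma_{B,\ww^n}$ inside $\widehat{\cc}$, and since both sides are irreducible (the latter being a rank-variety) of dimension $2n+2$, they coincide. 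This is a cleaner and more self-contained argument: it bypasses the Dufresne--Kraft and Dufresne--Jeffries results the paper leans on, and it gives an intrinsic description of $\widehat{\cc}$ as a Borel graph closure which is pleasant in its own right. For the reverse inclusion, the paper works with a generic pair $(\bA,\bA')\in\cc\cap\overline{\Gamma_{G,\vv^n}}$ and computes $\Delta_{ijk}(\bA,\bA')$ along the approximating curve in two different ways (once treating the first row as $\lim\bb(t)$, once as $\lim\bb'(t)$, exploiting $\bb=\bb'$); matching the two expressions forces $\Delta_{ijk}=0$ without ever normalising the pair. You instead $\GL_n$-reduce a putative rank-$3$ counterexample to the explicit pair with $A_1=A'_1=\operatorname{diag}(1,-1)$, $A_2=N$, $A'_2=0$, $A_3=0$, $A'_3=N$, and then use continuous diagonalisation near the regular semisimple limit of $B_1(t)$ to write $g(t)=k(t)\tau(t)h(t)^{-1}$ with $\tau(t)\in T$ and $h(t),k(t)\to I$, deriving incompatible constraints $s(t)^2\to0$ (from $A_2$) and $s(t)^2q(t)\to1$ with $q(t)\to0$ (from $A_3$). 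Both arguments are valid; yours is more hands-on and localises the obstruction at a single point, whereas the paper's two-determinant trick is more uniform and avoids the analytic care needed to choose the diagonalising frames $h(t),k(t)$ consistently (holomorphic choice of eigenvalue branch and eigenvectors). The one place you should be explicit in a write-up is the continuity of the diagonalisation and the harmlessness of conjugating by $k(t)\to I$ when the intermediate matrix may have an unbounded $(2,1)$-entry: the correct order of operations is to conclude $M(t)=k(t)^{-1}\bigl(g(t)B_i(t)g(t)^{-1}\bigr)k(t)\to A'_i$ from the boundedness of the outer limit, rather than arguing entrywise from $M(t)$.
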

 
Note that for $n=2$ this implies that $(G \times G) \cdot \cc \subseteq \overline{\Gamma_{G,\vv^n}}$ and hence $\overline{(G \times G) \cdot \cc} \subseteq \overline{\Gamma_{G,\vv^n}}$, since $\widehat{\cc} =\cc$. Therefore it follows that $\mathcal{S}_{G,\vv^2}$ is irreducible.

\begin{proof} We begin by showing that $(G \times G)\cdot \widehat{\cc} \subseteq (G \times G) \cdot \cc \cap \overline{\Gamma_{G,\vv^n}}$. It is clear that $(G \times G) \cdot \widehat{\cc} \subseteq (G \times G) \cdot \cc$ and since $\overline{\Gamma_{G,\vv^n}}$ is fixed by the action of $G \times G$ it is enough to show that $\widehat{\cc} \subseteq \overline{\Gamma_{G,\vv^n}}$. 

Let $(\bA,\bA') \in \widehat{\cc}$. By Lemma \ref{reducegamma} it is enough to show that there exists $h \in \GL_n(\C)$ with $(h \star \bA, h\star \bA') \in \overline{\Gamma_{G,\vv^n}}$. Therefore by Lemma \ref{reducecols} we may assume $A_i = A'_i = 0$ for $i \geq 3.$ 

Suppose that there exists $i$ such that $b_i=b_i' \neq 0$, without loss of generality $i=1$. Set
\begin{equation}\label{chooseg} g:= \begin{pmatrix} 1 & \frac{-c_1}{2b_1} \\0 & 1 \end{pmatrix}, g':= \begin{pmatrix} 1 & \frac{-c'_1}{2b_1}\\ 0 & 1 \end{pmatrix}.\end{equation}
Then $$g \cdot A_1 = g' \cdot A_1' = \begin{pmatrix} b_1&0 \\0 & -b_1 \end{pmatrix},$$ and $(g \cdot A_2, g' \cdot A'_2)$ are upper triangular with equal diagonal entries. By Proposition \ref{tvar}, $$(g \cdot \bA, g' \cdot \bA') \in \overline{\Gamma_{T,\ww^n}} \subseteq \overline{\Gamma_{G,\vv^n}}.$$ Therefore $(\bA,\bA') \in \overline{\Gamma_{G,\vv^n}}$ as required.

Hence we may assume $b_i=b_i'=0$ for $i=1,2$. But in that case 
$$(\bA,\bA') \in \cc_0 \subseteq \overline{\Gamma_{U,\vv^n}} \subseteq  \overline{\Gamma_{G,\vv^n}}$$ where the first inclusion comes from Proposition \ref{uvar}.

To prove the converse it is enough to show that $\cc \cap \overline{\Gamma_{G,\vv^n}} \subseteq \widehat{\mathcal{C}}$. So, let $(\bA,\bA') \in \cc \cap \overline{\Gamma_{G,\vv^n}}$. Thus, there exist morphisms $g: \C^* \rightarrow G$, and $\bA: \C^* \rightarrow \vv^n$ such that
\[\lim_{t \rightarrow 0} \bA(t) = \bA, \lim_{t \rightarrow 0}(g(t) \cdot \bA(t)) = \bA',\] where we abuse notation by using $\bA$ for a function and its limit. Also let $\bA'(t):= (g(t) \cdot \bA(t))$ for all $t \in \C^*$. Note that although $a_i = a'_i = 0$ for all $i$ and $b_i=b'_i$ we do not have $a_i(t) = a'_i(t) = 0$ or $b_i(t) = b'_i(t)$ for all $t$ in general. 
Write \[g(t) = \begin{pmatrix} w(t) & x(t) \\ y(t) & z(t) \end{pmatrix}.\]
Let $1 \leq i<j<k \leq n$. Write $\ba(t),\bb(t),\bc(t)$ and $\bc(t)'$ for the row vectors $(a_i(t),a_j(t),a_k(t))$,$(b_i(t),b_j(t),b_k(t))$, $(c_i(t),c_j(t),c_k(t))$ and $(c'_i(t),c'_j(t),c'_k(t))$ respectively with analogous notation for their limits, and consider $$\Delta_{ijk}(\bA,\bA') = \begin{vmatrix} \bb\\ \bc \\ \bc'\end{vmatrix}.$$ On the one hand this is
\begin{eqnarray*} & \lim_{t \rightarrow 0} \begin{vmatrix} \bb(t)\\ \bc(t) \\ \bc'(t)\end{vmatrix}\\
&= \lim_{t \rightarrow 0} \begin{vmatrix} \bb(t) \\ \bc(t) \\ 2x(t)z(t)\bb(t) - x(t)^2\ba(t)+z(t)^2\bc(t) \end{vmatrix}\\
&= -\lim_{t \rightarrow 0} x(t)^2 \begin{vmatrix} \bb(t) \\ \bc(t) \\ \ba(t) \end{vmatrix}.
\end{eqnarray*}
using row operations and the facts that $\bb(t) \rightarrow \bb$, $\bc(t) \rightarrow \bc$, $\bc'(t) \rightarrow \bc'$ as $t \rightarrow 0$. But we also have $\bb'(t) \rightarrow \bb$, so that 
\begin{eqnarray*} \Delta_{ijk}(\bA,\bA')&= \lim_{t \rightarrow 0} \begin{vmatrix} \bb'(t)\\ \bc(t) \\ \bc'(t)\end{vmatrix}\\
&= \lim_{t \rightarrow 0} \begin{vmatrix} 2x(t)y(t)\bb(t) - w(t)x(t)\ba(t) + y(t)z(t)\bc(t) \\ \bc(t) \\ 2x(t)z(t)\bb(t) - x(t)^2\ba(t)+z(t)^2\bc(t) \end{vmatrix}\\
&= \lim_{t \rightarrow 0}  \begin{vmatrix} 2x(t)y(t)\bb(t) - w(t)x(t)\ba(t) \\ \bc(t) \\ 2z(t)\bb(t)x(t) - x(t)^2\ba(t) \end{vmatrix}.
\end{eqnarray*}

Noting that $w(t)z(t)-x(t)y(t)=1$ for all $t \in \C^*$, this is equal to 
\begin{eqnarray*} &\lim_{t \rightarrow 0} \begin{vmatrix} 2(w(t)z(t)-1)\bb(t) - w(t)x(t)\ba(t) \\ \bc(t) \\ 2z(t)\bb(t)x(t) - x(t)^2\ba(t) \end{vmatrix}\\
&= \lim_{t \rightarrow 0}x(t) \begin{vmatrix} 2(w(t)z(t)-1)\bb(t) - w(t)x(t)\ba(t) \\ \bc(t) \\ 2z(t)\bb(t) - x(t)\ba(t) \end{vmatrix}\\
&=  \lim_{t \rightarrow 0}x(t) \begin{vmatrix} w(t)(2z(t)\bb(t)-x(t)\ba(t)) - 2\bb(t) \\ \bc(t) \\ 2z(t)\bb(t) - x(t)\ba(t) \end{vmatrix}\\
&=  \lim_{t \rightarrow 0}x(t) \begin{vmatrix} - 2\bb(t) \\ \bc(t) \\ 2z(t)\bb(t) - x(t)\ba(t) \end{vmatrix}\\
&=  \lim_{t \rightarrow 0} 2x(t)^2 \begin{vmatrix} \bb(t) \\ \bc(t) \\ \ba(t) \end{vmatrix}\\
\end{eqnarray*}

This shows that $\Delta_{ijk}(\bA,\bA') = 0$ as required, and therefore $(\bA,\bA') \in \widehat{\cc}$.
\end{proof}

\begin{eg} Let $n \geq 3$ and define a pair of $n$-matrices:
\[A_1 = \begin{pmatrix} 1 & 0\\ 0 & -1 \end{pmatrix}, A_2 = \begin{pmatrix}1 & 1\\0& -1 \end{pmatrix}, A_3 = \begin{pmatrix} 1&1\\ 0 & -1 \end{pmatrix}\] and
\[A'_1 = \begin{pmatrix} 1 & 0\\ 0 & -1 \end{pmatrix}, A'_2 = \begin{pmatrix}1 & 0\\0& -1 \end{pmatrix}, A'_3 = \begin{pmatrix} 1&1\\ 0 & -1 \end{pmatrix}\]
with $A_i=A'_i = 0$ for all $i>3$. Then $\rk(m_{\bA,\bA'})=3$, and so proposition \ref{intersect} above shows that $(\bA,\bA') \in \overline{(G \times G) \cdot \cc } \setminus \overline{\Gamma_{G,\vv^n}}$. Thus, for $n \geq 3$, $\mathcal{S}_{G,\vv^n}$ has two irreducible components.
\end{eg}

We obtain the following immediate Corollary, from which Corollary \ref{maincor} also follows:

\begin{cor} Let $n \geq 3$ and suppose $S \subseteq \C[\vv^n]^G$ is a separating set. Then $|S| \geq 3n-2$.
\end{cor}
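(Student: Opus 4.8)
The plan is to read the bound off Proposition \ref{krullbound} using the component $\overline{(G \times G) \cdot \cc}$ of $\mathcal{S}_{G,\vv^n}$. First I would check that, for $n \geq 3$, this set genuinely is an irreducible component of $\mathcal{S}_{G,\vv^n}$. By \eqref{decompsgvagain} the separating variety is the union of the two closed sets $\overline{\Gamma_{G,\vv^n}}$ and $\overline{(G \times G) \cdot \cc}$, and each is irreducible: the first as the closure of the graph of the $G$-action, the second as the closure of the image of the irreducible variety $(G \times G) \times \cc$ under the action morphism. Neither is contained in the other: the Example immediately preceding exhibits a pair $(\bA,\bA')$ with $\rk(m_{\bA,\bA'}) = 3$, hence, by Proposition \ref{intersect}, lying in $\overline{(G \times G) \cdot \cc} \setminus \overline{\Gamma_{G,\vv^n}}$; and the reverse inclusion is impossible on dimension grounds, since $\dim \overline{\Gamma_{G,\vv^n}} = 3n+3 > 3n+2 = \dim \overline{(G \times G) \cdot \cc}$. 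Thus $\overline{(G \times G) \cdot \cc}$ is a (lower-dimensional, but genuine) irreducible component of $\mathcal{S}_{G,\vv^n}$.

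Next I would compute its codimension in $\vv^n \times \vv^n$. Since $\dim \vv = 3$ we have $\dim(\vv^n \times \vv^n) = 6n$, while $\dim \overline{(G \times G) \cdot \cc} = \dim \cc + 2\dim G - 2 \dim B = 3n + 6 - 4 = 3n+2$ as computed above. Hence $\codim_{\vv^n \times \vv^n}\big(\overline{(G \times G) \cdot \cc}\big) = 6n - (3n + 2) = 3n - 2$, and Proposition \ref{krullbound} immediately yields $|S| \geq 3n - 2$ for every separating set $S \subseteq \C[\vv^n]^G$, which is the assertion.

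There is no real obstacle here: everything required was established in Proposition \ref{intersect}, the Example, and the dimension counts preceding them, so the only step is to combine them with Krull's height theorem via Proposition \ref{krullbound}. For the parenthetical remark that Corollary \ref{maincor} follows, I would run the same argument for $\mathcal{S}_{G,\M^n}$: writing $\M^n = \vv^n \times \C^n$ with $G$ acting trivially on the $\C^n$ of traces, one has $\mathcal{S}_{G,\M^n} = \mathcal{S}_{G,\vv^n} \times \Delta$ with $\Delta \subseteq \C^n \times \C^n$ the diagonal, so $\overline{(G \times G) \cdot \cc} \times \Delta$ is a component of codimension $(3n-2) + n = 4n - 2$ in $\M^n \times \M^n$; since $4n - 2 > 4n - 3 = \dim \C[\M^n]^G$, no separating set for $\C[\M^n]^G$ can have the cardinality of a polynomial separating set.
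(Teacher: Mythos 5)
Your argument is correct and follows essentially the same route as the paper: identify $\overline{(G\times G)\cdot \cc}$ as a genuine irreducible component of $\mathcal{S}_{G,\vv^n}$ for $n\geq 3$ (using the example and Proposition \ref{intersect} to rule out containment in $\overline{\Gamma_{G,\vv^n}}$), compute its codimension $6n-(3n+2)=3n-2$ in $\vv^n\times\vv^n$, and invoke Proposition \ref{krullbound}. Your parenthetical derivation of Corollary \ref{maincor} via $\mathcal{S}_{G,\M^n}=\mathcal{S}_{G,\vv^n}\times\Delta$ is a clean way to transfer the bound back to $\C[\M^n]^G$ and agrees with the paper's bookkeeping (the bound $3n-2$ for $\C[\vv^n]^G$ becoming $4n-2 > 4n-3 = \dim\C[\M^n]^G$).
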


\begin{proof}
For $n\geq 3$ we have shown that $\mathcal{S}_{G,\vv^n}$ has two irreducible components. The component $\overline{\Gamma_{G,\vv^n}}$ has dimension $3n+3$,  i.e. codimension $3n-3$ in $\vv^n \times \vv^n$, while $(G \cdot \ww^n \times G \cdot \ww^n) \cap \mathcal{S}_{G,\vv^n} $ has dimension $3n+2$, i.e. codimension $3n-2$ in $\vv^n \times \vv^n$. The result now follows from Proposition \ref{krullbound}.
\end{proof}


To obtain the stronger bound in Proposition \ref{mainprop}, we need to compute the dimension of $\overline{(G \times G) \cdot \cc} \cap \overline{\Gamma_{G,\vv^n}}$. We are not able to find a complete description of $\overline{(G \times G) \cdot \cc}$; but for our purposes the following is enough.

\begin{Lemma}\label{closureggc} \[\overline{(G \times G) \cdot \cc} \subseteq (G \times G) \cdot \cc \cup (G \times G)\cdot\widehat{\cc'}.\]
\end{Lemma}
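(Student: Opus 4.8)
The goal is to show that any point of the closure $\overline{(G\times G)\cdot\cc}$ which is not already in $(G\times G)\cdot\cc$ must lie in $(G\times G)\cdot\widehat{\cc'}$. The plan is to take a convergent family: a point $(\bA,\bA')$ of the closure is a limit $\lim_{t\to 0}(g(t),g'(t))\cdot(\bB(t),\bB'(t))$ with $(\bB(t),\bB'(t))\in\cc$ for $t\neq 0$, i.e. $\bB(t),\bB'(t)\in\ww^n$ and the $b$-coordinates of $\bB(t)$ and $\bB'(t)$ agree. Using Lemma \ref{reducesepvar} I may first apply a suitable $h\in\GL_n(\C)$ (depending on $t$, or better, chosen generically and then passed to the limit via Lemma \ref{reducecols}) to reduce to the case where all but the first two matrices vanish identically, so the whole problem becomes $2$-dimensional in the $\GL_n$-direction; this is where Proposition \ref{intersect} and the $n=2$ picture do the real work.

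Concretely, I would argue as follows. Since $\overline{(G\times G)\cdot\cc}\subseteq\mathcal{S}_{G,\vv^n}$, by Lemma \ref{decompgwgwsgv} together with Lemma \ref{losecminus}, $(\bA,\bA')\in(G\times G)\cdot\cc\cup(G\times G)\cdot\cc'$. If $(\bA,\bA')\in(G\times G)\cdot\cc$ we are done, so assume $(\bA,\bA')\in(G\times G)\cdot\cc'\setminus(G\times G)\cdot\cc$. Acting by $G\times G$ I may assume $(\bA,\bA')\in\cc'$ itself; I must then show $\rk(m_{\bA,\bA'})\leq 2$, i.e. $(\bA,\bA')\in\widehat{\cc'}$. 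Here I would pick a one-parameter family $(\bB(t),\bB'(t))\in\cc$ and $g(t),g'(t)\in G$ with $\bA=\lim_{t\to 0}g(t)\cdot\bB(t)$ and $\bA'=\lim_{t\to 0}g'(t)\cdot\bB'(t)$, and run exactly the kind of determinant-and-row-operation computation used in the proof of Proposition \ref{intersect}: expand $\Delta_{ijk}(\bA,\bA')$ using that the $c$-rows of $g(t)\cdot\bB(t)$ and $g'(t)\cdot\bB'(t)$ are quadratic combinations of the rows $\ba(t),\bb(t),\ba'(t),\bb'(t)$ of $\bB(t),\bB'(t)$, use that $\bB(t),\bB'(t)$ are upper triangular (so their $a$-rows vanish) and that their $b$-rows coincide, and extract enough cancellation to conclude $\Delta_{ijk}(\bA,\bA')=0$ for all $i<j<k$.

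The main obstacle I anticipate is bookkeeping the transition from $\cc$ to $\cc'$: the defining condition of $\cc'$ is $b_i=-b_i'$ rather than $b_i=b_i'$, which corresponds to conjugating one side by the Weyl element $\begin{pmatrix}0&1\\-1&0\end{pmatrix}$ and swapping the roles of the $a$- and $c$-coordinates. So I would first reduce the $\cc'$ case to the $\cc$ case by replacing $\bA'$ with $w\cdot\bA'$ where $w=\begin{pmatrix}0&1\\-1&0\end{pmatrix}$: then $(\bA,w\cdot\bA')\in\cc_0$-type data? — not quite, but at least the diagonal entries now agree, and the matrix $m_{\bA,w\cdot\bA'}$ is obtained from $m_{\bA,\bA'}$ by swapping the last row with (a sign times) the row of $a'$-coordinates, which are all zero; so the rank condition is not simply preserved and must be tracked carefully. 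An alternative, cleaner route is to observe directly that $\widehat{\cc'}=w\cdot\widehat{\cc}$ under this action on the second factor and to deduce the statement from Proposition \ref{intersect} by symmetry, together with the already-established inclusion $\overline{(G\times G)\cdot\cc}\subseteq\mathcal{S}_{G,\vv^n}$ and the decomposition of Lemma \ref{decompgwgwsgv}. I would pursue this symmetry argument first, falling back on the explicit determinant computation only if the rank bookkeeping genuinely requires it.
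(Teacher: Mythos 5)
Your fallback is correct and is precisely the paper's proof: you first reduce, via Lemma~\ref{decompgwgwsgv} and $G\times G$-equivariance, to showing $\cc'\cap\overline{(G\times G)\cdot\cc}\subseteq\widehat{\cc'}$; then you write $(\bA,\bA')$ as the limit of $\bigl(g(t)\cdot\bZ(t),\,g'(t)\cdot\bZ'(t)\bigr)$ with $(\bZ(t),\bZ'(t))\in\cc$, express the rows $\bb,\bc,\bc'$ of $m_{\bA,\bA'}$ as limits of quadratic combinations of the shared $b$-row $\bbeta(t)$ and the two $c$-rows $\bgamma(t),\bgamma'(t)$, and kill $\Delta_{ijk}$ by row operations and the relations $\det g(t)=\det g'(t)=1$.

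Your preferred ``cleaner route,'' however, has a genuine gap. The claimed identity $\widehat{\cc'}=(1\times w)\cdot\widehat{\cc}$ is false: conjugating an upper-triangular trace-zero matrix by $w=\begin{pmatrix}0&1\\-1&0\end{pmatrix}$ produces a \emph{lower}-triangular one, so $(1\times w)\cdot\widehat{\cc}\not\subseteq\ww^n\times\ww^n$ and cannot coincide with $\widehat{\cc'}$. There is a correct $G\times G$-equivariant involution swapping $\widehat{\cc}$ and $\widehat{\cc'}$, namely $(\bA,\bA')\mapsto(\bA,-\bA')$, but it does not preserve $\mathcal{S}_{G,\vv^n}$ (since $t_{ijk}$ is odd under $\bA\mapsto-\bA$) nor $\overline{\Gamma_{G,\vv^n}}$, so it transports nothing useful. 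More fundamentally, Proposition~\ref{intersect} concerns $(G\times G)\cdot\cc\cap\overline{\Gamma_{G,\vv^n}}$, whose limit points arise from a \emph{single} one-parameter family $g(t)$ acting on a single curve $\bA(t)$, whereas Lemma~\ref{closureggc} concerns the boundary of $\overline{(G\times G)\cdot\cc}$, whose limit points arise from \emph{two independent} families $g(t),g'(t)$ applied to a curve in $\cc$. No ambient symmetry identifies these two situations, which is exactly why the paper carries out a second, parallel but distinct, determinant computation. You should skip the symmetry attempt and go directly to your fallback.
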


\begin{proof} Suppose $(\bA,\bA') \in \overline{(G \times G) \cdot \cc} \setminus (G \times G) \cdot \cc$. As $(G \times G) \cdot \cc \subseteq (G \cdot \ww^n \times G \cdot \ww^n) \cap \mathcal{S}_{G,\vv^n}$ and the latter is closed, we have
\[(\bA,\bA') \in \overline{(G \times G) \cdot \cc} \subseteq  (G \cdot \ww^n \times G \cdot \ww^n) \cap \mathcal{S}_{G,\vv^n} = (G \times G) \cdot \cc \cup (G \times G) \cdot \cc',\] with the final equality coming from Lemma \ref{decompgwgwsgv}. Therefore $(\bA,\bA') \in (G \times G) \cdot \cc'$. Since $(G \times G) \cdot \cc$, $\overline{(G \times G) \cdot \cc}$ and $(G \times G) \cdot \cc'$ are all fixed by the action of $(G \times G)$, it is enough to show that
\[\cc' \cap \overline{(G \times G) \cdot \cc} \subseteq \widehat{\cc'}.\]

So, suppose $(\bA,\bA') \in \overline{(G \times G) \cdot \cc}  \cap \cc'$; we aim to show that $\rk(m_{\bA,\bA'}) \leq 2$.

 Let $1 \leq i<j<k \leq n$, and write $\bb$, $\bc$ and $\bc'$ for the row vectors $(b_i,b_j,b_k)$, $(c_i,c_j,c_k)$ and $(c'_i,c'_j,c'_k)$ respectively. Since $(\bA,\bA') \in \overline{(G \times G) \cdot \cc}$, there exist morphisms $g, g': \C^* \rightarrow G$, and $\bZ, \bZ': \C^* \rightarrow \cc$ such that
\[\lim_{t \rightarrow 0} g(t) \cdot \bZ(t) = \bA, \lim_{t \rightarrow 0} g'(t) \cdot \bZ'(t) = \bA'.\]
Write \[g(t) = \begin{pmatrix} w(t) & x(t) \\ y(t) & z(t) \end{pmatrix}, g'(t) = \begin{pmatrix} w'(t) & x'(t) \\ y'(t) & z'(t) \end{pmatrix},\]
and
\[Z_l(t) = \begin{pmatrix} \beta_l(t) & \gamma_l(t) \\ 0 & -\beta_l(t) \end{pmatrix}, Z'_l(t) = \begin{pmatrix} \beta_l(t) & \gamma_l'(t) \\ 0 & -\beta_l(t) \end{pmatrix}\] for $l=1, \ldots, n$, where $\bZ = (Z_1,Z_2, \ldots, Z_n)$ and $\bZ'$ is similar.

Now writing $\bbeta$, $\bgamma$ and $\bgamma'$ for the row vectors $(\beta_i,\beta_j,\beta_k)$, $(\gamma_i,\gamma_j,\gamma_k)$ and $(\gamma'_i,\gamma'_j,\gamma'_k)$ respectively and evaluating $g(t) \cdot \bZ(t)$ shows that 
\begin{equation}\label{bb} \bb = \lim_{t \rightarrow 0} (w(t)z(t)\bbeta(t) - w(t)y(t)\bgamma(t) + x(t)y(t)\bbeta(t)) \end{equation}
and
\begin{equation}\label{bc} \bc = \lim_{t \rightarrow 0} (-2x(t)w(t)\bbeta(t)+w(t)^2\bgamma(t)). \end{equation}
Meanwhile, evaluating $g'(t) \cdot \bZ'(t)$ shows that 
\begin{equation}\label{bb'} \bb = \lim_{t \rightarrow 0}(  -w'(t)z'(t)\bbeta(t) + w'(t)y'(t)\bgamma'(t) - x(t)y(t)\bbeta(t)) \end{equation}
and
\begin{equation}\label{bc'} \bc' = \lim_{t \rightarrow 0} (-2x'(t)w'(t)\bbeta(t)+w'(t)^2\bgamma'(t)). \end{equation}

Now using \eqref{bb}, \eqref{bc} and \eqref{bc'} we have 
\begin{eqnarray*} \Delta_{ijk}(\bA,\bA') &=& \lim_{t \rightarrow 0}\begin{vmatrix} w(t) z(t) \bbeta(t)-w(t)y(t)\bgamma(t)+x(t)y(t)\bbeta(t) \\ -2w(t)x(t)\bbeta(t) + w(t)^2 \bgamma(t)\\ -2w'(t)x'(t)\bbeta(t)+w'(t)^2\bgamma(t) \end{vmatrix}\\
&=& \lim_{t \rightarrow 0} w(t)w'(t)\begin{vmatrix} \bbeta(t)+2x(t)y(t)\bbeta(t)-w(t)y(t)\bgamma(t)\\  -2x(t)\bbeta(t) + w(t) \bgamma(t)\\ -2x'(t)\bbeta(t)+w'(t)\bgamma(t) \end{vmatrix}
\end{eqnarray*}
where we use the fact that $w(t)z(t)-x(t)y(t)=1$ for all $t \in \C^*$ in the first row. Now using row operations to simplify the determinant we get
\begin{eqnarray*} \Delta_{ijk}(\bA,\bA') &=&  \lim_{t \rightarrow 0} w(t)w'(t)\begin{vmatrix} \bbeta(t)\\  -2x(t)\bbeta(t) + w(t) \bgamma(t)\\ -2x'(t)\bbeta(t)+w'(t)\bgamma'(t) \end{vmatrix}\\
&=& \lim_{t \rightarrow 0} w(t)^2w'(t)^2\begin{vmatrix} \bbeta(t)\\  \bgamma(t)\\ \bgamma'(t) \end{vmatrix}.
\end{eqnarray*}

On the other hand, using  \eqref{bc}, \eqref{bb'} and \eqref{bc'} gives us 
\begin{eqnarray*} \Delta_{ijk}(\bA,\bA') &=& \lim_{t \rightarrow 0}\begin{vmatrix} -w'(t) z'(t) \bbeta(t)+w'(t)y'(t)\bgamma'(t)-x'(t)y'(t)\bbeta(t) \\ -2w(t)x(t)\bbeta(t) + w(t)^2 \bgamma(t)\\ -2w'(t)x'(t)\bbeta(t)+w'(t)^2\bgamma(t) \end{vmatrix}\\
&=& \lim_{t \rightarrow 0} w(t)w'(t)\begin{vmatrix} -\bbeta(t)-2x'(t)y'(t)\bbeta(t)+w'(t)y'(t)\bgamma'(t)\\  -2x(t)\bbeta(t) + w(t) \bgamma(t)\\ -2x'(t)\bbeta(t)+w'(t)\bgamma(t) \end{vmatrix}
\end{eqnarray*}
where we use the fact that $w'(t)z'(t)-x'(t)y'(t)=1$ for all $t \in \C^*$ in the first row. Now using row operations to simplify the determinant we get
\begin{eqnarray*} \Delta_{ijk}(\bA,\bA') &=&  \lim_{t \rightarrow 0} w(t)w'(t)\begin{vmatrix} -\bbeta(t)\\  -2x(t)\bbeta(t) + w(t) \bgamma(t)\\ -2x'(t)\bbeta(t)+w'(t)\bgamma'(t) \end{vmatrix}\\
&=& \lim_{t \rightarrow 0} w(t)^2w'(t)^2\begin{vmatrix} -\bbeta(t)\\  \bgamma(t)\\ \bgamma'(t) \end{vmatrix}.
\end{eqnarray*}
This shows that $\Delta_{ijk}(\bA,\bA') = 0$. Since this applies to all $1 \leq i<j<k \leq n$ we get $\rk(m_{\bA,\bA'}) \leq 2$ as required, i.e. $(\bA,\bA') \in \widehat{\cc'}$.
\end{proof}

\begin{cor}\label{dimintersect} We have $$\overline{(G \times G)\widehat{\cc}} \subseteq \overline{(G \times G) \cc} \cap \overline{\Gamma_{G,\vv^n}} \subseteq \overline{(G \times G)\widehat{\cc}} \cup \overline{(G \times G)\widehat{\cc'}}.$$
\end{cor}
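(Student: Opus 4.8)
The plan is to establish the two inclusions separately, combining the previous lemmas. For the first inclusion, I would argue as follows. By Proposition \ref{intersect} we have $(G\times G)\cdot\widehat{\cc}\subseteq (G\times G)\cdot\cc\cap\overline{\Gamma_{G,\vv^n}}$. Taking Zariski closures, and using that $\overline{\Gamma_{G,\vv^n}}$ is already closed while $\overline{(G\times G)\cdot\cc}\supseteq (G\times G)\cdot\cc$, we immediately get $\overline{(G\times G)\cdot\widehat{\cc}}\subseteq \overline{(G\times G)\cdot\cc}\cap\overline{\Gamma_{G,\vv^n}}$, which is the left-hand inclusion. (Here I should note the harmless notational convention that $(G\times G)\cdot\cc$ and $(G\times G)\cc$ denote the same set.)

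For the right-hand inclusion, take $(\bA,\bA')\in\overline{(G\times G)\cdot\cc}\cap\overline{\Gamma_{G,\vv^n}}$. By Lemma \ref{closureggc}, $\overline{(G\times G)\cdot\cc}\subseteq (G\times G)\cdot\cc\cup(G\times G)\cdot\widehat{\cc'}$, so the point lies in one of these two pieces. If $(\bA,\bA')\in(G\times G)\cdot\cc$, then since it also lies in $\overline{\Gamma_{G,\vv^n}}$, Proposition \ref{intersect} gives $(\bA,\bA')\in(G\times G)\cdot\widehat{\cc}\subseteq\overline{(G\times G)\cdot\widehat{\cc}}$. If instead $(\bA,\bA')\in(G\times G)\cdot\widehat{\cc'}\subseteq\overline{(G\times G)\cdot\widehat{\cc'}}$, we are done directly. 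In either case the point lies in $\overline{(G\times G)\cdot\widehat{\cc}}\cup\overline{(G\times G)\cdot\widehat{\cc'}}$, giving the right-hand inclusion.

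The argument is essentially a bookkeeping exercise assembling Proposition \ref{intersect} and Lemma \ref{closureggc}, so I do not anticipate a genuine obstacle; the only point requiring a little care is making sure the closure operations interact correctly with the already-closed set $\overline{\Gamma_{G,\vv^n}}$ and with the $(G\times G)$-stable sets, but this is routine since closure preserves inclusions and the intersection of a closed set with the closure of a set contains the closure of the intersection of that set with the closed set only in one direction — which is exactly the direction we need, and the reverse direction is supplied by Lemma \ref{closureggc} together with Proposition \ref{intersect}.
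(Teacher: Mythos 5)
Your argument is correct and follows essentially the same route as the paper: the left inclusion comes from Proposition \ref{intersect} plus the observation that the middle term is closed, and the right inclusion comes from Lemma \ref{closureggc} combined again with Proposition \ref{intersect}. You streamline slightly by not invoking Lemma \ref{losecminus} — the paper uses it to rewrite $\overline{\Gamma_{G,\vv^n}}\cap(G\times G)\widehat{\cc'}$ as $(G\times G)\widehat{\cc'}$, whereas you only need the one-sided containment into the union, which is a harmless simplification.
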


\begin{proof} The first inclusion follows from Proposition \ref{intersect} and the fact that the middle term is closed. For the second we have 
\[ \overline{(G \times G) \cc} \cap \overline{\Gamma_{G,\vv^n}} \subseteq ((G \times G) \cdot \cc \cup (G \times G)\widehat{\cc'}) \cap \overline{\Gamma_{G,\vv^n}} \] by Lemma \ref{closureggc},
\[= (G \times G) \widehat{\cc} \cup (\overline{\Gamma_{G,\vv^n}} \cap (G \times G)\widehat{\cc'})  \]
by Proposition \ref{intersect},
\[= (G \times G) \widehat{\cc} \cup (G \times G)\widehat{\cc'} \] since $$(G \times G)\widehat{\cc'} \subseteq (G \times G) \cdot \cc' \subseteq \overline{\Gamma_{G,\vv^n}}$$ by Lemma \ref{losecminus}.
\end{proof}

Notice that $\widehat{\cc}$ and $\widehat{\cc'}$ are both fixed by $B \times B$, since $\Delta_{ijk}$ is $B \times B$ semi-invariant for all $i,j,k$ on both $\cc$ and $\cc'$, and both $\cc$ and $\cc'$ are $B \times B$-modules. Therefore we have
\[\dim(\overline{(G \times G) \widehat{\cc}}) = \dim(\widehat{\cc}) + 2 \dim(G) - 2 \dim(B) =  \dim(\widehat{\cc}) + 2.\] 
Now it's easy to see that $\widehat{\cc}$ is isomorphic to the variety of $3 \times n$ matrices with rank at most 2, which is well known to have dimension $2n+2$. Therefore we have $\dim(\overline{(G \times G) \widehat{\cc}}) = 2n+4.$ Exactly the same argument also shows $\dim(\overline{(G \times G) \widehat{\cc'}}) = 2n+4.$ Now Corollary \ref{dimintersect} above shows that $$\dim(\overline{(G \times G) \cdot \cc} \cap \overline{\Gamma_{G,\vv^n}}) = 2n+4.$$

Consequently, since $\dim(\mathcal{S}_{G,\vv^n}) = 6n - (3n-3) = 3n+3$, we get that $\mathcal{S}_{G,\vv^n}$ is connected in codimension $n-1$ but not in codimension $n-2$. We also note that $\mathcal{S}_{G,\vv^n}$ contains just two components and these have non-trivial intersection. By Proposition \ref{grothbound}, for any separating set $S \subseteq \C[\vv^n]^G$ we have
\[|S| \geq \dim(\C[\vv^n])^G + n - 2 = 3n-3+n - 2 = 4n-5.\]
This completes the proof of Proposition \ref{mainprop} and Theorem \ref{mainthm} follows immediately.

\section{Smaller separating sets}
In this section we will show that for $n \geq 5$ there exist smaller separating sets for $\C[\vv^n]^G$ than $S_n$. The aim is prove Proposition \ref{mainprop2}; Theorem \ref{mainthm2} will follow immediately.

We begin by noting that, for $n \geq 4$, $\C[\vv^n]^G$ is not a polynomial ring; for $n \geq 2$ its dimension is $4n-3$ and the minimum number of algebra generators is $\frac16(n^3 + 5n)$. Relations between the generators of $\C[\M^n]^G$ were completely described by Drensky \cite{Drensky2x2}, but we do not need a complete description here. The following is a translation of one kind of relation from \cite{Drensky2x2} to the trace-zero setting:

\begin{equation}\label{tracerel}
t_{ijk}t_{pqr} = -\frac18 \begin{vmatrix} t_{ip} & t_{iq} & t_{ir}\\
							t_{jp} & t_{jq} & t_{jr}\\
							t_{kp} & t_{kq} & t_{kr} \end{vmatrix}
\end{equation} 
for $1 \leq i<j<k \leq n$, $1 \leq p<q<r \leq n$, $n \geq 4$. This can also be shown directly using \eqref{tripletraceasdet} and standard properties of determinants.

For the rest of this section assume $n \geq 3$. Now suppose that $\bA,\bA'$ are trace-zero $n$-matrices and that $\Tr(A_iA_j) = \Tr(A'_iA_j')$ for all $1 \leq i \leq j \leq n$. The relation \eqref{tracerel} above with $i=p,j=q,r=k$ tells us immediately that $\Tr(A_iA_jA_k) = \pm \Tr(A'_iA'_jA'_k)$ for all $1 \leq i<j<k \leq n$. Applying the same relation with arbitrary $i,j,k,p,q,r$ shows that either $\Tr(A_iA_jA_k) = \Tr(A'_iA'_jA'_k)$ for all $1 \leq i<j<k \leq n$ or $\Tr(A_iA_jA_k) = -\Tr(A'_iA_j'A_k')$ for all $1 \leq i<j<k \leq n$. 

Further, suppose $f \in \bigoplus_{ 1 \leq i<j<k \leq n} \C \cdot t_{ijk}$. Then $f(\bA) = \pm f(\bA')$. If $f(\bA) = f(\bA') \neq 0$ then for $1 \leq  i<j<k \leq n$ we have $$t_{ijk}(\bA) = \frac{t_{ijk}f(\bA)}{f(\bA)} =  \frac{t_{ijk}f(\bA')}{f(\bA')} = t_{ijk}(\bA')$$ since $t_{ijk} f \in \C[t_{ij}: 1 \leq i \leq j \leq n]$.
 
From the above discussion we deduce: 
\begin{Lemma}\label{rankdetecting} Let $S' = \{t_{ij}: 1 \leq i \leq j \leq n\} \cup F$ where $F \subset \bigoplus_{ 1 \leq i<j<k \leq n} \C \cdot t_{ijk}$. Then $S'$ is a separating set for $\C[\vv^n]^G$ if and only if for all $\bA \in \vv^n$ we have
\[f(\bA) = 0 \ \text{for all} \ f \in F \Rightarrow t_{ijk}(\bA) = 0 \ \text{for all} \ 1 \leq i < j < k \leq n.\] 
\end{Lemma}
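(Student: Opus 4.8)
The plan is to unwind Lemma \ref{rankdetecting} directly from Proposition \ref{radversion} together with the preparatory discussion immediately preceding the statement. Since $S' = \{t_{ij}\} \cup F$ always contains all the $t_{ij}$, a pair $(\bA,\bA')$ satisfies $s(\bA) = s(\bA')$ for all $s \in S'$ precisely when $t_{ij}(\bA) = t_{ij}(\bA')$ for all $i \leq j$ \emph{and} $f(\bA) = f(\bA')$ for all $f \in F$. The point is to show this system of equations cuts out exactly $\mathcal{S}_{G,\vv^n}$, i.e. that it is equivalent to $s(\bA) = s(\bA')$ for all $s$ in the full generating set $E_n = \{t_{ij}\} \cup \{t_{ijk}\}$ of $\C[\vv^n]^G$ (Proposition \ref{gens}). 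One direction is trivial: if $(\bA,\bA')$ is not separated by $\C[\vv^n]^G$ then in particular no $s \in S'$ separates it. So the content is the converse.

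First I would fix a pair $(\bA,\bA')$ with $t_{ij}(\bA) = t_{ij}(\bA')$ for all $i \leq j$ and $f(\bA) = f(\bA')$ for all $f \in F$, and aim to conclude $t_{ijk}(\bA) = t_{ijk}(\bA')$ for all $i<j<k$, which by Proposition \ref{gens} forces $(\bA,\bA') \in \mathcal{S}_{G,\vv^n}$. The discussion preceding the Lemma, via the relation \eqref{tracerel}, already establishes that equality of all $t_{ij}$ on $\bA$ and $\bA'$ implies the dichotomy: either $t_{ijk}(\bA) = t_{ijk}(\bA')$ for all triples, or $t_{ijk}(\bA) = -t_{ijk}(\bA')$ for all triples. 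So it remains only to rule out the second (``minus'') alternative whenever $(\bA,\bA')$ is additionally killed by the condition coming from $F$ — more precisely, to show that in the minus case the hypothesis $f(\bA)=f(\bA')$ for all $f \in F$ forces all $t_{ijk}(\bA)=0$ (equivalently all $t_{ijk}(\bA')=0$), so that the minus and plus alternatives coincide. In the minus case, for any $f \in F \subseteq \bigoplus \C\cdot t_{ijk}$ we have $f(\bA') = -f(\bA)$, while the hypothesis gives $f(\bA')=f(\bA)$; hence $f(\bA)=0$ for all $f\in F$. By the asserted implication in the statement, this yields $t_{ijk}(\bA)=0$ for all triples, so the ``minus'' equalities $t_{ijk}(\bA) = -t_{ijk}(\bA')$ become $0 = -t_{ijk}(\bA')$, i.e. $t_{ijk}(\bA') = 0 = t_{ijk}(\bA)$; thus $(\bA,\bA')$ is not separated by $E_n$, hence lies in $\mathcal{S}_{G,\vv^n}$.

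For the reverse implication of the iff, suppose $S'$ is a separating set but the displayed implication fails, so there is $\bA$ with $f(\bA)=0$ for all $f\in F$ but $t_{ijk}(\bA) \neq 0$ for some triple. I would produce a witness pair violating the separating property: take $\bA' := \diag(-1,1)\star\bA$ in the sense of negating appropriately, or more cleanly, use that $\vv$ carries an involution negating all $t_{ijk}$ while fixing all $t_{ij}$ — concretely one can realise this by acting with a suitable element of $\GL_n(\C)$ of determinant $-1$ (a single column sign-change), since by \eqref{tripletraceasdet} $t_{ijk}$ is computed as a $3\times 3$ determinant in the coordinates $a,b,c$, so it is $\GL_n$-alternating of degree one in the sign, whereas $t_{ij}$ is symmetric. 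With such $\bA'$: all $t_{ij}$ agree on $\bA,\bA'$; every $f \in F$ satisfies $f(\bA')=-f(\bA)=0=f(\bA)$; hence $s(\bA)=s(\bA')$ for all $s \in S'$. But $t_{ijk}(\bA') = -t_{ijk}(\bA) \neq t_{ijk}(\bA)$ for the chosen triple, so $(\bA,\bA') \notin \mathcal{S}_{G,\vv^n}$, contradicting Proposition \ref{radversion}.

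\textbf{Main obstacle.} The only genuinely delicate point is constructing the sign-changing operation on $\vv^n$ that negates all triple traces while fixing all double traces; everything else is bookkeeping with the relation \eqref{tracerel}. I expect the cleanest route is to observe directly from \eqref{tripletraceasdet} that replacing a single matrix $A_\ell$ by $-A_\ell$ (i.e. acting by $\diag(1,\dots,1,-1,1,\dots,1)\in\GL_n$ via $\star$) multiplies every $t_{ijk}$ involving index $\ell$ by $-1$ and fixes those not involving $\ell$, which is \emph{not} quite a global sign flip. To get a uniform global sign one instead observes that negating \emph{all} of $\bA$, i.e. $\bA \mapsto -\bA = (-1)\star\bA$ with $-1 = \diag(-1,\dots,-1)$, fixes every $t_{ij}$ (even degree) and negates every $t_{ijk}$ (odd degree); this is the involution to use, and it is manifestly realised by an element of $\GL_n(\C)$, hence preserves $\mathcal{S}_{G,\vv^n}$ by Lemma \ref{reducesepvar} — actually we do not even need that, since we only use it to build the witness pair. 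I would present this negation argument in a sentence and keep the rest as above.
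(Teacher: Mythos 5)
Your reconstruction is correct and follows essentially the same route the paper intends: establish the global sign dichotomy for the triple traces from the relation \eqref{tracerel}, then use the displayed implication to show the ``minus'' alternative forces all $t_{ijk}$ to vanish, and for the converse take the witness pair $(\bA,-\bA)$ (scalar action of $-I \in \GL_n$, even in the $t_{ij}$, odd in the $t_{ijk}$). The one small difference is that the paper's preparatory discussion also includes the quotient trick (if $f(\bA)=f(\bA')\neq 0$ for some $f\in F$ then $t_{ijk}(\bA)=t_{ijk}f(\bA)/f(\bA)=t_{ijk}(\bA')$, since $t_{ijk}f$ lies in $\C[t_{ij}]$), which the paper uses to split on ``some $f$ nonzero'' versus ``all $f$ zero'' rather than on the plus/minus sign as you do; your case split makes the quotient trick unnecessary and is arguably tidier, but the ingredients and conclusion are the same. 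Your brief false start with $\diag(-1,1)\star\bA$ is harmlessly self-corrected to the global negation $\bA\mapsto-\bA$, which is what is actually needed.
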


Given $\bA \in \vv^n$ we may form a $3 \times n$ matrix $M_{\bA}$ as follows:
\[M_{\bA} = \begin{pmatrix} c_1 & c_2 & \cdots & c_n \\ b_1 & b_2 & \cdots & b_n \\ a_1 & a_2 & \cdots & a_n  \end{pmatrix}\]
The values of $t_{ijk}(\bA)$ are the maximal minors of this matrix. Let $Y$ be a generic $3 \times n$ matrix and let $I$ be the ideal of $\C[Y]$ generated by its $3 \times 3$ minors. Choosing a set $F$ as in Lemma \ref{rankdetecting} above is equivalent to choosing a set of linear combinations of $3 \times 3$ minors generating an ideal of $\C[Y]$ whose radical is the same as the radical of $I$ (in fact, $I$ is a radical ideal). \cite[Lemma~5.9]{BrunsVetter} describes how to do this, and Proposition 5.20, Corollary 5.21 and Proposition 5.22 [loc. cit.] show that $3n-8$ linear combinations suffice. Concretely, let $[ijk]$ denote the minor corresponding to columns $i,j,k$ in $Y$; choose $e_1, \ldots, e_r$, $r = 3n-8$ where
\[e_l = \sum_{1 \leq i<j<k \leq n} \lambda_{ijk}[ijk]\] for $l=1,\ldots, r$. Set
\[f_l = \sum_{1 \leq i<j<k \leq n} \lambda_{ijk} t_{ijk}\] 
 for $l=1,\ldots, r$. Then 
\[S' = \{t_{ij}: 1 \leq i \leq j \leq n\} \cup \{f_l: l= 1 \ldots, r\}\] is a separating set for $\C[\vv^n]^G$. The cardinality of this set is
\[n+ \dbinom n 2+3n-8 = \frac12(n^2 +7n-16).\] This completes the proof of Proposition \ref{mainprop2}; Theorem \ref{mainthm2} now follows immediately from the remarks at the beginning of Section 3.

\section{Matrix Semi-invariants}

In this section we consider the action of a different but related group on $\M^n_2$: let $H:= \SL_2 \times \SL_2$. This acts on an $n$-matrix $\bA$ according to the formula
\[(h_1,h_2) \cdot \bA = (h_1 A_1  h_2^{-1}, h_1 A_2 h_2^{-1}, \ldots, h_1 A_n h_2^{-1}).\]

Generating sets for the algebras of invariants $\C[\M^n_2]^H$ are known, see \cite{DomokosPoincare}. More recently, Domokos \cite{DomokosSemi} showed that the following set $S_n$ of invariants are a separating set for $\C[\M^n_2]^H$  which is minimal by inclusion (we retain the notation of Section 1):

\begin{itemize}
\item $\det(X_i): 1 \leq i \leq n$;
\item $\langle X_i | X_j \rangle:= \Tr(X_i)\Tr(X_j) - \Tr(X_iX_j): 1 \leq i<j \leq n$;
\item $\xi(X_iX_jX_kX_l): 1 \leq i<j<k<l \leq n.$
\end{itemize}

Here $\xi(X_iX_jX_kX_l)$ is the coefficient of $a_ia_ja_ka_l$ in the determinant
\[\begin{vmatrix} a_iX_i & a_jX_j\\ a_kX_k & a_lX_l \end{vmatrix} \in \C[\M_2^n][a_i,a_j,a_k,a_l].\]

The size of this separating set is $n+ \begin{pmatrix} n\\ 2 \end{pmatrix} + \begin{pmatrix} n\\4 \end{pmatrix}$. We note once again that the fact that this separating set is minimal by inclusion does not mean that it has minimal cardinality.

In this section we prove the following:
\begin{thm}\label{semiinv}
Let $S \subseteq \C[\M^n_2]^H$ be a separating set. Then $|S| \geq 5n-10$.
\end{thm}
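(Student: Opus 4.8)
The plan is to run exactly the same machinery that was developed for the conjugation action, but now for the left–right action of $H = \SL_2 \times \SL_2$ on $\M_2^n$. The key input is Proposition~\ref{grothbound}: since $H$ has no nontrivial characters (each $\SL_2$ factor has none), if I can show that $\mathcal{S}_{H,\M_2^n}$ has all its irreducible components meeting nontrivially and fails to be connected in some codimension $k$, then every separating set has size at least $\dim(\C[\M_2^n]^H) + k$. So the two quantities I must pin down are (a) $\dim(\C[\M_2^n]^H)$ and (b) the connectivity codimension $k$, which in turn requires decomposing the separating variety into components and computing the dimension of the pairwise intersection, just as in Section~3.

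First I would compute $\dim(\C[\M_2^n]^H)$ via Corollary~\ref{dimensionofinvariantring}: $\dim \M_2^n = 4n$, $\dim H = 6$, and I need the minimal stabiliser dimension. For a generic $n$-tuple (say $A_1 = I$, which reduces the left–right action to simultaneous conjugation, together with generic further matrices) the stabiliser is finite, so $\dim(\C[\M_2^n]^H) = 4n - 6$ for $n$ large enough. Next, the orbit-closure picture: a pair $(\bA,\bA')$ lies in $\mathcal{S}_{H,\M_2^n}$ iff $\overline{H\bA}\cap\overline{H\bA'}\neq\emptyset$. One component is $\overline{\Gamma_{H,\M_2^n}}$, of dimension $4n + 6 - (\text{min stab dim}) = 4n+6$ for large $n$, i.e. codimension $4n-6$. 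The non-closed orbits should again be governed by a ``triangularisability'' locus: an $n$-matrix can be simultaneously conjugated on left and right into upper-triangular form precisely on a closed $H$-stable subvariety (the left–right analogue of $G\cdot\ww^n$), and off that locus orbits are closed by a Hilbert–Mumford argument mirroring Lemma~\ref{orbits}. So I expect $\mathcal{S}_{H,\M_2^n} = \overline{\Gamma_{H,\M_2^n}} \cup \overline{(H\times H)\cdot\mathcal{C}}$ for an appropriate ``correlation'' subvariety $\mathcal{C}$ built from upper-triangular pairs whose diagonal data agree (up to the sign ambiguity forced by matching determinants), exactly as in \eqref{decompsgvagain}.

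The main work — and the main obstacle — is computing $\dim\bigl(\overline{\Gamma_{H,\M_2^n}}\cap\overline{(H\times H)\cdot\mathcal{C}}\bigr)$. Here I would again use the commuting $\GL_n(\C)$-action (Lemmas~\ref{reducegamma}, \ref{reducesepvar}, \ref{reducecols}) to column-reduce pairs in $\mathcal{C}$, assemble the relevant data into a $3\times n$ (or similarly sized) matrix $m_{\bA,\bA'}$, and show — by a limit/row-operation computation of the type in Proposition~\ref{intersect} and Lemma~\ref{closureggc} — that a pair in $\mathcal{C}$ lies in $\overline{\Gamma_{H,\M_2^n}}$ exactly when this matrix has rank $\leq 2$. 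The intersection is then (up to $\overline{(H\times H)\cdot\widehat{\mathcal{C}'}}$) the orbit of the rank-$\leq 2$ locus $\widehat{\mathcal{C}}$, whose dimension is $\dim\widehat{\mathcal{C}} + \dim H - \dim(\text{stabiliser of }\mathcal{C})$; the rank-$\leq 2$ variety of $3\times n$ matrices has dimension $2n+2$. Tracking the stabiliser dimensions carefully should give $\dim(\text{intersection}) = 2n + c$ for an explicit small constant $c$.

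Finally I assemble the bound. With $\dim\mathcal{S}_{H,\M_2^n} = 4n+6$ (the graph component dominating) and intersection dimension $2n+c$, the separating variety is connected in codimension $\dim\mathcal{S} - \dim(\text{intersection}) - 1$ but not in the next codimension down, giving $k = (4n+6) - (2n+c) - 1 = 2n - c - 1$ roughly; Proposition~\ref{grothbound} then yields $|S| \geq (4n-6) + k$. Choosing constants so that this equals $5n-10$ is the target, i.e. I expect $k = n - 4$ and correspondingly the intersection to have dimension $3n+9$ (codimension $5n-9$), so the lower bound $5n-10 = (4n-6)+(n-4)$ drops out. I would double-check the small-$n$ edge cases where the generic stabiliser is not finite or where $\widehat{\mathcal{C}} = \mathcal{C}$ (forcing a single component, as happened for $n=2$ in the conjugation case), and state the theorem for the range of $n$ where the two-component picture genuinely holds; the computation of the intersection dimension, with all the $\GL_n$-reduction bookkeeping and the sign ambiguity from $\det(A_i)=\det(A_i')$, is where essentially all the difficulty lies.
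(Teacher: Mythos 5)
Your proposal is a program, not a proof: you outline the steps to redo the whole separating-variety analysis for the $H$-action, but you explicitly defer the crucial computation (``the computation of the intersection dimension\ldots is where essentially all the difficulty lies'') and then reverse-engineer the required constants from the target $5n-10$. Worse, your two guesses for that intersection dimension are mutually inconsistent — you first predict $2n + c$ for a small constant $c$, and later predict $3n+9$ — which signals that you have not actually pinned down what the analogue of $\widehat{\cc}$ should look like for the left–right action. The structural aspects are also genuinely different: $\M_2$ does not decompose as trace-zero plus scalars in an $H$-compatible way, the matching constraints on upper-triangular representatives coming from $\det(X_i)$ and $\langle X_i \mid X_j\rangle$ are not the clean $b_i = \pm b'_i$ of the conjugation case, and the stabiliser bookkeeping for $\overline{(H\times H)\cdot\cc}$ has not been done. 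In fact the author remarks at the end of Section~5 that a direct computation of $\mathcal{S}_{H,\M_2^n}$ is deferred to a later paper precisely because it should yield a \emph{sharper} bound than $5n-10$, so a correct execution of your program would probably not land on $5n-10$ at all.

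The paper instead proves Theorem~\ref{semiinv} by a short transfer argument that you missed. The map $\sigma\colon \M_2^{n-1}\to\M_2^n$, $\bA\mapsto(\bA,I)$, induces a ring homomorphism $\sigma^*\colon \C[\M_2^n]^H \to \C[\M_2^{n-1}]^G$ (restriction of an $H$-invariant to the slice where the last matrix is $I$ is automatically $G$-invariant under simultaneous conjugation), and by \cite[Proposition~4.1]{DomokosRelative} this map is \emph{surjective}. It then follows from \cite[Corollary~6.3]{DomokosSemi} that $\sigma^*$ takes separating sets to separating sets. Hence if $S\subseteq\C[\M_2^n]^H$ is separating, $\sigma^*(S)\subseteq\C[\M_2^{n-1}]^G$ is a separating set for the conjugation action on $(n-1)$-tuples, and Theorem~\ref{mainthm} gives
\[
|S| \;\ge\; |\sigma^*(S)| \;\ge\; 5(n-1) - 5 \;=\; 5n - 10.
\]
This sidesteps all of the separating-variety geometry for $H$. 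If you want to salvage your approach, you would need to (a) carefully describe the $H$-analogue of $G\cdot\ww^n$ and the correlation variety $\cc$, (b) actually compute the dimension of the closure of the $(H\times H)$-orbit of the relevant rank-degenerate locus rather than guessing it, and (c) be prepared for the answer to give a strictly stronger bound than $5n-10$, which would then be a new result, not merely this theorem.
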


The dimension of $\C[\M^n_2]^H$ for $n\geq 3$ is $\dim(\M_2^n) - \dim(H) = 4n-6$. This follows from Proposition \ref{dimensionofinvariantring} because there exist $3$-matrices whose stabiliser in $H$ is the finite group $\pm I$. Contrastingly, $\dim (\C[\M_2^2]^H) = 8-6+1=3$, since every $2$-matrix has at least a 1-dimensional stabiliser, and $\dim(\C[\M_2]^H) = 4-6+3=1$ since the stabiliser of any matrix has dimension at least 3.
 So, our result implies that for $n \geq 5$ there does not exist a polynomial separating set for the action of $H$ on $\M_2^n$. 

The proof is a straightforward application of a result of Domokos: for $n \geq 1$ consider the morphism $\sigma: \M_2^n \rightarrow \M_2^{n+1}$ given by
\[\sigma (A_1,A_2, \ldots, A_n) =  (A_1,A_2, \ldots, A_n,I)\] where $I$ is the $2 \times 2$ identity matrix. By \cite[Proposition~4.1]{DomokosRelative}, the induced morphism
\[\sigma^*: \C[M_2^{n+1}]^H \rightarrow \C[\M_2^n]^G\] is surjective (the reference has $G = \GL_2$ but the algebras of invariants are the same). This can be used to show that for any separating set $S \subseteq \C[M_2^{n+1}]^H$, $\sigma^*(S) \subseteq \C[\M_2^n]^G$ is a separating set, see \cite[Corollary~6.3]{DomokosSemi}. Now Theorem \ref{semiinv} follows immediately from this observation and Theorem \ref{mainthm}.

The table below compares this lower bound with the size of the separating set given in this section.

\begin{center}
\begin{table}[h]
\begin{tabular}{c|cccc}
$n$ & $\dim(\C[\M_2^n]^H)$ & $|S_n| $& Lower bound \\ \hline
2 & 3 & 3 &3 \\
3 & 6 & 6&6\\
4 & 10 & 11& 10\\
5 & 14 & 20& 15\\
6 & 18 & 36 & 20 
\end{tabular}
\end{table}
\end{center}

Note that in the case $n=4$ we cannot rule out the existence of a polynomial separating set with these methods. In a forthcoming paper we plan to compute the separating variety for this action directly and thereby sharpen these lower bounds.

\bibliographystyle{plain}
\bibliography{MyBib}

\end{document}